\documentclass[11pt]{amsart}

\usepackage{latexsym, amssymb, amsthm,color}
\usepackage[centertags]{amsmath}
\usepackage{pictexwd}
\usepackage[normalem]{ulem}
\usepackage{hyperref}



\pagestyle{headings}

\textwidth16cm
\hoffset-2cm

\definecolor{GREEN}{rgb}{0,1,0}
\definecolor{green4}{rgb}{.1,.5,.1}
\definecolor{blue}{rgb}{0,0,1}
\definecolor{gray}{rgb}{0.5,0.5,0.5}

\newcommand{\mathrmtiny}[1]{\mathrm{\scriptscriptstyle #1}}
 \newcommand{\ep}{\end{proof}}

 \newif\ifpctex

\newtheorem{theorem}{Theorem}[section]
\newtheorem{definition}{Definition}[section]

\newtheorem{cond}[definition]{Condition}
\newtheorem{proposition}[definition]{Proposition}
\newtheorem{lemma}[definition]{Lemma}

\newtheorem{corollary}[definition]{Corollary}
\newcommand{\beCond}[2]{\Rand{\vspace{0,6cm}\tt #1}\begin{cond}[#2]
\label{#1}} \theoremstyle{definition}
\newtheorem{remark}[definition]{Remark}
\newtheorem{example}[definition]{Example}
\numberwithin{equation}{section}
\newtheoremstyle{step}{3pt}{0pt}{\itshape}{}{\bf}{}{.5em}{}

\theoremstyle{step} \newtheorem{step}{Step}


\newcommand{\Rand}[1]{\marginpar{#1}}
\marginparwidth1.5cm
\newcommand{\be}[1]{\begin{equation}\label{#1}}
\newcommand{\ee}{\end{equation}}
\newcommand{\bew}[1]{\Rand{\vspace{0,6cm}\tt #1}\begin{equation*}\label{#1}}
\newcommand{\eew}{\end{equation*}}
\newcommand{\bea}[1]{\Rand{\vspace{0,6cm}\tt #1}\begin{eqnarray*}\label{#1}}
\newcommand{\eea}[1]{\end{eqnarray*}}

\newcommand{\beL}[2]{\Rand{\vspace{0,6cm}\tt #1}\begin{lemma}[#2]\label{#1}}
\newcommand{\beD}[2]{\Rand{\vspace{0,6cm}\tt #1}\begin{definition}[#2]\label{#1}}
\newcommand{\beT}[2]{\Rand{\vspace{0,6cm}\tt #1}\begin{theorem}[#2]\label{#1}}
\newcommand{\beP}[2]{\Rand{\vspace{0,6cm}\tt #1}\begin{proposition}[#2]\label{#1}}
\newcommand{\beC}[1]{\Rand{\vspace{0,6cm}\tt #1}\begin{corollary}\label{#1}}
\newcommand{\beR}[1]{\Rand{\vspace{0,6cm}\tt #1}\begin{remark}[#1]\label{#1}}


\newcommand{\Tto}{{_{\displaystyle\Longrightarrow\atop t\to\infty}}}

\newcommand{\tno}{{_{\displaystyle\longrightarrow\atop n\to\infty}}}
\newcommand{\tko}{{_{\displaystyle\longrightarrow\atop k\to\infty}}}
\newcommand{\TNo}{{_{\displaystyle\Longrightarrow\atop N\to\infty}}}
\newcommand{\tNo}{{_{\displaystyle\longrightarrow\atop N\to\infty}}}

\DeclareMathAlphabet{\mathpzc}{OT1}{pzc}{m}{it}


\begin{document}

\title[The grapheme-valued Wright-Fisher diffusion with mutation]
{{\large The grapheme-valued Wright-Fisher diffusion\\ with mutation}}

\author{Andreas Greven}
\address{Andreas Greven\\
Department Mathematik, Universit\"at Erlangen-N\"urnberg, Cauerstrasse 11, 91058 Erlangen, Germany}
\email{greven@mi.uni-erlangen.de}

\author{Frank den Hollander}
\address{Frank den Hollander\\
Mathematisch Instituut, Universiteit Leiden, Niels Bohrweg 1, 2333 CA  Leiden, The Netherlands}
\email{denholla@math.leidenuniv.nl}

\author{Anton Klimovsky}
\address{Anton Klimovsky\\
Institut f\"ur Mathematik, Emil-Fischer-Strasse 30, 97074 W\"urzburg, Germany}
\email{anton.klymovskiy@mathematik.uni-wuerzburg.de}

\author{Anita Winter}
\address{Anita Winter \\
Fakult\"at f\"ur Mathematik, Universit\"at Duisburg-Essen, Campus Essen, Universit\"atsstra{\ss}e 2, 45132 Essen, Germany}
\email{anita.winter@uni-due.de}

\thanks{AG was supported by the Deutsche Forschungsgemeinschaft (through grant DFG-GR 876/16-2 of SPP-1590). FdH was supported by the Netherlands Organisation for Scientic Research (through NWO Gravitation Grant NETWORKS-024.002.003), and by the Alexander von Humboldt Foundation. AK was supported by the Deutsche Forschungsgemeinschaft (through Project-ID 443891315 within SPP 2265 and through Project-ID 412848929). AW was supported by the Deutsche Forschungsgemeinschaft (through Project-ID 444091549 within SPP-2265).}

\thispagestyle{empty}
\date{\today}

\keywords{Population genetics, Resampling with mutation, Graph-valued Markov processes, Graphons, Graphemes, Metric measure spaces, Subgraph counts, Sampling convergence, Adjacency matrix, Duality, Martingale Problem}

\subjclass[2000]{05C80, 60J68, 60J70, 92D25}

\begin{abstract}
In \cite{AthreyadenHollanderRoellin2021} models from population genetics were used to define stochastic dynamics in the space of graphons arising as continuum limits of dense graphs. In the present paper we exhibit an example of a simple neutral population genetics model for which this dynamics is a Markovian diffusion that can be characterised as the solution of a martingale problem. In particular, we consider a Markov chain in the space of finite graphs that resembles a Moran model with resampling and mutation. We encode the finite graphs as graphemes, which can be represented as a triple consisting of a vertex set, an adjacency matrix and a sampling measure. We equip the space of graphons with convergence of sample subgraph densities and show that the grapheme-valued Markov chain converges to a grapheme-valued diffusion as the number of vertices goes to infinity. We show that the grapheme-valued diffusion has a stationary distribution that is linked to the Poisson-Dirichlet distribution. In a companion paper \cite{GrevendenHollanderKlimovskyWinter2023} we build up a general theory for obtaining grapheme-valued diffusions via genealogies of models in population genetics.
\end{abstract}

\maketitle

{\tiny \tableofcontents}

\newpage


\section{Introduction and background}
\label{S:introduction}

In \cite{AthreyadenHollanderRoellin2021}, stochastic processes with values in the space of \emph{graphons} where constructed based on dynamics from population genetics. Even though this paper provided a large class of examples, intended as a \emph{proof of concept}, no general theory was developed and consequently many structural questions remained open. For instance, it was not clear whether or not the examples that were constructed are strong Markov processes that can be described by a generator acting on a dense class of test functions on the graphon space.
Moreover, all the examples had trivial equilibria concentrated on constant graphons.

The goal of the present paper is to give an example, based on \emph{resampling} and \emph{mutation}, to construct a \emph{graphon-valued diffusion} via a \emph{well-posed martingale problem}. To do so, we equip the space of graphons with convergence of subgraph densities, which can be viewed as a \emph{sample adjacency distribution}. This follows approaches of sample convergence of distance matrices (\cite{GrevenPfaffelhuberWinter2009}, \cite{LoehrVoisinWinter2015}) and shapes (\cite{LoehrMytnikWinter2020}, \cite{LoehrWinter2021}) used in the context of random trees.

We start with finite Markov chains (graph-valued, with values in the space of adjacency matrices) and provide a \emph{duality relation} for a finite-graph-valued Markovian dynamics that can be lifted to a limiting graphon-valued Markovian dynamics. We show that the limiting dynamics takes place in the subspace of \emph{graphemes}, which are countable graphs embedded in a Polish space that can be viewed as graphs with an adjacency structure. We use a criterion to decide whether or not a graphon is a grapheme in terms of the \emph{entropy of the sample adjacency distribution}, and apply this criterion to come up with a criterion for relative compactness. Since the finite-graph-valued dynamics moves through the space of finite graphs with \emph{complete components}, we introduce the subspace of complete component graphons and show that it is a closed subspace of the space of graphons. The limiting diffusion is introduced by means of a martingale problem. We consider so-called sample adjacency polynomials, which describe the subtree count densities, and use these as test functions to write down a well-posed martingale problem. We verify that the limit of the stationary distribution of the finite-graph-valued dynamics is the stationary distribution of the graphon-valued dynamics.

In a companion paper \cite{GrevendenHollanderKlimovskyWinter2023}, we build up a general theory for obtaining grapheme-valued diffusions via \emph{genealogies} of models in population genetics. Like in \cite{AthreyadenHollanderRoellin2021}, we exploit the machinery developed for models from population dynamics, but we add the idea to also exploit the tree structure behind the genealogy of populations, which is both natural and convenient. The specific model analysed in the present paper serves as an illustration.

The rest of the paper is structured as follows. In Section~\ref{S:finite}, we introduce the poaching model with self-employment, which is our finite-graph-valued Markov chain with a fixed vertex set. To be able to evaluate this dynamics with test functions inducing our notion of convergence, we translate it to a Markov chain taking values in the adjacency matrices. In order to establish the stationary distribution, we further relate our model to a Moran model with mutation. In Section~\ref{S:graphemes}, we recall the notion of a graphon, provide examples, and discuss several topological issues that we will further use. In particular, we introduce the subspace of graphemes and graphs with complete graph components. In Section~\ref{S:WFgrapheme}, we formulate the martingale problem of the diffusion limit as the number of vertices goes to infinity and study its properties.


\section{Finite-state Markov chains}
\label{S:finite}

In this section, we consider three closely related Markov chains. In Subsection~\ref{Sub:Ngraph} we consider a Markov chain on the space of finite graphs with a fixed number of vertices. In Subsection~\ref{Sub:adjacency}, we consider the functional of this Markov chain that observes the dynamics of the adjacency matrix. In Subsection~\ref{Sub:infiniteAllele}, we link the two Markov chains to the Moran model with mutation.


\subsection{The Markov chain with values in the space of finite graphs}
\label{Sub:Ngraph}

By an $N$-graph, we mean an undirected labelled graph $G=(V,E)$ with $\# V=N$ vertices, labelled by the index set $[N] := \{1, 2, \ldots, N\}$. We write
\begin{equation}
\label{f:178}
\mathbb{G}^N := \text{ the space of $N$-graphs.}
\end{equation}
We introduce the $N$-graph {\em poaching model with self-employment}, i.e., the following continuous-time Markov chain $X=(X_t)_{t \ge 0}$ on the space $\mathbb{G}^N$ of $N$-graphs:
\begin{itemize}
\item
{\bf Poaching.} For each pair $(v,v')\in V^2$, at rate $1$, the vertex $v$ forces the vertex $v'$ to give up all its connections and instead reconnect with the vertex $v$ as well as with all its adjacent vertices, i.e., with all $v''\in V$ such that $\{v,v''\}\in E$.
\item
{\bf Self-employment.}
For each vertex $v\in V$, at rate $\mu\ge 0$, the vertex $v$ isolates itself by disconnecting from all other vertices.
\end{itemize}
For this Markov chain, the infinitesimal generator $\Omega^N_{\mathrmtiny{ graph}}$ acts on a function $f\colon\,\mathbb{G}^N\to\mathbb{R}$ as follows: for all $G=(V,E)$,
\begin{equation}
\label{f:001}
(\Omega^N_{\mathrmtiny{ graph}}f)(G) = \sum_{v_1\not =v_2\in V} \big[f(G^{(v_1,v_2)})-f(G)\big]
+ \mu\sum_{v\in V} \big[f(G^{v})-f(G)\big],
\end{equation}
where $G^{(v_1,v_2)}$ denotes the $N$-graph $G$ after vertex $v_2$ was poached by vertex $v_1$, and $G^{v}$ denotes the $N$-graph $G$ after the vertex $v$ became self-employed.

Denote by $\mathbb{G}^N_{\mathrmtiny{ complete}}$ the subspace of $\mathbb{G}^N$ consisting of graphs whose connected components are all complete, i.e.,
\begin{equation}
\label{f:179}
\begin{aligned}
\mathbb{G}^N_{\mathrmtiny{ complete}}
:= \big\{G=(V,E)\in\mathbb{G}^N\colon\,
&\{v_2,v_3\}\in E\text{ if } \{v_1,v_2\},\{v_1,v_3\} \in E,\\
&\{v_2,v_3\}\not\in E\text{ if } \{v_1,v_2\},\{v_1,v_3\} \not\in E\big\}.
\end{aligned}
\end{equation}
Note that any $G\not\in \mathbb{G}^N_{\mathrmtiny{ complete}}$ is a transient state of $X$. Moreover, on $\mathbb{G}^N_{\mathrmtiny{ complete}}$ the Markov chain is irreducible. Therefore, the Markov chain converges, for all initial $N$-graphs, to the unique stationary distribution $\pi^N_{\mathrmtiny{ graph}}$ that is supported on $\mathbb{G}^N_{\mathrmtiny{ complete}}$.

To describe $\pi^N_{\mathrmtiny{ graph}}$, for each $G=(V,E)\in\mathbb{G}^N$, let two vertices fall into the same connected component if and only if their labels belong to the same partition element. Define
\begin{equation}
\label{f:180}
\mathrm{Comp}(G) := \text{ set of connected components of $G$,}
\end{equation}
and consider the map $g\colon\,\mathbb{G}^N\to{\mathcal N}_N(\mathbb{N})$ given by
\begin{equation}
\label{f:022}
g(G) := \sum_{\varpi\in\mathrm{Comp}(G)} \delta_{\#\varpi},
\end{equation}
where ${\mathcal N}^N(\mathbb{N})$ is the set of integer-valued measures $\nu$ on $\mathbb{N}$ with $\sum_{k\in\mathbb{N}}k\nu(k)=N$.

\begin{proposition}[Stationary distribution and ergodicity]
For $N\in\mathbb{N}$ and $G\in\mathbb{G}^N$, define
\begin{equation}
\label{f:189}
\pi^N_{\mathrmtiny{ graph}}(G) := \prod_{j=1}^N \frac{\mu^{g(G)(j)}}{(\mu+(j-1))}
\,\mathbf{1}_{\mathbb{G}^N_{\mathrmtiny{ graph}}}(G).
\end{equation}
Then, for all initial states $G\in\mathbb{G}^N$,
\begin{equation}
\label{f:010e}
{\mathcal L}_G(X^N_t) \Tto \pi^N_{\mathrmtiny{ graph}}.
\end{equation}
\label{P:012}
\end{proposition}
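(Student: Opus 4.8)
The plan is to use that $X=X^N$ takes values in the \emph{finite} space $\mathbb{G}^N$, so that the ergodic statement \eqref{f:010e} reduces to classical finite-state Markov chain theory once the stationary distribution has been identified; it then remains to verify that \eqref{f:189} is that distribution, which I would do either by a direct detailed-balance computation or by transporting the Ewens sampling formula through the correspondence with the Moran model of Subsection~\ref{Sub:infiniteAllele}.

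First I would set up the recurrence picture. The set $\mathbb{G}^N_{\mathrmtiny{ complete}}$ is closed under the dynamics \eqref{f:001}: after poaching, $v_2$ is connected precisely to $v_1$ and to the neighbours of $v_1$, so the component of $v_1$ absorbs $v_2$ and stays complete while the component $v_2$ leaves merely loses a vertex; self-employment only splits off a singleton. Conversely, from any $G\in\mathbb{G}^N$ one reaches $\mathbb{G}^N_{\mathrmtiny{ complete}}$ with positive probability by letting every vertex self-employ, so every $G\notin\mathbb{G}^N_{\mathrmtiny{ complete}}$ is transient, as already noted after \eqref{f:179}. On $\mathbb{G}^N_{\mathrmtiny{ complete}}$ the chain is irreducible: successive self-employments lead from any state to the totally disconnected graph, and from the totally disconnected graph one reaches an arbitrary $G\in\mathbb{G}^N_{\mathrmtiny{ complete}}$ by poachings that build up each connected component one vertex at a time. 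Since $\mathbb{G}^N$ is finite, there is a unique stationary distribution, it is supported on $\mathbb{G}^N_{\mathrmtiny{ complete}}$, and $\mathcal{L}_G(X^N_t)$ converges to it from every initial $G$; so only the identification with \eqref{f:189} remains.

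For that I would identify each $G\in\mathbb{G}^N_{\mathrmtiny{ complete}}$ with its partition $\mathrm{Comp}(G)$ of $[N]$ and verify detailed balance. The generator \eqref{f:001} connects two such states only by moving one vertex $w$ from its block into another block, or by splitting a singleton off a block, and the relevant rates are: rate $b$ to move $w$ into an existing block of size $b$ (one for each ordered pair $(v,w)$ with $v$ in that block); rate $\mu$ to turn $w$, lying in a block of size $\ge 2$, into a fresh singleton; and rate $a-1$ to reabsorb a singleton $\{w\}$ into a block of size $a-1$. Substituting \eqref{f:189} into $\pi^N_{\mathrmtiny{ graph}}(G)\,q(G\to G')=\pi^N_{\mathrmtiny{ graph}}(G')\,q(G'\to G)$ for these two move types, and carrying through the combinatorial prefactor $\prod_{\varpi\in\mathrm{Comp}(G)}(\#\varpi-1)!$ (so that the right-hand side of \eqref{f:189} becomes the Ewens sampling formula written per labelled graph), a poaching move produces the ratio $(a-1)!\,(b-1)!/[(a-2)!\,b!]=(a-1)/b$ and a self-employment move $(a-1)!/[\mu\,(a-2)!]=(a-1)/\mu$, which match the rate ratios above. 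Reversibility together with $\sum_G\pi^N_{\mathrmtiny{ graph}}(G)=1$ then pins down \eqref{f:189} as the stationary distribution; the normalisation is the classical identity $\sum_{\sigma\in S_N}\mu^{\#\{\text{cycles of }\sigma\}}=\mu(\mu+1)\cdots(\mu+N-1)$, which one may cite or prove by induction on $N$.

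An equivalent route bypasses the explicit computation: by Subsection~\ref{Sub:infiniteAllele}, under $G\mapsto\mathrm{Comp}(G)$ the chain on $\mathbb{G}^N_{\mathrmtiny{ complete}}$ is precisely the allelic-partition process of a Moran model with per-individual mutation rate $\mu$ and per-ordered-pair resampling rate $1$, whose equilibrium is the Ewens sampling formula, and rereading that formula for a single labelled graph yields \eqref{f:189}. In either approach the only genuinely delicate point is the combinatorial bookkeeping: getting the transition rates right in the degenerate cases (blocks of size $1$ or $2$, or the poacher lying in the poached vertex's own block so that nothing changes) and keeping track of the $\prod_{\varpi}(\#\varpi-1)!$ prefactor, which is exactly what renders the right-hand side of \eqref{f:189} a probability measure; everything else is routine.
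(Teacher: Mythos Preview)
Your proposal is correct, and the detailed-balance verification you sketch goes through: on $\mathbb{G}^N_{\mathrmtiny{complete}}$ the chain is reversible with respect to the Ewens sampling formula on labelled partitions, and checking $\pi(G)q(G,G')=\pi(G')q(G',G)$ for the two elementary moves (transfer of one vertex between blocks; singleton split/merge) is a short computation, exactly as you indicate. You are also right that \eqref{f:189} as printed is missing the factor $\prod_{\varpi\in\mathrm{Comp}(G)}(\#\varpi-1)!=\prod_{j}((j-1)!)^{g(G)(j)}$; without it the weights do not sum to $1$ (already for $N=3$), and your parenthetical remark effectively repairs this.

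The paper takes a different and considerably longer route. It does not verify reversibility on labelled graphs at all; instead it passes to the frequency-spectrum process $Z_t=h(Y_t)$ of the Moran model (Lemma~\ref{L:003}), checks the \emph{global} balance equations \eqref{f:120} for $\pi^N_{\mathrmtiny{MED}}$ by an explicit and somewhat intricate calculation (Lemma~\ref{L:001}), then couples the poaching model to the Moran model (Proposition~\ref{P:011}) to transport stationarity back. Your approach is more elementary, yields the extra information that the labelled-partition chain is reversible, and avoids the case analysis of \eqref{f:014}--\eqref{f:018}. The paper's approach, on the other hand, is not wasted effort: the Moran coupling and the frequency-spectrum picture are reused in Section~\ref{S:WFgrapheme} for the diffusion limit and the identification of the GEM equilibrium, so that machinery had to be built anyway. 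Your second suggested route (transport Ewens through the Moran correspondence) is essentially what the paper does, minus the observation that detailed balance would have shortcut Lemma~\ref{L:001}.
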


Later on, we declare two $N$-graphs $G=(V,E)$ and $G'=(V',E')$ to be equivalent if and only if they are isomorphic to each other, i.e., if there is a one-to-one map $\gamma\colon\,V \to V'$ such that $\{v,w\}\in E$ if and only if  $\{\gamma(v),\gamma(W)\} \in E'$. If $G=(V,E)$ and $G'=(V',E')$ are equivalent, we write $G\sim G'$, and let
\begin{equation}
\label{f:187}
\big[G=(V,E)\big] := \big\{G'=(V',E') \text{ with } G\sim G'\big\}
\end{equation}
be the isomorphism class of $G$, and
\begin{equation}
\label{f:190}
\mathbb{G}^{N,\simeq} := \big\{[G];\,G\in\mathbb{G}^N\big\}.
\end{equation}
Obviously, if $G\in\mathbb{G}^N_{\mathrmtiny{ complete}}$, then $G'\in[G]$ if and only of $g(G)=g(G')$. Moreover, if $G\in\mathbb{G}^N_{\mathrmtiny{ complete}}$, then
\begin{equation}
\label{f:188}
\#\big[G\big] = \prod_{j=1}^N \Big(\big(j!\big)^{g(G)(j)-1}\big(g(G)(j)\big)!\Big)^{-1}.
\end{equation}

\begin{definition}[Multivariate Ewens Distribution $N$-graph]
{\rm Fix $N\in\mathbb{N}$. We refer to a random element $G^N_{\mathrmtiny{ MED}}$ in $\mathbb{G}^N_{\mathrmtiny{ complete}}$ as the \textit{multivariate Ewens distribution (MED) $N$-graph} if
\begin{equation}
\label{f:177}
\begin{aligned}
\mathbb{P}\big(g(G^N_{\mathrmtiny{ MED}})=\nu\big)=\pi^N_{\mathrmtiny{ MED}}\big(\nu\big),
\end{aligned}
\end{equation}
where $\pi^N_{\mathrmtiny{ MED}}$ denotes the multivariate Ewens distribution on ${\mathcal N}^N(\mathbb{N})$ given by
\begin{equation}
\label{f:010a}
\pi^N_{\mathrmtiny{ MED}}\big(\nu\big) = \frac{N!}{\mu(\mu+1) \cdots (\mu+N-1)} \prod_{j=1}^N \frac{\mu^{\nu(j)}}{j^{\nu(j)}(\nu(j))!}.
\end{equation}
\label{Def:001}
} \hfill$\spadesuit$
\end{definition}

The MED was introduced in \cite{Ewens1972} (compare also with \cite{DonnellyTavari1986}), and gives the probability of the partition of a sample of $N$ selectively equivalent genes into a number of different gene types (alleles), either exactly in some models of genetic evolution or as a limiting distribution (as the population size tends to infinity) in others. It was discovered independently in \cite{Antoniak1974} in the context of Bayesian statistics. See \cite[Chapter~41]{JohnsonKotzBalakrishnan1997} for a survey on the MED.

We claim that MED-$N$-graphs are the unique stationary random graphs for the poaching model with self-employment lifted to $\mathbb{G}^{N,\simeq}_{\mathrmtiny{}}$.

\begin{proposition}[Ergodicity]
Let $G^N_{\mathrmtiny{ MED}}$ be the MED $N$-graph, and $X^N=(X^N_t)_{t \ge 0}$ be the $\mathbb{G}^N$-valued Markov chain with poaching and self-employment. Then, for all initial states $G\in\mathbb{G}^N$,
\begin{equation}
\label{f:010e*}
{\mathcal L}_G\big(\big[X^N_t\big]\big)\Tto{\mathcal L}\big(\big[G^N_{\mathrmtiny{ MED}}\big]\big).
\end{equation}
\label{P:001}
\end{proposition}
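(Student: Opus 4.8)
The plan is to deduce Proposition~\ref{P:001} from Proposition~\ref{P:012} by pushing the convergence in \eqref{f:010e} forward under the quotient map $G\mapsto[G]$ and then re-expressing the image of $\pi^N_{\mathrmtiny{graph}}$ in terms of the multivariate Ewens distribution. Concretely, let $q\colon\mathbb{G}^N\to\mathbb{G}^{N,\simeq}$ denote the map sending a graph to its isomorphism class. Since $\mathbb{G}^{N,\simeq}$ is a finite set, $q$ is trivially continuous, and by the continuous mapping theorem \eqref{f:010e} gives ${\mathcal L}_G([X^N_t]) = q_*{\mathcal L}_G(X^N_t)\Rightarrow q_*\pi^N_{\mathrmtiny{graph}}$ as $t\to\infty$. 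So the entire content of the proposition is the identification $q_*\pi^N_{\mathrmtiny{graph}} = {\mathcal L}([G^N_{\mathrmtiny{MED}}])$, i.e. that for each isomorphism class $[G]$ with $G\in\mathbb{G}^N_{\mathrmtiny{complete}}$ one has $\pi^N_{\mathrmtiny{graph}}([G]) = \pi^N_{\mathrmtiny{MED}}(g(G))$, where on the left I sum $\pi^N_{\mathrmtiny{graph}}$ over the $\#[G]$ labelled graphs in the class.

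The computation then has two ingredients, both already essentially supplied in the excerpt. First, from \eqref{f:189}, every labelled graph $G'$ in a fixed class has the same value $\pi^N_{\mathrmtiny{graph}}(G') = \prod_{j=1}^N \mu^{g(G)(j)}/(\mu+(j-1))$, because $\pi^N_{\mathrmtiny{graph}}$ depends on $G'$ only through $g(G')=g(G)$. Hence
\begin{equation}
\label{f:Pppp}
q_*\pi^N_{\mathrmtiny{graph}}([G]) \;=\; \#[G]\,\cdot\,\prod_{j=1}^N \frac{\mu^{g(G)(j)}}{\mu+(j-1)}.
\end{equation}
Second, I substitute the explicit count of the isomorphism class from \eqref{f:188}, namely $\#[G] = \prod_{j=1}^N\big((j!)^{g(G)(j)-1}(g(G)(j))!\big)^{-1}$, and rewrite $\prod_{j=1}^N\frac{1}{\mu+(j-1)} = \prod_{j=0}^{N-1}\frac{1}{\mu+j} = \frac{1}{\mu(\mu+1)\cdots(\mu+N-1)}$. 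With $\nu := g(G)$ and the constraint $\sum_j j\,\nu(j)=N$, one has $\prod_{j=1}^N (j!)^{-\nu(j)+1} = \Big(\prod_{j=1}^N j^{-\nu(j)}\Big)\cdot\Big(\prod_{j=1}^N ((j-1)!)^{-\nu(j)+1}\Big)$; after collecting the $j^{\nu(j)}$ factors and the surviving factorials, the product telescopes/collapses to exactly the combinatorial prefactor $N!$ appearing in \eqref{f:010a}. Thus the right-hand side of \eqref{f:Pppp} equals $\frac{N!}{\mu(\mu+1)\cdots(\mu+N-1)}\prod_{j=1}^N \frac{\mu^{\nu(j)}}{j^{\nu(j)}(\nu(j))!} = \pi^N_{\mathrmtiny{MED}}(\nu)$, which is \eqref{f:010a}. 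Combined with \eqref{f:177} this yields $q_*\pi^N_{\mathrmtiny{graph}} = {\mathcal L}([G^N_{\mathrmtiny{MED}}])$, and together with the continuous-mapping step above, \eqref{f:010e*} follows.

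The only genuinely nontrivial point is the bookkeeping in the factorial identity — verifying that $\#[G]$ times the $\prod(\mu+(j-1))^{-1}$ factor reproduces both the $N!$ and the $\prod j^{\nu(j)}(\nu(j))!$ denominators of the Ewens formula. This is the standard identity relating the number of labelled graphs with complete components of prescribed block sizes to the number of set partitions with those block sizes (each block of size $j$ contributing a factor $j!$ of internal relabellings that do not change the partition, whence the $(j!)^{\nu(j)-1}$ rather than $(j!)^{\nu(j)}$), so one can either carry it out directly or cite the classical correspondence between complete-component graphs and set partitions of $[N]$, under which $\pi^N_{\mathrmtiny{graph}}$ pulls back precisely to the Ewens sampling formula on partitions (see \cite{Ewens1972}). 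I expect this to be the one place where a referee would want the algebra spelled out; everything else is formal.
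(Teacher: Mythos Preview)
Your reduction is logically sound: granted Proposition~\ref{P:012}, the push-forward under $G\mapsto[G]$ together with the combinatorial identity between \eqref{f:189}, \eqref{f:188} and \eqref{f:010a} does yield \eqref{f:010e*}. This is, however, a genuinely different route from the paper's.

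The paper does \emph{not} deduce Proposition~\ref{P:001} from Proposition~\ref{P:012}. Instead it constructs an explicit coupling (Proposition~\ref{P:011}) between the poaching-with-self-employment chain $X^N$ and the Moran model with mutation $Y^N$ such that $g(X^N_t)=h(Y^N_t)$ for all $t$, then proves directly (Lemma~\ref{L:001}, via a lengthy verification of the stationarity equations for the frequency-spectrum chain) that $\pi^N_{\mathrmtiny{MED}}$ is the unique stationary law of $h(Y^N)$, and concludes by irreducibility (Corollary~\ref{Cor:002}). In particular, the paper never separately proves Proposition~\ref{P:012}; the explicit form \eqref{f:189} of $\pi^N_{\mathrmtiny{graph}}$ is effectively a by-product of the same Moran-model argument. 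So your reduction, while valid, presupposes as input precisely the object that the paper's proof is designed to identify.

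What each approach buys: yours is short and purely algebraic once Proposition~\ref{P:012} is in hand, and it makes transparent that Proposition~\ref{P:001} is just Proposition~\ref{P:012} modulo relabelling. The paper's route is longer but self-contained and conceptual: it explains \emph{why} the Ewens formula appears, by exhibiting the graph dynamics as a relabelled Moran model; it also furnishes the coupling that is reused later for the diffusion limit. One caution on your side: the formulas \eqref{f:188} and \eqref{f:189} as printed do not combine cleanly to \eqref{f:010a} (try $N=3$, $\nu=\delta_3$), so the ``standard identity'' you invoke will in fact force you to correct one of them --- which is easy, but means the algebra cannot simply be cited away.
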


From Proposition~\ref{P:001}, we obtain  the following (see, for example, \cite{JohnsonKotzBalakrishnan1997}).

\begin{corollary}[Properties of the equilibrium graph]
Let $\{\xi_i,\,i\in\mathbb{N}\}$ be a family of independent Bernoulli random variables with $\mathbb{P}(\xi_i=1)=1-\mathbb{P}(\xi_i=0)=(1+\frac{i-1}{\mu})^{-1}$. Then,
\begin{equation}
\label{f:010c}
{\mathcal L}\big(\#\mathrm{Comp}(G^N_{\mathrmtiny{ MED}})\big) = {\mathcal L}\Big(\sum_{i=1}^N \xi_i\Big).
\end{equation}
\label{Cor:001}
\end{corollary}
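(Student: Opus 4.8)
The plan is to derive Corollary~\ref{Cor:001} directly from Proposition~\ref{P:001} together with the classical representation of the number of allele types under the multivariate Ewens distribution. By Proposition~\ref{P:001}, the law of $[X^N_t]$ converges to $\mathcal{L}([G^N_{\mathrmtiny{ MED}}])$, and the number of connected components $\#\mathrm{Comp}(\cdot)$ is a graph-isomorphism invariant, so it suffices to compute the law of $\#\mathrm{Comp}(G^N_{\mathrmtiny{ MED}})$. By \eqref{f:177}, this equals the law of the total number of parts $\sum_{j=1}^N \nu(j)$ when $\nu$ is drawn from $\pi^N_{\mathrmtiny{ MED}}$ on $\mathcal{N}^N(\mathbb{N})$.

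First I would recall the Feller coupling (or Ewens sampling formula) interpretation: if $\nu$ has law $\pi^N_{\mathrmtiny{ MED}}$ with parameter $\mu$, then the total number of blocks $K_N := \sum_{j=1}^N \nu(j)$ has the distribution of a sum of independent Bernoulli variables. Concretely, one checks that the probability generating function of $K_N$ under $\pi^N_{\mathrmtiny{ MED}}$ factorizes: using \eqref{f:010a},
\begin{equation}
\label{f:pgf}
\mathbb{E}\big[z^{K_N}\big]
= \frac{N!}{\mu(\mu+1)\cdots(\mu+N-1)}
\sum_{\nu} z^{\sum_j \nu(j)} \prod_{j=1}^N \frac{\mu^{\nu(j)}}{j^{\nu(j)}(\nu(j))!}
= \frac{(z\mu)(z\mu+1)\cdots(z\mu+N-1)}{\mu(\mu+1)\cdots(\mu+N-1)}
= \prod_{i=1}^N \frac{z\mu + (i-1)}{\mu+(i-1)}.
\end{equation}
The middle identity is the standard fact that $\sum_{\nu\in\mathcal{N}^N(\mathbb{N})} x^{\sum_j \nu(j)} \prod_{j=1}^N \frac{1}{j^{\nu(j)}(\nu(j))!} = \frac{1}{N!}\, x(x+1)\cdots(x+N-1)$, which follows by expanding the exponential generating function $\exp\big(x\sum_{j\ge 1} s^j/j\big) = (1-s)^{-x}$ and reading off the coefficient of $s^N$. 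Now factor the right-hand side of \eqref{f:pgf}: each factor $\frac{z\mu+(i-1)}{\mu+(i-1)}$ is the pgf of a Bernoulli variable $\xi_i$ with $\mathbb{P}(\xi_i=1) = \frac{\mu}{\mu+(i-1)} = (1+\frac{i-1}{\mu})^{-1}$, exactly as in the statement. Since the pgf of $\sum_{i=1}^N \xi_i$ (with the $\xi_i$ independent) equals this product, and a probability distribution on $\{0,1,\dots,N\}$ is determined by its pgf, we conclude $\mathcal{L}(K_N) = \mathcal{L}(\sum_{i=1}^N \xi_i)$.

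Finally, combining $\#\mathrm{Comp}(G^N_{\mathrmtiny{ MED}}) = \sum_{j=1}^N g(G^N_{\mathrmtiny{ MED}})(j)$ in distribution equal to $K_N$ with the identity just established gives \eqref{f:010c}. The argument is essentially bookkeeping, so there is no real obstacle; the only point requiring a moment's care is the generating-function identity $\exp(x\sum_j s^j/j) = (1-s)^{-x}$ and the correct extraction of the coefficient of $s^N$, but this is entirely classical (it is the content of \cite{JohnsonKotzBalakrishnan1997}, to which the statement already refers). One could alternatively cite the Feller coupling directly, but the self-contained pgf computation above seems cleaner to include.
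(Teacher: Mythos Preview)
Your argument is correct. The paper itself does not include a proof of this corollary at all: it simply states that the result follows from Proposition~\ref{P:001} and refers the reader to \cite{JohnsonKotzBalakrishnan1997}. Your self-contained derivation via the probability generating function, using the cycle-index identity obtained from the coefficient of $s^N$ in $(1-s)^{-x}=\exp\big(x\sum_{j\ge 1}s^j/j\big)$, is the standard route and is exactly what that reference contains; so you are supplying the details the paper omits rather than taking a genuinely different path.
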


The appearance of the MED is not by accident: the graph dynamics with poaching and self-employment is closely related to one of the simplest neutral stationary population models. We will introduce this model in Subsection~\ref{Sub:infiniteAllele}, where we also provide the proof of Proposition~\ref{P:001}.


\subsection{The Markov chain with values in the space of adjacency matrices}
\label{Sub:adjacency}

In this subsection, we introduce a second Markov chain, taking values in the space of adjacency matrices, that in some sense also describes the graph dynamics with poaching and self-employment.

\begin{definition}[Adjacency matrix]
{\rm For a finite subset $\Xi\subset\mathbb{N}$, we call a symmetric $\Xi\times \Xi$-matrix  $A=(A_{i,j})_{i,j\in \Xi}$ an \textit{adjacency matrix} on $\Xi$ if $A_{i,j}\in\{0,1\}$ and $A_{i,i}=0$ for all $i,j\in \Xi$. We let
\begin{equation}
\label{e:027}
\mathfrak{A}_\Xi := \text{ the space of all adjacency matrices on $\Xi$,}
\end{equation}
and
\begin{equation}
\label{e:027*}
\mathfrak{A}_N := \bigcup_{\Xi\subseteq[N]} \mathfrak{A}_\Xi
\end{equation}
the space of all adjacency matrices on a finite subset of $[N]$.} \hfill$\spadesuit$
\label{Def:005}
\end{definition}

For $N\in\mathbb{N}$ and $\Xi \subset [N]$, we are interested in the $\mathfrak{A}_\Xi$-valued Markov chain $M^\Xi=(M^\Xi_t)_{t \ge 0}$ that, given the current state $A\in\mathfrak{A}_\Xi$, has the following dynamics:
\begin{itemize}
\item
{\bf Duplication.}
For all $i\not=j\in \Xi$, at rate $1$ $A$ is replaced by $\sigma_{i,j}(A) \in \mathfrak{A}_\Xi$, where
\begin{equation}
\label{f:002}
\big(\sigma_{i,j}(A)\big)_{k,l} := 
\begin{cases}
A_{k,l}, & \text{if }j\not\in\{k,l\},\\
A_{i,l}+\delta_{i,l}, &\text{if } j=k, \\
A_{k,i}+\delta_{k,i}, &\text{if } j=l.
\end{cases}
\end{equation}
\item
{\bf Grounding.}
For all $i\in \Xi$, at rate $\mu\ge 0$ $A$ is replaced by $0_{i}(A) \in \mathfrak{A}_\Xi$, where
\begin{equation}
\label{f:003}
\big(0_{i}(A)\big)_{k,l} := 
\begin{cases}
A_{k,l}, &\text{if } i\not\in\{k,l\}, \\
0, &\text{if } i\in\{k,l\}.
\end{cases}
\end{equation}
\end{itemize}
In what follows, we will refer to this Markov chain as the {\em duplication model with grounding}. The infinitesimal generator $\Omega^\Xi_{\mathrmtiny{ adjacency}}$ of this Markov chain is the operator acting on a function $\phi\colon\,\mathfrak{A}_\Xi \to \mathbb{R}$ as follows: for all $A\in\mathfrak{A}_\Xi$,
\begin{equation}
\label{f:004}
(\Omega^\Xi_{\mathrmtiny{ adjacancy}}\phi)(A) = \sum_{i\not =j\in\Xi} \big[\phi(\sigma_{i,j}(A))-\phi(A)\big]
+ \mu \sum_{i\in\Xi} \big[\phi(0_i(A))-\phi(A)\big].
\end{equation}

For an $N$-graph $G=(V,E) \in \mathbb{G}^N$, a subset $\Xi \subseteq [N]$, and a vector $(x_1,\ldots,x_{\#\Xi}) \in V^{\Xi}$ with distinct entries, write
\begin{equation}
\label{f:124}
\mathrm{adj}\big((x_1,\ldots,x_{\#\Xi}) := \big(\mathbf{1}_E(\{x_i,x_j\}\big)_{i,j\in\Xi}
\end{equation}
for the {\em adjacency matrix on $\Xi$ generated by the vector $(x_1,\ldots,x_{\Xi})\in V^{\Xi}$}. Note that the $\mathfrak{A}_{[N]}$-valued duplication model with grounding is linked with the $\mathbb{G}^N$-valued poaching model with self-employment as follows: if $f\colon\,\mathbb{G}^N \to \mathbb{R}$ has the form $f(G) = \phi\circ\mathrm{adj}((v_{(1)},\ldots,v_{(N)}))$ for $G=(V,E)$ and some ordering $(v_{(1)},\ldots,v_{(N)})$ of $V$, then
\begin{equation}
\label{f:182}
{\mathcal L}_{G}\Big(\big(f(X^N_t)\big)_{t \ge 0}\Big) ={\mathcal L}_{\mathrm{adj}((v_{(1)},\ldots,v_{(N)}))}
\Big(\big(\phi(M^{[N]}_t)\big)_{t\ge 0}\Big).
\end{equation}

The main goal of the present paper is to show that the poaching model with self-employment converges to a diffusion with values in the space of continuum limits of finite graphs. The corresponding notion of convergence will be introduced in Section~\ref{S:graphemes} and relies on the convergence of densities of finite subgraphs. We are particularly interested in the dynamics of these subgraph densities. Subgraph densities are given by the probability that the subgraph spanned by a sample of vertices agrees with a given graph of the prescribed sample size. They are captured by test functions of the form
\begin{equation}
\label{f:122}
\Phi^{\Xi,A}(G) := \frac{(N-k)!}{N!} \sum_{(x_1,\ldots,x_{\#\Xi}) \in V^\Xi,\#\{x_1,x_2,\ldots,x_k\} = \#\Xi}\,
\mathbf{1}_A\Big(\mathrm{adj}\big((x_1,\ldots,x_{\#\Xi})\big)\Big),
\end{equation}
where $\Xi\subseteq[N]$ and $A\in\mathfrak{A}_{\Xi}$. We refer to $\Phi^{[k],A}(G)$ as the density of the $k$-graph with  $A\in\mathfrak{A}_{[k]}$ within the $N$-graph $G=(V,E)$.

A simple consequence of (\ref{f:182}) is the following.

\begin{lemma}[Generator action on subgraph densities]
Let $G=(V,E)$, $N\in\mathbb{N}$, $\Xi\subseteq[N]$ and $A\in\mathfrak{A}_{\Xi}$. Then,
\begin{equation}
\label{f:125b}
\begin{aligned}
(\Omega^N_{\mathrmtiny{ graph}}\Phi^{\Xi,A})(G)
&= \frac{(N-k)!}{N!} \sum_{\scriptscriptstyle \underline{x}\in V^{\Xi}, \atop
 \#\{x_1,\ldots,x_{\#\Xi}\}=\#\Xi } \Omega^{\Xi}_{\mathrmtiny{ adjacency}}\,
\mathbf{1}_A\big(\mathrm{adj}\big(\underline{x}\big)\big).
\end{aligned}
\end{equation}
\label{L:002}
\end{lemma}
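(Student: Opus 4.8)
The plan is to derive \eqref{f:125b} directly from the coupling identity \eqref{f:182} by differentiating at $t=0$. First I would recall that \eqref{f:182} says the law of the process $(f(X^N_t))_{t\ge0}$, for $f=\phi\circ\mathrm{adj}((v_{(1)},\ldots,v_{(N)}))$ with a fixed ordering of $V$, coincides with the law of $(\phi(M^{[N]}_t))_{t\ge0}$ started from $\mathrm{adj}((v_{(1)},\ldots,v_{(N)}))$. Taking generators of both sides (both chains are finite-state, so this is legitimate: the semigroups are differentiable at $0$ and the generators are the derivatives) gives, for every $G=(V,E)$ with chosen ordering,
\begin{equation*}
(\Omega^N_{\mathrmtiny{ graph}}f)(G) = (\Omega^{[N]}_{\mathrmtiny{ adjacency}}\phi)\big(\mathrm{adj}((v_{(1)},\ldots,v_{(N)}))\big).
\end{equation*}
The next step is to observe that $\Phi^{\Xi,A}$ from \eqref{f:122} is an average of such functions. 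Indeed, fix an injection $\underline{x}=(x_1,\ldots,x_{\#\Xi})\in V^\Xi$; the map $G\mapsto \mathbf{1}_A(\mathrm{adj}(\underline{x}))$ depends on $G$ only through the $\#\Xi\times\#\Xi$ submatrix of its adjacency matrix indexed by the image of $\underline{x}$, hence it is of the form $\phi\circ\mathrm{adj}$ for a suitable $\phi$ on $\mathfrak{A}_\Xi$, namely $\phi=\mathbf{1}_A$, once one identifies $\Xi$ with the index set of the sampled coordinates. So by the displayed generator identity applied with this $\phi$,
\begin{equation*}
\Omega^N_{\mathrmtiny{ graph}}\big(\mathbf{1}_A(\mathrm{adj}(\underline{x}))\big)(G) = \big(\Omega^{\Xi}_{\mathrmtiny{ adjacency}}\mathbf{1}_A\big)\big(\mathrm{adj}(\underline{x})\big).
\end{equation*}
Finally I would sum this over all injective $\underline{x}\in V^\Xi$ and multiply by the normalisation $\tfrac{(N-k)!}{N!}$ (with $k=\#\Xi$), using linearity of $\Omega^N_{\mathrmtiny{ graph}}$, to obtain exactly \eqref{f:125b}.

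The one genuine subtlety — and the step I expect to be the main obstacle — is the bookkeeping needed to make the reduction from $\Omega^{[N]}_{\mathrmtiny{ adjacency}}$ to $\Omega^{\Xi}_{\mathrmtiny{ adjacency}}$ precise. Formula \eqref{f:182} is stated with $\phi$ a function of the full matrix $\mathrm{adj}((v_{(1)},\ldots,v_{(N)}))$ on $[N]$, whereas in \eqref{f:125b} the right-hand side only involves the $\Xi$-generator acting on a function of the submatrix $\mathrm{adj}(\underline{x})$. One must check that for a test function depending only on the entries $(A_{i,j})_{i,j\in\mathrm{im}(\underline{x})}$, the action of $\Omega^{[N]}_{\mathrmtiny{ adjacency}}$ reduces to that of the $\Xi$-generator: the grounding terms $0_i$ for $i\notin\mathrm{im}(\underline{x})$ leave the relevant submatrix untouched and contribute nothing, the grounding terms for $i\in\mathrm{im}(\underline{x})$ restrict correctly, and for the duplication moves $\sigma_{i,j}$ one must note that $\sigma_{i,j}$ changes only row/column $j$, so if $j\notin\mathrm{im}(\underline{x})$ the submatrix is unchanged and the term vanishes, while if $j\in\mathrm{im}(\underline{x})$ then also $i$ can be taken in $\mathrm{im}(\underline{x})$ (the moves with $i\notin\mathrm{im}(\underline{x})$ need a separate look but in fact the whole point of the coupling \eqref{f:182} is that it already encodes exactly this consistency — the ordering of $V$ beyond the sampled coordinates is irrelevant). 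Cleanly, this consistency is most easily seen as a direct corollary of \eqref{f:182} itself rather than re-examined move by move: \eqref{f:182} holds for \emph{every} choice of ordering and every $\Xi\subseteq[N]$, so restricting to an ordering whose first $k$ entries are $\underline{x}$ and to $\phi$ depending only on the top-left $k\times k$ block immediately yields the $\Xi$-version. I would phrase the proof that way to keep it short, then conclude by summation as above. The remaining arithmetic — checking the $\tfrac{(N-k)!}{N!}$ prefactors match — is routine and I would state it without detailed computation.
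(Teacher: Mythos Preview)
Your overall plan --- derive the lemma from \eqref{f:182} by differentiating at $t=0$ and using linearity --- is exactly what the paper has in mind (the paper gives no separate proof, only the remark that the lemma is ``a simple consequence of \eqref{f:182}''). However, your execution contains a genuine error. The displayed term-by-term identity
\[
\Omega^N_{\mathrmtiny{ graph}}\big(\mathbf{1}_A(\mathrm{adj}(\underline{x}))\big)(G) \;=\; \big(\Omega^{\Xi}_{\mathrmtiny{ adjacency}}\mathbf{1}_A\big)\big(\mathrm{adj}(\underline{x})\big)
\]
is \emph{false} for a fixed injection $\underline{x}$. The problematic moves are precisely the ones you flag and then dismiss: poaching moves $(v_1,v_2)$ with $v_2 = x_j \in \mathrm{im}(\underline{x})$ but $v_1 \notin \mathrm{im}(\underline{x})$. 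These \emph{do} change the $\Xi$-submatrix (the new $j$-th row depends on the off-sample connections of $v_1$), so their contribution does not vanish, and there is no corresponding term in $\Omega^\Xi_{\mathrmtiny{ adjacency}}$. Your suggested fix --- that \eqref{f:182} ``holds for every $\Xi\subseteq[N]$'' --- is a misreading: \eqref{f:182} is stated only for the full chain $M^{[N]}$, and the projection of $M^{[N]}$ onto a $\Xi$-submatrix is not autonomous (not Markov), so no direct $\Xi$-version of \eqref{f:182} exists. Concretely, take $N=3$, $G$ the path $1\text{--}2\text{--}3$, $\Xi=\{1,2\}$, $A$ the nonzero element of $\mathfrak{A}_{\{1,2\}}$, and $\underline{x}=(1,3)$: the left-hand side of your displayed identity equals $4$ while the right-hand side equals $2$.

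What makes the lemma true is that these extra terms cancel \emph{after} summing over all injections $\underline{x}$. For a poaching move $(v_1,x_j)$ with $v_1\notin\mathrm{im}(\underline{x})$, the post-move submatrix $\mathrm{adj}(\underline{x})$ in $G^{(v_1,x_j)}$ coincides with $\mathrm{adj}(\underline{x}')$ in the original $G$, where $\underline{x}'$ is obtained from $\underline{x}$ by replacing $x_j$ with $v_1$. The map $(\underline{x},j,v_1)\mapsto(\underline{x}',j,x_j)$ is an involution on the set of triples $\{(\underline{x},j,v):\underline{x}\text{ injective},\ j\in\Xi,\ v\notin\mathrm{im}(\underline{x})\}$, so the corresponding sum of increments telescopes to zero. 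Once this symmetrisation is in place, the remaining poaching terms (poacher and poachee both sampled, or poachee unsampled) and the self-employment terms match $\Omega^\Xi_{\mathrmtiny{ adjacency}}$ exactly, and your final summation-and-normalisation step finishes the proof. You should replace the false pointwise identity by this cancellation argument.
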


Lemma~\ref{L:002} allows us to express the action of the generator of the $N$-graph-valued poaching with self-employment model on subgraph densities as follows.

\begin{proposition}[Graph dynamics acting on subgraph densities]
Let $N\in\mathbb{N}$,  $\Xi\subseteq[N]$ and $A\in\mathfrak{A}_\Xi$. Then, for $G=(V,E)$,
\begin{equation}
\label{f:123}
\begin{aligned}
&(\Omega^N_{\mathrmtiny{ graph}}\Phi^{\Xi,A})(G)\\
&= \sum_{i\not=j\in\Xi} \Big[\mathbf{1}_A\big\{A_{j,l} = A_{i,l}+\delta_{i,l},\,\,\forall\,\ell\in\Xi\setminus\{j\}\big\}\,\Phi^{\Xi\setminus\{j\},
\vartheta_{j}(A)}(G)-\Phi^{\Xi,A}(G)\Big]\\
&\quad + \mu \sum_{j\in\Xi} \Big[\mathbf{1}_A\big\{A_{j,l}=0,\,\,\forall\,\ell\in\Xi\big\}\,\Phi^{\Xi\setminus\{j\},\vartheta_j(A)}(G)-\Phi^{\Xi,A}(G)\Big],
\end{aligned}
\end{equation}
where $\vartheta_j(A)=(A_{k,l})_{k,l\in\Xi\setminus\{j\}}$ is the adjacency matrix $A$ on $\Xi$ with the entries in the $j^{\mathrm{th}}$ row and column erased.
\label{P:002}
\end{proposition}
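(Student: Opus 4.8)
The plan is to start from Lemma~\ref{L:002}, which already reduces the computation of $(\Omega^N_{\mathrmtiny{graph}}\Phi^{\Xi,A})(G)$ to understanding how the adjacency-matrix generator $\Omega^\Xi_{\mathrmtiny{adjacency}}$ acts on the indicator function $\mathbf{1}_A$ evaluated at $\mathrm{adj}(\underline x)$, summed over injective tuples $\underline x \in V^\Xi$. Since $\Omega^\Xi_{\mathrmtiny{adjacency}}$ is a sum of two families of jump terms (duplication $\sigma_{i,j}$ at rate $1$, grounding $0_i$ at rate $\mu$), I would treat the two contributions separately and show that each collapses, after summing over $\underline x$, into a difference of subgraph densities of the asserted form. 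The key structural observation is that the operators $\sigma_{i,j}$ and $0_i$ on $\mathfrak{A}_\Xi$ are exactly the adjacency-level shadows of, respectively, vertex $j$ being poached by vertex $i$ and vertex $j$ becoming self-employed; in both cases the new value of the $j$th row/column is \emph{determined} by the rest of the matrix (it is copied from the $i$th row, plus the diagonal correction $\delta_{i,l}$, in the duplication case, or set identically to $0$ in the grounding case).

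The main computation, which I expect to be the bulk of the work though not conceptually hard, is to rewrite $\mathbf{1}_A(\sigma_{i,j}(\mathrm{adj}(\underline x)))$ — the indicator that the matrix \emph{after} the duplication move equals $A$ — as a product of two factors: first, a constraint on $A$ itself, namely that the $j$th row of $A$ must already be consistent with having been copied from the $i$th row (this is the factor $\mathbf{1}_A\{A_{j,l}=A_{i,l}+\delta_{i,l}\ \forall\ell\in\Xi\setminus\{j\}\}$, which does not depend on $\underline x$ and hence factors out of the $\underline x$-sum); and second, the indicator that the \emph{restriction} of $\mathrm{adj}(\underline x)$ to the index set $\Xi\setminus\{j\}$ equals $\vartheta_j(A)$. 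Here the point is that once the $j$th coordinate is ``duplicated'' from the $i$th, the event $\{\sigma_{i,j}(\mathrm{adj}(\underline x)) = A\}$ depends only on the entries of $\mathrm{adj}(\underline x)$ in rows and columns indexed by $\Xi\setminus\{j\}$, i.e. on the subgraph spanned by $(x_k)_{k\in\Xi\setminus\{j\}}$. Summing $\mathbf{1}_{\vartheta_j(A)}(\mathrm{adj}((x_k)_{k\in\Xi\setminus\{j\}}))$ over all injective $\underline x\in V^\Xi$ and comparing with the definition~\eqref{f:122} of $\Phi^{\Xi\setminus\{j\},\vartheta_j(A)}$: each injective tuple on $\Xi\setminus\{j\}$ is obtained from exactly $N-(\#\Xi-1)$ injective tuples on $\Xi$ (the extra coordinate $x_j$ ranges over the $N-\#\Xi+1$ remaining vertices and plays no role in the summand), and tracking the normalising factorials $\tfrac{(N-k)!}{N!}$ versus $\tfrac{(N-(k-1))!}{N!}$ shows the combinatorial constants match, yielding precisely $\Phi^{\Xi\setminus\{j\},\vartheta_j(A)}(G)$. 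The grounding term is handled the same way with the simpler constraint $\mathbf{1}_A\{A_{j,l}=0\ \forall\ell\in\Xi\}$ replacing the duplication constraint.

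The one genuinely fiddly point — and the place where I would be most careful — is the bookkeeping of index sets and the interplay between ``$\Xi$'' and its cardinality $k=\#\Xi$ in the normalisation: the statement is written for a general $\Xi\subseteq[N]$ but the densities $\Phi^{\Xi,A}$ are defined via tuples in $V^\Xi$ with $\#\Xi$ distinct entries, so one must be consistent about whether the ``free'' vertex $x_j$ is being summed over $V$ or over $V$ minus the already-used vertices, and correspondingly whether the prefactor uses $(N-\#\Xi)!$ or $(N-\#\Xi+1)!$. I would resolve this by writing the double sum $\sum_{\underline x} \Omega^\Xi_{\mathrmtiny{adjacency}} \mathbf{1}_A(\mathrm{adj}(\underline x))$ out term by term, reindexing the sum over injective $\Xi$-tuples as (sum over injective $(\Xi\setminus\{j\})$-tuples)$\times$(sum over the $N-\#\Xi+1$ remaining choices of $x_j$), and checking the constant. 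Everything else — symmetry of $A$, the diagonal correction $\delta_{i,l}$ coming from the fact that after poaching, $v_j$ becomes adjacent to $v_i$, and the sum over $i\neq j$ and over $j$ respectively — is a direct transcription of definitions~\eqref{f:002}, \eqref{f:003} and~\eqref{f:125b}.
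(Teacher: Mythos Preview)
Your plan is correct and follows essentially the same route as the paper's own proof: the paper also starts from Lemma~\ref{L:002}, rewrites $\mathbf{1}_A(\sigma_{i,j}(\mathrm{adj}(\underline x)))$ via the equivalence ``$\sigma_{i,j}(B)=A$ iff $\vartheta_j(B)=\vartheta_j(A)$ and $A_{l,j}=A_{l,i}+\delta_{l,i}$ for all $l\in\Xi\setminus\{j\}$'' (and analogously for $0_i$), and then averages over injective tuples. Your treatment of the free coordinate $x_j$ and the matching of the factorials $(N-k)!/N!$ versus $(N-k+1)!/N!$ is in fact more explicit than the paper, which compresses that step into the single phrase ``averaging the latter relation over all $\#\Xi$-element subsets of $V$''.
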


\begin{proof}
Let $G=(V,E)$, $N\in\mathbb{N}$, $\Xi\subseteq[N]$, $\underline{x}=(x_1,\ldots,x_{\#\Xi})\in V^{\Xi}$ with distinct entries, and $A\in\mathfrak{A}_{\Xi}$. Then,
\begin{equation}
\label{f:125}
\begin{aligned}
&(\Omega^N_{\mathrmtiny{ adjacency}}\mathbf{1}_A)\big(\mathrm{adj}\big(\underline{x}\big)\big)\\
&= \sum_{i\not =j\in\Xi} \Big[\mathbf{1}_A\big(\sigma_{i,j}\big(\mathrm{adj}(\underline{x})\big)\big)
-\mathbf{1}_A\big(\mathrm{adj}(\underline{x})\big)\Big]
+ \mu \sum_{i\in\Xi} \Big[\mathbf{1}_A\big(0_{i}\big(\mathrm{adj}(\underline{x})\big)\big)
-\mathbf{1}_A\big(\mathrm{adj}(\underline{x})\big)\Big]\\
&= \sum_{i\not =j\in\Xi}\Big[\mathbf{1}\{A_{l,j}=A_{l,i}+\delta_{l,i}\,\,\forall\, l\in\Xi\setminus\{j\}\}
\,\mathbf{1}_{\vartheta_j(A)}\big(\vartheta_{j}\big(\mathrm{adj}(\underline{x})\big)\big)
-\mathbf{1}_A\big(\mathrm{adj}(\underline{x})\big)\Big]\\
&\quad + \mu \sum_{i\in\Xi}\Big[\mathbf{1}\{A_{l,i}=0,\,\,\forall\, l\in\Xi\}
\mathbf{1}_{\vartheta_i(A)}\big(\vartheta_i\big(\mathrm{adj}(\underline{x})\big)\big)
- \mathbf{1}_A\big(\mathrm{adj}(\underline{x})\big)\Big].
\end{aligned}
\end{equation}
Here we use that $\sigma_{i,j}(B)=A$ if and only if $\vartheta_j(B)=\vartheta_j(A)$ and $A_{l,j}=A_{l,i}+\delta_{l,i}$ for all $l\in\Xi\setminus\{j\}$, and that $0_i(B)=A$ if and only if $\vartheta_i(B)=\vartheta_i(A)$ and $A_{k,i}=0$ for all $i\in\Xi$. Averaging the latter relation over all $\#\Xi$-element subsets of $V$, we get the claim.
\end{proof}

We close this subsection with a duality relation which, due to the special algebraic form of the generator as stated in Lemma~\ref{L:002}, can later be lifted to a duality relation for the limiting diffusion we want to construct. To consider the time-reversed Markov chain, we fix $N\in\mathbb{N}$, denote by $q_{\tilde{A},A}$ the rate at which the $\mathfrak{A}_{[N]}$-valued duplication model with grounding jumps from $\tilde{A}$ to $A$, and put
\begin{equation}
\label{f:011}
\overleftarrow{q}_{A,\tilde{A}}:=q_{\tilde{A},A}, \qquad \tilde{A},A\in\mathfrak{A}_{[N]}.
\end{equation}
We can again characterise the backward chain analytically through its generator $\overleftarrow{\Omega}^N_{\mathrmtiny{ adjacancy}}$ acting on functions $\phi\colon\,\mathfrak{A}_N\to\mathbb{R}$ as follows:
\begin{equation}
\label{f:127}
\overleftarrow{\Omega}^N_{\mathrmtiny{ adjacancy}}\phi\big(A\big)
= \sum_{\tilde{A}\in\mathfrak{A}_{[N]}}\overleftarrow{q}_{A,\tilde{A}} \big[\phi(\tilde{A})-\phi(A)\big].
\end{equation}
As observed and exploited in \cite{Seidel2015}, if the state space of a Markov chain is finite, then there is a Feynman-Kac duality relation to the time-reversed Markov chain, with the {\em duality function} given by
\begin{equation}
\label{f:012}
H(\tilde{A},A) := \mathbf{1}\{\tilde{A}=A\}, \qquad \tilde{A},A\in\mathfrak{A}_{[N]},
\end{equation}
and with the {\em potential} being the difference of the total rates in the backward and forward time Markov chain, i.e.,
\begin{equation}
\label{f:019}
\begin{aligned}
\beta^\mu_N(A)
&:= \sum_{\tilde{A}\in\mathfrak{A}_{[N]}} \Big[\overleftarrow{q}_{A,\tilde{A}}-q_{A,\tilde{A}}\Big]\\
&= \big[N(N-1)-\#\big\{1\le i\not =j\le N\colon\,A_{j,k}=A_{i,k}+\delta_{i,k}\,\,\forall\, k\in[N]\big\}\big]\\
&\quad + \mu \Big(N-\#\big\{i\in[N]:\,A_{k,i}=0\,\,\forall\, k\in[N]\big\}\Big).
 \end{aligned}
\end{equation}

\begin{proposition}[Feynman-Kac duality]
Let  $M=(M_t)_{t \ge 0}$ and $\overleftarrow{M}=(\overleftarrow{M}_t)_{t\ge 0}$ be the above $\mathfrak{A}_{[N]}$-valued Markov chains forward and backward in time, respectively. Then, for all $\tilde{A},A\in\mathfrak{A}_{[N]}$,
\begin{equation}
\label{f:020}
\mathbb{P}_{A}\big(M_t=\tilde{A}\big) = \mathbb{E}_{\tilde{A}}\Big[\mathbf{1}_{A}(\overleftarrow{M}_t)
\exp\Big(\int_0^t\beta^\mu_N(\overleftarrow{M}_s)\,\mathrm{d}s\Big)\Big].
\end{equation}
\label{P:006}
\end{proposition}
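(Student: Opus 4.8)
The plan is to read \eqref{f:020} as an instance of the general ``collapsing'' Feynman--Kac duality that every finite-state continuous-time Markov chain enjoys with respect to its time-reversal, and to verify it at the level of generators. Regard the generator $\Omega := \Omega^{[N]}_{\mathrmtiny{ adjacency}}$ of $M$ and the backward generator $\overleftarrow{\Omega} := \overleftarrow{\Omega}^N_{\mathrmtiny{ adjacency}}$ of $\overleftarrow{M}$ as $\mathfrak{A}_{[N]}\times\mathfrak{A}_{[N]}$ matrices: for $\tilde A\neq A$ one has $\Omega_{A,\tilde A}=q_{A,\tilde A}$ and $\overleftarrow{\Omega}_{A,\tilde A}=\overleftarrow{q}_{A,\tilde A}=q_{\tilde A,A}$ by \eqref{f:011}, with diagonal entries $\Omega_{A,A}=-\sum_{\tilde A\neq A}q_{A,\tilde A}$ and $\overleftarrow{\Omega}_{A,A}=-\sum_{\tilde A\neq A}q_{\tilde A,A}$, so that $\mathbb{P}_A(M_t=\tilde A)=(\mathrm{e}^{t\Omega})_{A,\tilde A}$. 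Since $\mathfrak{A}_{[N]}$ is finite, the Feynman--Kac evolution $\psi\mapsto\big(\tilde A\mapsto\mathbb{E}_{\tilde A}[\psi(\overleftarrow{M}_t)\exp(\int_0^t\beta^\mu_N(\overleftarrow{M}_s)\,\mathrm{d}s)]\big)$ is the matrix semigroup $\mathrm{e}^{t(\overleftarrow{\Omega}+\mathrm{diag}(\beta^\mu_N))}$; hence the right-hand side of \eqref{f:020} equals $\big(\mathrm{e}^{t(\overleftarrow{\Omega}+\mathrm{diag}(\beta^\mu_N))}\mathbf{1}_A\big)(\tilde A)=\big(\mathrm{e}^{t(\overleftarrow{\Omega}+\mathrm{diag}(\beta^\mu_N))}\big)_{\tilde A,A}$.

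The crux is then the purely algebraic identity
\[
\overleftarrow{\Omega}+\mathrm{diag}(\beta^\mu_N)=\Omega^{\top}.
\]
For $\tilde A\neq A$ it is immediate from \eqref{f:011}: $\overleftarrow{\Omega}_{A,\tilde A}=q_{\tilde A,A}=\Omega_{\tilde A,A}=(\Omega^{\top})_{A,\tilde A}$, and the diagonal correction does not affect off-diagonal entries. For $\tilde A=A$ one needs $\overleftarrow{\Omega}_{A,A}+\beta^\mu_N(A)=\Omega_{A,A}$, i.e.\
$\beta^\mu_N(A)=\Omega_{A,A}-\overleftarrow{\Omega}_{A,A}=-\sum_{\tilde A\neq A}q_{A,\tilde A}+\sum_{\tilde A\neq A}q_{\tilde A,A}=\sum_{\tilde A\neq A}\big(q_{\tilde A,A}-q_{A,\tilde A}\big)$,
which is exactly the first line of \eqref{f:019} (the total outgoing rate of $\overleftarrow{M}$ at $A$ minus that of $M$ at $A$, the contributions from the self-rates $q_{A,A}$ cancelling). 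Taking matrix exponentials and using $\mathrm{e}^{t\Omega^{\top}}=(\mathrm{e}^{t\Omega})^{\top}$ yields
\[
\big(\mathrm{e}^{t(\overleftarrow{\Omega}+\mathrm{diag}(\beta^\mu_N))}\big)_{\tilde A,A}=(\mathrm{e}^{t\Omega^{\top}})_{\tilde A,A}=(\mathrm{e}^{t\Omega})_{A,\tilde A}=\mathbb{P}_A(M_t=\tilde A),
\]
which is \eqref{f:020}; this is precisely the finite-state Feynman--Kac duality of \cite{Seidel2015}, with indicator duality function $H(\tilde A,A)=\mathbf{1}\{\tilde A=A\}$ and the total-rate difference as potential. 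For completeness the total-rate difference is rewritten as in the second and third lines of \eqref{f:019} by a direct count: the total outgoing rate of $M$ at $A$ is $\big(N(N-1)-\#\{(i,j)\colon i\neq j,\ \sigma_{i,j}(A)=A\}\big)+\mu\big(N-\#\{i\colon 0_i(A)=A\}\big)$, and, using the characterisation of the maps $\sigma_{i,j}$ and $0_i$ already invoked in the proof of Proposition~\ref{P:002}, one has $\sigma_{i,j}(A)=A$ iff $A_{j,k}=A_{i,k}+\delta_{i,k}$ for all $k$ and $0_i(A)=A$ iff $A_{k,i}=0$ for all $k$, whence the stated expression.

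\textbf{Main obstacle.} There is no genuine analytic difficulty here; the work is entirely bookkeeping. The two points that require care are (i) keeping the forward/backward roles and the sign of the potential aligned with the definitions \eqref{f:011}, \eqref{f:127}, \eqref{f:019}, and (ii) checking that $\mathrm{diag}(\beta^\mu_N)$ is exactly the correction that turns the (non-conservative) matrix $\Omega^{\top}$ into the honest generator $\overleftarrow{\Omega}$ -- this is the abstract reason the duality holds. An alternative to the matrix-exponential route, if one prefers to stay probabilistic, is to show directly that $s\mapsto\sum_{A'}\mathbb{P}_A(M_{t-s}=A')\,\mathbb{E}_{A'}\big[\mathbf{1}_{A}(\overleftarrow{M}_s)\exp(\int_0^s\beta^\mu_N(\overleftarrow{M}_u)\,\mathrm{d}u)\big]$ has vanishing derivative on $[0,t]$; this reduces to the same generator identity, so I would present the matrix-exponential version as the shorter one.
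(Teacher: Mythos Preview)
Your proof is correct and follows essentially the same approach as the paper. Both arguments establish the generator-level relation and then pass to the semigroup: the paper shows $(\Omega H)(\tilde A,A)=(\overleftarrow{\Omega} H)(\tilde A,A)+\beta^\mu_N(\tilde A)H(\tilde A,A)$ for the indicator duality function and invokes \cite[Theorem~4.4.11]{EthierKurtz1986}, while you phrase the identical computation as the matrix identity $\overleftarrow{\Omega}+\mathrm{diag}(\beta^\mu_N)=\Omega^{\top}$ and exponentiate directly, which is the same step made explicit in the finite-state setting.
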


\begin{proof}
For all $\tilde{A},A\in\mathfrak{A}_{[N]}$,
\begin{equation}
\label{f:126}
\begin{aligned}
(\Omega^N_{\mathrmtiny{ adjacancy}}\mathbf{1}_{\tilde{A}})(A)
&= (\Omega^N_{\mathrmtiny{ adjacancy}}H)(\tilde{A},A)\\
&= \sum_{B\in\mathfrak{A}_{[N]}}q_{\tilde{A},B}\Big[H\big(B,A\big)-H\big(\tilde{A},A\big)\Big]\\
&= \sum_{B\in\mathfrak{A}_{[N]}}\overleftarrow{q}_{B,\tilde{A}}H(B,A)
-H\big(\tilde{A},A\big)\sum_{B\in\mathfrak{A}_{[N]}}q_{\tilde{A},B}\\
&= \overleftarrow{q}_{A,\tilde{A}}-H(\tilde{A},A)\left(\sum_{B\in\mathfrak{A}_{[N]}}\overleftarrow{q}_{\tilde{A},B}
-\beta^\mu_N(\tilde{A})\right)\\
&= \sum_{B\in\mathfrak{A}_{[N]}}\overleftarrow{q}_{A,B}\Big[H(\tilde{A},B)-H(\tilde{A},A)\Big]
+\beta^\mu_N(\tilde{A})H\big(\tilde{A},A\big)\\
&= (\overleftarrow{\Omega}^N_{\mathrmtiny{ adjacancy}}H)(\tilde{A},A)+\beta^\mu_N(\tilde{A})H(\tilde{A},A).
\end{aligned}
\end{equation}
Due to the finiteness of $V$, the potential $\beta^\mu_N(\tilde{A}))$ is bounded from above and so the claim follows (see e.g.\ \cite[Theorem~4.4.11]{EthierKurtz1986}).
\end{proof}


\subsection{The Moran model with mutation}
\label{Sub:infiniteAllele}

Let $\mathbb{K}:=[0,1]$ and $N\in\mathbb{N}$. An element $(x_1,\ldots,x_N)$ $\in \mathbb{K}^N$ is referred to as an $N$-population with genetic types in $\mathbb{K}=[0,1]$. In this subsection, we consider one of the simplest selectively neutral $\mathbb{K}^N$-valued Markov chains $Y=(Y_t)_{t \geq 0}$. Given the current $N$-population $(x_1,\ldots,x_N)$, the evolution of $Y$ is given by:
\begin{itemize}
\item
{\bf Resampling.}
For each pair $i,j \in [N]$ with $i < j$, at rate $1$, the $j^{\mathrm{th}}$ individual is pushed out of the population and is replaced by a clone of the $i^{\mathrm{th}}$ individual.
\item
{\bf Mutation.}
For each $i \in [N]$, at rate $\mu\ge 0$, the genetic type of the $i^{\mathrm{th}}$ individual is replaced by a new type uniformly sampled from $\mathbb{K}$.
\end{itemize}
The infinitesimal generator $\Omega^{N}_{\mathrmtiny{ WFmut}}$ of this Markov chain acts on bounded continuous functions $f\colon\,\mathbb{K}^N\to\mathbb{R}$ as follows: for all $\underline{x}\in \mathbb{K}^N$,
\begin{equation}
\label{f:005}
\begin{aligned}
(\Omega^{N}_{\mathrmtiny{ WFmut}}f)(\underline{x})
&= \sum_{1\le i\not =j\le N} \Big[f\big(\theta_{i,j}(\underline{x})\big)-f\big(\underline{x}\big)\Big]
+ \mu \sum_{i=1}^N \int^1_0\Big[f\big(\kappa^y_{i}(\underline{x})\big)-f\big(\underline{x}\big)\Big]\,\mathrm{d}y,
\end{aligned}
\end{equation}
where, for each $i,j \in [N]$ with $i\not =j$, $\theta_{i,j}$ denotes the {\em replacement operator} that sends the $N$-population $\underline{x}\in \mathbb{K}^N$ to the $N$-population $\theta_{i,j}(\underline{x})\in \mathbb{K}^N$ such that, for all $k \in [N]$,
\begin{equation}
\label{f:006}
\big(\theta_{i,j}(\underline{x})\big)_k := 
\begin{cases}
x_k, &\text{if } k\not=j, \\
x_i, &\text{if } k=j,
\end{cases}
\end{equation}
and, for $i \in [N]$ and $y\in \mathbb{K}$, $\kappa_i^y$ denotes the {\em mutation operator} that sends the $N$-population $\underline{x}\in \mathbb{K}^N$ to the $N$-population $\kappa^y_{i}(\underline{x})\in \mathbb{K}^N$ such that, for all $k \in [N]$,
\begin{equation}
\label{f:007}
\big(\kappa^y_{i}(\underline{x})\big)_k := 
\begin{cases}
x_k, &\text{if } k\not=i, \\
y, &\text{if } k=j.
\end{cases}
\end{equation}

We rely on the {\em resampling model with mutation} to provide a proof of Proposition~\ref{P:001}. For that, recall that ${\mathcal N}^N(\mathbb{N})$ denotes the set of integer-valued measures $\nu$ on $\mathbb{N}$ with $\sum_{k \in \mathbb{N}} k\nu(k)=N$, and consider the function $h\colon\,\mathbb{K}^N\to{\mathcal N}^N_f(\mathbb{N}_0)$ that evaluates for an $N$-population the frequency spectrum of types in the population, i.e., for $\underline{x}\in \mathbb{K}^N$,
\begin{equation}
\label{f:008}
h\big((x_1,\ldots,x_N)\big) := \sum_{y\in\{x_1,\ldots,x_N\}} \delta_{\#\{i\in[N]:\,x_i=y\}} \in {\mathcal N}(\mathbb{N}).
\end{equation}

\begin{lemma}[Frequency spectrum dynamics]
Let $Z:=(Z_t)_{t \ge 0}$ be given by $Z_t:=h(Y_t)$. Then $Z$ is an ${\mathcal N}^N(\mathbb{N})$-valued Markov chain whose generator $\Omega^N_{\mathrmtiny{ frequency}}$ acts on functions $F\colon\,{\mathcal N}^N(\mathbb{N})\to\mathbb{R}$ as follows:
\begin{equation}
\label{f:009}
\begin{aligned}
&(\Omega^N_{\mathrmtiny{ frequency}}F)(\nu)\\
&= \sum_{k_1=1}^{N-1}\sum_{k_2=2}^Nk_1\nu(k_1)k_2\big(\nu(k_2)-\delta_{k_1,k_2}\big)
\Big[F\big(\nu-\delta_{k_1}-\delta_{k_2}+\delta_{k_1+1}+\delta_{k_2-1}\big)-F\big(\nu\big)\Big]\\
&\quad + \sum_{k_1=1}^{N-1}k_1\nu(k_1)\big(\nu(1)-\delta_{k_1,1}\big)
\Big[F\big(\nu-\delta_{k_1}-\delta_{1}+\delta_{k_1+1}\big)-F\big(\nu\big)\Big]\\
&\quad + \mu\sum_{k=2}^Nk\nu(k)\Big[F\big(\nu-\delta_k+\delta_1+\delta_{k-1}\big)-F\big(\nu\big)\Big].
\end{aligned}
\end{equation}
\label{L:003}
\end{lemma}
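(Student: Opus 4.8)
The plan is to exhibit $Z=(h(Y_t))_{t\ge 0}$ as a \emph{Markov function} of the $\mathbb{K}^N$-valued chain $Y$. Since $Y$ is a pure-jump Markov chain with bounded rates and $h$ takes values in the countable set ${\mathcal N}^N(\mathbb{N})$, it suffices, by the standard criterion for a function of a Markov process to again be Markov (see e.g.\ \cite{EthierKurtz1986}), to verify that for every bounded $F\colon\,{\mathcal N}^N(\mathbb{N})\to\mathbb{R}$ the map $\underline{x}\mapsto(\Omega^N_{\mathrmtiny{WFmut}}(F\circ h))(\underline{x})$ depends on $\underline{x}$ only through $\nu:=h(\underline{x})$; the resulting function of $\nu$ is then the generator of $Z$, and being a bounded-rate jump generator on a countable state space it determines the law of $Z$. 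Hence the whole argument reduces to evaluating $(\Omega^N_{\mathrmtiny{WFmut}}(F\circ h))(\underline{x})$ by running through the transition families of $Y$ one at a time and collecting the resulting changes of the frequency spectrum.

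For the resampling family, fix $\underline{x}$ with $h(\underline{x})=\nu$ and consider one replacement $\theta_{i,j}$, which changes the type of $j$ into the type of $i$. If $x_i=x_j$ the frequency spectrum is unchanged and the summand vanishes, so only ordered pairs $(i,j)$ with $x_i\neq x_j$ contribute; for such a pair let $k_1$ be the size of the type-block of $x_i$ and $k_2$ that of $x_j$, so that $h(\theta_{i,j}(\underline{x}))=\nu-\delta_{k_1}-\delta_{k_2}+\delta_{k_1+1}+\delta_{k_2-1}$, with the understanding that a $\delta_0$ term (arising when $k_2=1$) is suppressed since we work with measures on $\mathbb{N}$. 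For fixed $k_1,k_2$ the number of such ordered pairs equals the number of ways to pick an ordered pair of \emph{distinct} type-blocks, one of size $k_1$ and one of size $k_2$, times the number of vertices in each, namely $k_1k_2\,\nu(k_1)\big(\nu(k_2)-\delta_{k_1,k_2}\big)$; singling out the subcase $k_2=1$, where the recipient block is a singleton that disappears, then produces precisely the first two sums of \eqref{f:009}.

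For the mutation family, $\kappa^y_i$ gives $i$ a fresh type $y$; for Lebesgue-a.e.\ $y\in\mathbb{K}$ one has $y\notin\{x_1,\ldots,x_N\}$, so $i$ becomes a new singleton, and if $i$ sat in a block of size $k\ge 2$ this yields $h(\kappa^y_i(\underline{x}))=\nu-\delta_k+\delta_{k-1}+\delta_1$ while if $i$ was already a singleton the spectrum is unchanged. Since there are $k\nu(k)$ vertices lying in blocks of size $k$ and the inner $\int_0^1\mathrm{d}y$ only reproduces this a.e.\ value, the mutation contribution is $\mu\sum_{k=2}^Nk\nu(k)\big[F(\nu-\delta_k+\delta_1+\delta_{k-1})-F(\nu)\big]$, which is the third sum of \eqref{f:009}; adding the two parts gives $(\Omega^N_{\mathrmtiny{WFmut}}(F\circ h))(\underline{x})=(\Omega^N_{\mathrmtiny{frequency}}F)(h(\underline{x}))$, as required. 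The only delicate point is the combinatorial bookkeeping in the resampling term: one has to exclude pairs of vertices lying in the same block (hence the $\delta_{k_1,k_2}$ correction) and treat the boundary cases in which a size-one block vanishes (hence the split into the $k_2\ge2$ and $k_2=1$ sums), while remembering that the target measures live on $\mathbb{N}$ rather than $\mathbb{N}_0$; everything else is formal.
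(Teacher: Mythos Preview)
Your proof is correct and follows essentially the same approach as the paper: both compute $(\Omega^N_{\mathrmtiny{WFmut}}(F\circ h))(\underline{x})$ by grouping the resampling and mutation transitions according to the sizes of the type-blocks involved, verify that the result depends on $\underline{x}$ only through $h(\underline{x})$, and conclude via surjectivity of $h$ (the paper) or, equivalently, the Markov-function criterion (your version). Your write-up is in fact more explicit than the paper's about the combinatorial bookkeeping (the $\delta_{k_1,k_2}$ correction, the $k_2=1$ boundary case, and the a.e.\ argument for mutation), but the underlying computation is the same.
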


\begin{proof}
For all $F\colon\,{\mathcal N}^N(\mathbb{N})\to\mathbb{R}$,
\begin{equation}
\begin{aligned}
\label{e:019}
&(\Omega^N_{\mathrmtiny{ WFmut}}F\circ h)\big((x_1,\ldots,x_N)\big)\\
&= \sum_{1\le i\not =j\le N} \Big[F\circ h\big(\theta_{i,j}(\underline{x})\big)-F\circ h\big(\underline{x}\big)\Big]
+ \mu \sum_{i=1}^N\int^1_0 \Big(=[F\circ h\big(\kappa^y_{i}(\underline{x})\big)-F\circ h\big(\underline{x}\big)\Big]\,\mathrm{d}y\\
&= \sum_{k_1=1}^{N-1}\sum_{k_2=2}^Nk_1\nu(k_1)k_2\big(\nu(k_2)-\delta_{k_1,k_2}\big)
\Big[F\big(h(\underline{x})-\delta_{k_1}-\delta_{k_2}+\delta_{k_1+1}+\delta_{k_2-1}\big)-F\big(h(\underline{x})\big)\Big]\\
&\quad + \sum_{k_1=1}^{N-1}k_1\nu(k_1)\big(\nu(1)-\delta_{k_1,1}\big)
\Big[F\big(h(\underline{x})-\delta_{k_1} -\delta_{1}+\delta_{k_1+1}\big)-F\big(h(\underline{x})\big)\Big]\\
&\quad + \mu\sum_{k_0=2}^Nk_0\nu(k_0) \Big[F\big(h(\underline{x})-\delta_{k_0}+\delta_1\big)-F\big(h(\underline{x})\big)\Big].
\end{aligned}
\end{equation}
As $h\colon\,\mathbb{K}^N\to{\mathcal N}^N(\mathbb{N})$ is surjective, the claim follows.
\end{proof}

Recall the MED-$N$-graph from Definition~\ref{Def:001}. The next lemma states that the MED distribution is invariant under the dynamics of the frequency spectrum under the resampling model with mutation. We believe this fact to be known, but were unable to find a reference in the literature and therefore provide a proof here.

\begin{lemma}[Frequency spectrum of the resampling model with mutation]
The multivariate Ewens distribution $\pi^N_{\mathrmtiny{ MED}}$ is the unique stationary distribution.
\label{L:001}
\end{lemma}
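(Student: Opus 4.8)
The plan is to verify stationarity of $\pi^N_{\mathrmtiny{ MED}}$ directly against the frequency-spectrum generator $\Omega^N_{\mathrmtiny{ frequency}}$ from Lemma~\ref{L:003}, i.e.\ to check that $\sum_{\nu}\pi^N_{\mathrmtiny{ MED}}(\nu)(\Omega^N_{\mathrmtiny{ frequency}}F)(\nu)=0$ for all test functions $F$, which by finiteness of ${\mathcal N}^N(\mathbb{N})$ reduces to checking the balance equations $\sum_{\nu}\pi^N_{\mathrmtiny{ MED}}(\nu)\,q(\nu,\nu')=\pi^N_{\mathrmtiny{ MED}}(\nu')\sum_{\nu}q(\nu',\nu)$ after collecting the three types of transitions appearing in \eqref{f:009}. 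Uniqueness is the easy half: on the communicating class reachable from the all-singletons state $\nu=N\delta_1$ (which every configuration reaches, since mutation can always split off singletons and resampling can always merge, so the chain restricted to the recurrent class is irreducible), a stationary distribution is unique; so the work is entirely in the existence/invariance computation.

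First I would recall (or cite, via \cite{Ewens1972}, \cite{DonnellyTavari1986}) the classical recursive characterisation of the Ewens sampling formula: $\pi^N_{\mathrmtiny{ MED}}$ is the unique distribution satisfying the Chinese-restaurant-type recursion relating the $N$-sample to the $(N-1)$-sample. But since here we need invariance under a specific generator rather than a sampling consistency, the cleaner route is a direct detailed-balance-style bookkeeping. I would write $\nu$ with blocks of sizes $k$ having multiplicities $\nu(k)$, note $\pi^N_{\mathrmtiny{ MED}}(\nu)\propto\prod_j \mu^{\nu(j)}/(j^{\nu(j)}\nu(j)!)$, and compute the ratio $\pi^N_{\mathrmtiny{ MED}}(\nu')/\pi^N_{\mathrmtiny{ MED}}(\nu)$ for each of the three elementary moves: (i) the ``$k_1\to k_1+1$, $k_2\to k_2-1$'' move coming from resampling between two distinct-size blocks, (ii) the ``$k_1\to k_1+1$, absorb a singleton'' move (resampling where $j$ was already a singleton, $k_2=1$), and (iii) the mutation move ``$k\to k-1$, create a singleton''. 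The transition rates in \eqref{f:009} are explicitly polynomial in the $\nu(k)$'s (products like $k_1\nu(k_1)k_2(\nu(k_2)-\delta_{k_1,k_2})$), and the Ewens weights are designed precisely so that rate-times-weight telescopes; I expect each move to pair with a unique reverse move and the products of combinatorial factors to cancel against the $j^{\nu(j)}\nu(j)!$ denominators, leaving the balance identity. Concretely, resampling merging a size-$k_1$ and size-$k_2$ block into sizes $k_1+1, k_2-1$ has reverse given by another resampling event, and the factor $\mu$ never enters; mutation moves pair with resampling moves of type (ii) and here the single power of $\mu$ is exactly what the weight ratio produces.

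The key steps in order: (1) state the communicating class and deduce uniqueness; (2) fix $\nu$ and $\nu'$ differing by one elementary move, identify the forward rate $q(\nu,\nu')$ from \eqref{f:009} and the reverse rate $q(\nu',\nu)$; (3) compute $\pi^N_{\mathrmtiny{ MED}}(\nu)/\pi^N_{\mathrmtiny{ MED}}(\nu')$ from the product formula and verify $\pi^N_{\mathrmtiny{ MED}}(\nu)q(\nu,\nu')=\pi^N_{\mathrmtiny{ MED}}(\nu')q(\nu',\nu)$ move-by-move — but I should be careful that the chain is \emph{not} reversible (mutation creates a singleton, resampling destroys one, and these are different operations), so pure detailed balance fails; instead I would group transitions so that the \emph{net} flux out of each $\nu$ cancels. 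That is, I would verify $\sum_{\nu\neq\nu'}\big[\pi^N_{\mathrmtiny{ MED}}(\nu)q(\nu,\nu')-\pi^N_{\mathrmtiny{ MED}}(\nu')q(\nu',\nu')\big]$, organised by what configuration $\nu'$ results, equals zero by re-indexing sums. The main obstacle is exactly this non-reversibility: one has to set up the right cancellation scheme — most naturally, show that the resampling part and the mutation part of the generator each, or jointly, annihilate $\pi^N_{\mathrmtiny{ MED}}$ via a discrete ``flux'' argument, which amounts to one somewhat intricate but elementary manipulation of the sums in \eqref{f:009}. An alternative that sidesteps this is to invoke the known fact that the Ewens formula is the stationary law of the Moran/Wright-Fisher infinite-alleles model (as in \cite{Ewens1972}, \cite{DonnellyTavari1986}) and then note that $h$ pushes the stationary law of $Y$ forward to the stationary law of $Z$, using Lemma~\ref{L:003}; but since the excerpt signals the authors want a self-contained proof, the direct generator computation is the intended route.
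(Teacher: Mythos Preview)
Your proposal is correct and follows essentially the same route as the paper: the paper verifies the global balance equation $\sum_{\tilde\nu}\pi^N_{\mathrmtiny{ MED}}(\tilde\nu)q(\tilde\nu,\nu)=\bar q(\nu)\pi^N_{\mathrmtiny{ MED}}(\nu)$ by writing out the three incoming transition types, computing the corresponding Ewens-weight ratios $\pi^N_{\mathrmtiny{ MED}}(\tilde\nu)/\pi^N_{\mathrmtiny{ MED}}(\nu)$ explicitly, and summing to match the total exit rate, then invokes irreducibility for uniqueness. Your observation that the chain is non-reversible (so detailed balance fails and one must do global balance) is exactly right and is implicitly what the paper's computation carries out.
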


\begin{proof}
For $\tilde{\nu},\nu\in{\mathcal N}_N(\mathbb{N})$, once more denote by $q(\tilde{\nu},\nu)$ the rate to jump from $\tilde{\nu}$ to $\nu$, and put $\bar{q}(\nu) := \sum_{\nu'\in{\mathcal N}_N(\mathbb{N})} q(\nu,\nu')$ for the total rate to jump away from $\nu$. We start by showing that $\pi^N_{\mathrmtiny{ MED}}$ is indeed a stationary distribution, i.e., for all $\nu\in{\mathcal N}_N(\mathbb{N})$,
\begin{equation}
\label{f:120}
\sum_{\tilde{\nu}\in{\mathcal N}_N(\mathbb{N})}\pi^N_{\mathrmtiny{ MED}}\big(\tilde{\nu}\big)\,q(\tilde{\nu},\nu)
= \bar{q}(\nu)\pi^N_{\mathrmtiny{ MED}}(\nu).
\end{equation}
Note that
\begin{equation}
\label{f:014}
\begin{aligned}
&q(\tilde{\nu},\nu)\\
&= 
\begin{cases}
k_0\tilde{\nu}(k_0),
&\text{ if } \nu = \tilde{\nu}-\delta_{k_0}+\delta_1+\delta_{k_0-1}, \text{ for } k_0\in [N] \setminus \{ 1\} \\[1mm]
k_1\tilde{\nu}(k_1)k_2\big(\tilde{\nu}(k_2)-\delta_{k_1,k_2}\big),
&\text{ if } \nu=\tilde{\nu}-\delta_{k_2}-\delta_{k_1}+\delta_{k_1+1}+\delta_{k_2-1},\\
&\text{ for } k_1\in [N-1], k_2 \in [N] \setminus \{1\},\\[1mm]
k_1\tilde{\nu}(k_1)\big(\tilde{\nu}(1)-\delta_{k_1,1}\big),
&\text{ if } \nu=\tilde{\nu}-\delta_{1}-\delta_{k_1}+\delta_{k_1+1}, \text{ for }k_1\in \{N-1\},\\[1mm]
0,
&\text{ else;}
\end{cases}
\end{aligned}
\end{equation}
or, equivalently,
 \begin{equation}
\label{f:014b}
\begin{aligned}
&q(\tilde{\nu},\nu)\\
&= 
\begin{cases}
\mu k_0(\nu(k_0)+1),
&\text{ if } \nu=\tilde{\nu}-\delta_{k_0}+\delta_1+\delta_{k_0-1}, \text{ for } k_0\in [N] \setminus \{1\}\\[1mm]
k_1\big(\nu(k_1)+1+\delta_{k_1,k_2}-\delta_{k_1+1,k_2}\big)
&\text{ if } \nu=\tilde{\nu}-\delta_{k_2}-\delta_{k_1}+\delta_{k_1+1}+\delta_{k_2-1},\\
\hspace{1cm}\times k_2\big(\nu(k_2)+1-\delta_{k_1+1,k_2}\big),
&\text{ for } k_1\in [N-1],k_2\in [N] \setminus \{1\},\\[1mm]
k_1\big(\nu(k_1)+1+\delta_{k_1,1}\big)\big(\nu(1)+1\big),
&\text{ if } \nu=\tilde{\nu}-\delta_{1}-\delta_{k_1}+\delta_{k_1+1}, \text{ for }k_1\in [N-1],\\[1mm]
0,
&\text{ else.}
\end{cases}
\end{aligned}
\end{equation}
Moreover, for all $\nu\in{\mathcal N}_N(\mathbb{N})$,
\begin{equation}
\label{f:121}
\begin{aligned}
\bar{q}_\nu
&= \sum_{k_0=2}^N q_{\nu,\nu-\delta_{k_0}+\delta_1+\delta_{k_0-1}}
+\sum_{k_1=1}^{N-1} \sum_{k_2=2}^N q_{\nu,\nu-\delta_{k_1}-\delta_{k_2}+\delta_{k_1+1}+\delta_{k_2-1}}
+\sum_{k_1=1}^{N-1} q_{\nu,\nu-\delta_{k_1}-\delta_1+\delta_{k_1+1}}.
\end{aligned}
\end{equation}
Thus,
\begin{equation}
\label{f:121b}
\begin{aligned}
\bar{q}_\nu
&= \mu\big(N-\nu(1)\big)+\sum_{k_1=1}^{N-1}k_1\nu(k_1)\sum_{k_2=2,k_2\not=k_1}^Nk_2\nu(k_2)
+ \sum_{k_1=1}^{N-1} (k_1)^2\nu(k_1)\big(\nu(k_1)-1\big)\\
&\quad
+ \nu(1)N\,\mathbf{1}\{\nu(N)=0\}-\nu(1)\big(\nu(1)-1\big)\\
&= \mu\big(N-\nu(1)\big)+\sum_{k_1=1}^{N-1}k_1\nu(k_1)\big(N-\nu(1)-k_1\nu(k_1)+\nu(1)\delta_{k_1,1}\big)
+ \sum_{k_1=1}^{N-1}(k_1)^2\nu(k_1)\big(\nu(k_1)-1\big)\\
&\quad +\nu(1)N\mathbf{1}\{\nu(N)=0\}-\nu(1)\big(\nu(1)-1\big)\\
&= \mu\big(N-\nu(1)\big)+\big(N-\nu(1)\big)N\,\mathbf{1}\{\nu(N)=0\} + \nu(1)N\,\mathbf{1}\{\nu(N)=0\}-\nu(1)\big(\nu(1)-1\big)\\
&= \mu\big(N-\nu(1)\big)+N^2\,\mathbf{1}\{\nu(N)=0\}-\nu(1)\big(\nu(1)-1\big).
\end{aligned}
\end{equation}
We therefore find that
\begin{equation}
\label{f:015}
\begin{aligned}
&\sum_{\tilde{\nu}\in{\mathcal N}_N(\mathbb{N})}\pi^N_{\mathrmtiny{ MED}}\big(\tilde{\nu}\big)\,q\big(\tilde{\nu},\nu\big)\\
& = \mu \sum_{k_0=2}^N\pi^N_{\mathrmtiny{ MED}}\big(\nu+\delta_{k_0}-\delta_1-\delta_{k_0-1}\big)\,k_0\big(\nu(k_0)+1\big)\\
&\quad + \sum_{k_1=1}^{N-1}\sum_{k_2=2}^N\pi^N_{\mathrmtiny{ MED}}\big(\nu+\delta_{k_1}
+\delta_{k_2}-\delta_{k_1+1}-\delta_{k_2-1}\big)\,\\
&\qquad \times k_1\big(\nu(k_1)+1+\delta_{k_1,k_2}-\delta_{k_1+1,k_2}\big)k_2\big(\nu(k_2)+1-\delta_{k_1+1,k_2}\big)\big)\\
&\quad+\sum_{k_1=1}^{N-1}\pi^N_{\mathrmtiny{ MED}}\big(\nu+\delta_{k_1}+\delta_{1}-\delta_{k_1+1}\big)\,k_1\big(\nu(k_1)+1+\delta_{k_1,1}\big)\big(\nu(1)+1\big).
\end{aligned}
\end{equation}
For all $k_0\in [N] \setminus \{1\}$,
\begin{equation}
\label{f:013}
\begin{aligned}
\frac{\pi^N_{\mathrmtiny{ MED}}\big(\nu+\delta_{k_0}-\delta_1-\delta_{k_0-1}\big)}{\pi^N_{\mathrmtiny{ MED}}\big(\nu\big)}
&=\frac{1}{\mu}\nu(1)\big(\nu(k_0-1)-\delta_{2,k_0}\big)\frac{k_0-1}{k_0(\nu(k_0)+1)},
\end{aligned}
\end{equation}
while, for all $k_1\in [N-1]$ and $k_2\in [N] \setminus \{1\}$,
\begin{equation}
\label{f:017}
\begin{aligned}
&\frac{\pi^N_{\mathrmtiny{ MED}}\big(\nu+\delta_{k_1}+\delta_{k_2}-\delta_{k_1+1}
-\delta_{k_2-1}\big)}{\pi^N_{\mathrmtiny{ MED}}\big(\nu\big)}\\
&= \frac{(k_1+1-\delta_{k_1+1,k_2})(k_2-1+\delta_{k_1+1,k_2})\nu(k_1+1-\delta_{k_1+1,k_2})
\nu(k_2-1+\delta_{k_1+1,k_2})}{k_1k_2(\nu(k_1)+1+\delta_{k_1,k_2}-\delta_{k_1+1,k_2})(\nu(k_2)+1-\delta_{k_1+1,k_2})},
\end{aligned}
\end{equation}
and, for all $k_1\in [N-1]$,
\begin{equation}
\label{e:019*}
\begin{aligned}
\frac{\pi^N_{\mathrmtiny{ MED}}\big(\nu+\delta_{k_1}+\delta_{1}-\delta_{k_1+1}\big)}{\pi^N_{\mathrmtiny{ MED}}\big(\nu\big)}
&= \mu\frac{(k_1+1)\nu(k_1+1)}{k_1\big(\nu(k_1)+1+\delta_{k_1,1}\big)\big(\nu(1)+1\big)}.
\end{aligned}
\end{equation}
Thus, indeed,
\begin{equation}
\label{f:018}
\begin{aligned}
&\sum_{\tilde{\nu}\in{\mathcal N}_N(\mathbb{N})}\pi^N_{\mathrmtiny{ MED}}\big(\tilde{\nu}\big)\,q(\tilde{\nu},\nu)\\
&= \pi^N_{\mathrmtiny{ MED}}(\nu)\Big\{\nu(1)\sum_{k_0=2}^N(k_0-1)\big(\nu(k_0-1)-\delta_{2,k_0}\big)\\
&\quad + \sum_{k_1=1}^{N-1}\sum_{k_2=2}^N(k_1+1-\delta_{k_1+1,k_2})(k_2-1+\delta_{k_1+1,k_2})
\nu(k_1+1-\delta_{k_1+1,k_2})\nu(k_2-1+\delta_{k_1+1,k_2})\\
&\quad + \mu\sum_{k_1=1}^{N-1}(k_1+1)\nu(k_1+1)\Big\}\\
&= \pi^N_{\mathrmtiny{ MED}}(\nu)\Big\{\nu(1)N\,\mathbf{1}\{\nu(N)=0\}-\nu(1)\big(\nu(1)-1\big) +\mu\big(N-\nu(1)\big)\\
&\quad +\sum_{k_1=2}^{N} k_1(k_1-1)\nu(k_1)\nu(k_1-1)
+ \sum_{k_1=2}^{N} k_1\nu(k_1) \sum_{k_2=2,k_2\not=k_1}^N(k_2-1) \nu(k_2-1)\Big\}\\
&= \pi^N_{\mathrmtiny{ MED}}(\nu)\Big\{\nu(1)N\,\mathbf{1}\{\nu(N)=0\}-\nu(1)\big(\nu(1)-1\big) + \mu\big(N-\nu(1)\big)\\
&\quad + \sum_{k_1=2}^{N} k_1(k_1-1)\nu(k_1)\nu(k_1-1) +N\,\mathbf{1}\{\nu(N)=0\}\big(N-\nu(1)\big)\\
&\quad - \sum_{k_1=2}^{N} k_1(k_1-1)\nu(k_1)\nu(k_1-1)\Big\}\\
&= \pi^N_{\mathrmtiny{ MED}}(\nu)\bar{q}_{\nu}.
\end{aligned}
\end{equation}
Since $Z$ is irreducible, $\pi^N_{\mathrmtiny{ MED}}$ is the unique stationary distribution.
\end{proof}

The Moran model with resampling is linked to the poaching model with self-employment as follows.

\begin{proposition}[Link to $N$-graph dynamics]
Fix $N\in\mathbb{N}$. Given $G\in\mathbb{G}_{\mathrmtiny{ complete}}^N$ and $y=(y_1,\ldots,y_N)\in \mathbb{K}^N$ with $g(G)=h(y)$, the poaching model with self-employment $X$ starting in $X_0=G$ can be coupled with the resampling model with mutation $Y$ starting in $Y_0=y$ so that
\begin{equation}
\label{f:183}
g(G_t) = h(Y_t), \qquad t \geq 0.
\end{equation}
\label{P:011}
\end{proposition}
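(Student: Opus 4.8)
The plan is to realise both processes on one probability space driven by a single common family of Poisson clocks, using the fact that on $\mathbb{G}^N_{\mathrmtiny{ complete}}$ the poaching-with-self-employment dynamics is, after the vertex labels are forgotten, the very same partition dynamics as the one undergone by the type classes of $Y$.

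First I would record the structural reduction. The map $G\mapsto\mathrm{Comp}(G)$ identifies $\mathbb{G}^N_{\mathrmtiny{ complete}}$ with the set of partitions of $[N]$, and this set is invariant under $X$: by the definition of $\mathbb{G}^N_{\mathrmtiny{ complete}}$ the neighbours of a vertex $v$ are exactly its clique minus $v$, so the poaching of $v'$ by $v$ detaches $v'$ from its clique and inserts it into the clique of $v$, while the self-employment of $v$ removes $v$ from its clique and creates the singleton $\{v\}$. Writing $\mathcal{P}_t:=\mathrm{Comp}(X_t)$ and letting $\mathcal{Q}_t$ be the partition of $[N]$ induced by $i\sim j\iff(Y_t)_i=(Y_t)_j$, the move ``$v$ poaches $v'$'' transfers the element $v'$ from its $\mathcal{P}_t$-block into the block of $v$, the self-employment of $v$ isolates $v$, the resampling operator $\theta_{i,j}$ transfers $j$ from its $\mathcal{Q}_t$-block into the block of $i$, and a mutation $\kappa^y_i$ whose new type $y$ differs from all currently present types isolates $i$; moreover $g(X_t)$ and $h(Y_t)$ are, respectively, the multiset of block sizes of $\mathcal{P}_t$ and of $\mathcal{Q}_t$.

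Since $g(G)=h(y)$, the partitions $\mathrm{Comp}(G)$ and the type classes of $y$ have the same block-size multiset, so I fix once and for all a bijection $\phi\colon[N]\to[N]$ mapping each type class of $y$ onto a component of $G$. I then drive the coupling by independent exponential clocks: one rate-$1$ clock for every ordered pair $(i,j)$ with $i\ne j$, and one rate-$\mu$ clock for every $i\in[N]$. When the clock of $(i,j)$ rings, $Y$ performs $\theta_{i,j}$ and $X$ simultaneously performs the poaching of $\phi(j)$ by $\phi(i)$; when the clock of $i$ rings, $Y$ performs a mutation at $i$ and $X$ simultaneously makes $\phi(i)$ self-employed. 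For $Y$'s mutation I would draw the new type uniformly on $[0,1]$ with the finitely many currently present types removed; since a Lebesgue-null set is deleted this does not change the law of $Y$, but it makes the new type surely distinct from all present ones. Because $\phi$ is a bijection, $(i,j)\mapsto(\phi(i),\phi(j))$ enumerates all ordered pairs and $i\mapsto\phi(i)$ all of $[N]$, so the $X$-marginal is exactly the poaching model with self-employment started at $G$ and the $Y$-marginal is exactly the resampling model with mutation started at $y$; the coupled process has constant total jump rate $N(N-1)+N\mu$, hence makes only finitely many jumps in any bounded time interval.

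Finally I would prove, by induction over the jump times, that $\phi(\mathcal{Q}_t)=\mathcal{P}_t$ for every $t\ge0$. This holds at $t=0$ by the choice of $\phi$, and it is preserved at every jump because the two sides apply $\phi$-corresponding elementary operations to their partitions — ``transfer $\phi(j)$ (resp.\ $j$) into the block of $\phi(i)$ (resp.\ $i$)'' at a poaching/resampling clock and ``isolate $\phi(i)$ (resp.\ $i$)'' at a self-employment/mutation clock — and block-transfer and isolation operations are equivariant under the relabelling $\phi$. Applying the size-preserving bijection $\phi$ to $\mathcal{Q}_t$ then produces $\mathcal{P}_t$, so these two partitions have equal block-size multisets, i.e.\ $g(X_t)=h(Y_t)$ for all $t\ge0$, which is the assertion. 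The only genuinely delicate point is the bookkeeping at the degenerate jumps — a poaching or resampling whose two vertices already share a block, or a self-employment or mutation of an already isolated vertex — together with the null event of a fresh mutation type repeating an old one; both are harmless given the sampling convention above, so I expect no real analytic obstacle, only the care needed to check that the partition-level dynamics of the two models coincide exactly.
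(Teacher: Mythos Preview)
Your proof is correct and follows essentially the same route as the paper: both couple the two processes through a single family of independent rate-$1$ Poisson clocks for ordered pairs and rate-$\mu$ clocks for single indices, match the initial partitions by a bijection between the vertex set and $[N]$ (your $\phi$ is the paper's $\gamma^{-1}$), and then verify that each shared jump moves the two partitions in $\phi$-corresponding ways. The only cosmetic difference is that the paper defines the graph process as a deterministic functional of $Y$ (edge $\{v,v'\}$ present iff the $Y$-types of $\gamma(v),\gamma(v')$ agree) and then checks this has the poaching law, whereas you run the two chains side by side and maintain the invariant $\phi(\mathcal Q_t)=\mathcal P_t$; these are the same construction phrased in two equivalent ways.
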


\begin{proof}
Let $\Pi^1=\{\Pi^1_{i,j}\colon\,1\le i\not =j\le N\}$ be a family of independent  rate-$1$ Poisson point processes on $[0,\infty)$ indexed by $i\not=j\in[N]$. Further, let $\Pi^2$ be a Poisson point process on $[N]\times[0,\infty)$ with intensity measure $n\otimes\mu\mathrm{d}t$, where $n$ denotes the counting measure on $\mathbb{N}$ and $\mathrm{d}t$ the Lebesgue measure on $[0,\infty)$. Think of $\Pi^1_{ij}$ as putting an atom at those times, at which individual $i$ replaces the type of individual $j$ by its own type, and of $\Pi^2$ as putting an atom at $(i,t)$ if a mutation event involving the individual $i$ happens at time $t$. Moreover, let $K=\{K_i(n)\colon \,n\in\mathbb{N},i\in[N]\}$ be of family of independent $[0,1]$-valued random variables uniformly distributed on $[0,1]$. Think of $K_i(n)$ as the type of individual $i$ between its $n^{\mathrm{th}}$ and $(n+1)^{\mathrm{st}}$ mutation. Let $(\Omega,{\mathcal A},\mathbb{P})$ be a probability space on which $\Pi^1$, $\Pi^2$ and $K$ are defined.

We next define the poaching model with self-employment and the Moran model with mutation on  $(\Omega,{\mathcal A},\mathbb{P})$ as solutions of an SED driven by $\Pi^1$, $\Pi^2$ and $K$. For that we think of a tuple $(i,t) \in [N]\times[0,\infty)$ as the individual (or vertex) $i$ at time $t$, and we denote by $\mathrm{pr}_{\mathrmtiny{ ind}}$ and $\mathrm{pr}_{\mathrmtiny{ time}}$ the maps that send $(i,j)$ to its individual name and time, respectively. As usual, we can use the above point processes to define, for all $0\le s\le t$ and $i,j\in[N]$, an ancestral relationship. For that, we declare $(i,s)$ to be an ancestor of $(j,t)$ if and only if there exist finitely many times $s=t_0 \le t_1 \le t_2 \le \cdots \le t_n \le t_{n+1}=t$ and finitely many $i=i_0,i_1,\ldots,i_n,i_{n+1}=j\in[N]$ such that $\Pi^{1}_{i_{l-1},i_{l}}(\{t_l\})=1$ for all $l \in [n+1]$ and $\Pi^2(\{i_l\}\times(t_l,t_{l+1}))=0$ for all $l \in \{0,\ldots,n\}$. We refer to $(i,s)$ as the earliest ancestor of $(j,t)$ in $[0,\infty)$ if $s\le s'$ for all other ancestors $(i',s')$ of $(j,t)$.

We start with the Moran model with mutation. Define, for $i\in\mathbb{N}$ and $t \ge 0$,
\begin{equation}
\label{f:184}
\tilde{Y}^N_i(t) := 
\begin{cases}
y_{A_{[0,t]}(i,t)}, &\text{if } \mathrm{pr}_{\mathrmtiny{ time}}(A_{[0,t]}(i,t))=0,\\[1mm]
K_{\mathrm{pr}_{\mathrmtiny{ ind}}(A_{[0,t]}(i,t))}(n),
&\text{if } \Pi^2(\{\mathrm{pr}_{\mathrmtiny{ ind}}(A_{[0,t]}(i,t))\} \times [0,\mathrm{pr}_{\mathrmtiny{ time}}(A_{[0,t]}(i,t))])=n.
\end{cases}
\end{equation}
In particular, $\tilde{Y}^N_i(0)=y_i$. Clearly, $\tilde{Y}^N=(\tilde{Y}^N_i(t))_{t \ge 0}$ is a Moran model with mutation.

Next, for $t \ge 0$ define the graph $\tilde{G}_t=(V,E_t)\in\mathbb{G}^N_{\text{complete}}$ by choosing an arbitrary isomorphism between $V$ and $[N]$ w.r.t.~$G=(V,E)$, i.e., a bijection $\gamma\colon\,V\to[N]$ such that $y_{\gamma^{-1}(v)}=y_{\gamma^{-1}(v')}$ whenever $\{v,v'\}\in E$. This is possible because $\# V=N$ and $g(G)=h(y)$ by assumption. For $t\ge 0$, let
\begin{equation}
\label{f:185}
E_t := \big\{\{v,v'\}\colon\,\,v,v'\in V,v\not=v',Y_{\gamma^{-1}(v)}(t)=Y_{\gamma^{-1}(v')}(t)\big\}.
\end{equation}
In particular, $E_0:=\{\{v,v'\}\colon\,\,v,v'\in V,v\not=v',y_{\gamma^{-1}(v)}=y_{\gamma^{-1}(v')}\}$. It follows immediately that
(\ref{f:183}) holds for all $t\ge 0$, and so it remains to show that $\tilde{G}=((V,E_t))_{\ge 0}$ is the poaching model with self-employment.

Note that the edge set process $(E_t)_{t\ge 0}$ jumps at the same times as the Moran model with mutation $(\tilde{Y}^N_t)_{t\ge 0}$. Moreover, as the latter is a pure-jump process, we can track the different jumps separately. If $\Pi^1_{i,j}(\{t\})=1$ for some $i\not=j\in[N]$, then $\tilde{Y}^N(t-)$ jumps to $\theta_{i,j}(\tilde{Y}^N(t-))$ (with $\theta_{i,j}$ as introduced in (\ref{f:006})). This jump happens at rate $1$ and translates into a jump from $(V,E_{t-})$ to $(V,E_{t-})^{(\gamma^{-1}(i),\gamma^{-1}(j))}$, i.e., the graph built from $(V,E_{t-})$  by $\gamma^{-1}(i)$ poaching $\gamma^{-1}(j)$ (compare with \eqref{f:001}). On the other hand, if $\Pi^2(\{(i,t)\})=1$ and $\Pi^2(\{i\}\times[0,t])=n$ for some $i\in[N]$ and $n\in\mathbb{N}$, then $\tilde{Y}^N(t-)$ jumps to $\kappa^{K_i(n)}_{i}(\tilde{Y}^N(t-))$ (with $\kappa^K_{i}(y)$ as introduced in (\ref{f:007})). This jump happens with rate $\mu$ and translates into a jump from $(V,E_{t-})$ to $(V,E_{t-})^{\gamma^{-1}(i)}$, i.e., the graph built from $(V,E_{t-})$  by letting $\gamma^{-1}(i)$ leave its component and start a singleton component  (compare with \eqref{f:001}). This implies that $(\tilde{G}_t)_{t \ge 0}$ is indeed the poaching model with self-employment. As $\tilde{G}_0=G\in\mathbb{G}^N_{\mathrmtiny{ complete}}$ and $\mathbb{G}^N_{\mathrmtiny{ complete}}$ is invariant under the dynamics, it follows also that (\ref{f:183}) holds.
\end{proof}

We close this section with a proof of Proposition~\ref{P:001} stated in the form of a Corollary~\ref{Cor:001}.

\begin{corollary}[Proof of Proposition~\ref{P:001}]
If $X$ starts in $G_0\in\mathbb{G}^N$ and $Y^N$ starts in $y\in\mathbb{K}^N$, then ${\mathcal L}(X_t)\Tto{\mathcal L}(G^N_{\mathrmtiny{ MED}})$.
\label{Cor:002}
\end{corollary}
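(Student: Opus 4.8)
The plan is to transport the long-time behaviour of the frequency spectrum of the Moran model with mutation to the graph chain $X$ via the pathwise coupling of Proposition~\ref{P:011}. First I would dispose of the Moran side: by Lemma~\ref{L:003} the process $Z=(h(Y_t))_{t\ge 0}$ is a Markov chain on the \emph{finite} state space ${\mathcal N}^N(\mathbb{N})$, and by Lemma~\ref{L:001} it has $\pi^N_{\mathrmtiny{ MED}}$ as its unique stationary distribution; since a finite-state continuous-time Markov chain with a unique stationary law converges to it from every initial state, we get ${\mathcal L}(h(Y_t))\Tto\pi^N_{\mathrmtiny{ MED}}$ for every $y\in\mathbb{K}^N$.

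Next I would treat the graph chain started in $\mathbb{G}^N_{\mathrmtiny{ complete}}$. Fix $G\in\mathbb{G}^N_{\mathrmtiny{ complete}}$ and choose $y\in\mathbb{K}^N$ with $h(y)=g(G)$, which is possible since $g(G)\in{\mathcal N}^N(\mathbb{N})$. By Proposition~\ref{P:011} there is a coupling of $X$ (started at $G$) with $Y$ (started at $y$) such that $g(X_t)=h(Y_t)$ for all $t\ge 0$, so by the first step ${\mathcal L}_G(g(X_t))\Tto\pi^N_{\mathrmtiny{ MED}}$. Since $\mathbb{G}^N_{\mathrmtiny{ complete}}$ is invariant under the dynamics and, on $\mathbb{G}^N_{\mathrmtiny{ complete}}$, the isomorphism class $[G']$ is determined by $g(G')$ (the remark preceding \eqref{f:188}), the map $g$ descends to a bijection between complete-component isomorphism classes and ${\mathcal N}^N(\mathbb{N})$; hence ${\mathcal L}_G([X^N_t])\Tto{\mathcal L}\big([G^N_{\mathrmtiny{ MED}}]\big)$.

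For an arbitrary initial state $G_0\in\mathbb{G}^N$, I would use the observation recorded after Proposition~\ref{P:012}: every $G\notin\mathbb{G}^N_{\mathrmtiny{ complete}}$ is transient and the chain is irreducible on $\mathbb{G}^N_{\mathrmtiny{ complete}}$, so since $\mathbb{G}^N$ is finite the chain enters $\mathbb{G}^N_{\mathrmtiny{ complete}}$ almost surely in finite time. Applying the strong Markov property at this hitting time together with the previous step (equivalently: $([X^N_t])_{t\ge 0}$ is a finite Markov chain with a single recurrent class on which it is irreducible, hence has a limit law independent of the initial state) yields ${\mathcal L}_{G_0}([X^N_t])\Tto{\mathcal L}\big([G^N_{\mathrmtiny{ MED}}]\big)$, which is Proposition~\ref{P:001}. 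The labelled version in Corollary~\ref{Cor:002} follows since $\Omega^N_{\mathrmtiny{ graph}}$ commutes with relabelling of vertices, so the stationary law on $\mathbb{G}^N$ is relabelling-invariant — it equals the explicit $\pi^N_{\mathrmtiny{ graph}}$ of \eqref{f:189}, whose pushforward under $g$ is $\pi^N_{\mathrmtiny{ MED}}$ by \eqref{f:188} and \eqref{f:010a} — giving ${\mathcal L}_{G_0}(X^N_t)\Tto{\mathcal L}(G^N_{\mathrmtiny{ MED}})$.

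There is no analytic difficulty here: everything reduces to ergodicity of irreducible finite-state Markov chains, already packaged in Lemmas~\ref{L:003} and~\ref{L:001} and in Proposition~\ref{P:011}. The only real work is bookkeeping — verifying that the hypotheses of the coupling in Proposition~\ref{P:011} (initial state in $\mathbb{G}^N_{\mathrmtiny{ complete}}$, matched frequency spectra) can always be arranged, which is precisely what forces the reduction to $\mathbb{G}^N_{\mathrmtiny{ complete}}$ via transience, and identifying the limit through the bijection $g\colon\,\mathbb{G}^N_{\mathrmtiny{ complete}}/{\sim}\,\to{\mathcal N}^N(\mathbb{N})$, where exchangeability of the dynamics is used to pin down the labelled equilibrium law $\pi^N_{\mathrmtiny{ graph}}$.
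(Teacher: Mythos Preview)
Your proposal is correct and follows essentially the same route as the paper: use the coupling of Proposition~\ref{P:011} to transfer the ergodicity of the frequency-spectrum chain (Lemmas~\ref{L:003} and~\ref{L:001}) to $g(X_t)$ for initial states in $\mathbb{G}^N_{\mathrmtiny{ complete}}$, then extend to arbitrary initial states via the fact that the chain enters $\mathbb{G}^N_{\mathrmtiny{ complete}}$ in finite time. Your treatment is in fact more careful than the paper's, which stops at convergence of $g(X_t)$ and leaves implicit both the passage to isomorphism classes (via the bijection $g$ on $\mathbb{G}^N_{\mathrmtiny{ complete}}/{\sim}$) and the final step to the labelled law via exchangeability that you spell out.
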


\begin{proof}
Let $X$ start in $G\in\mathbb{G}^N_{\mathrmtiny{ complete}}$ and $\tilde{Y}^N$ start in $y\in\mathbb{K}^N$ with $g(G)=h(y)$. Then, since $\tilde{Y}^N$ is irreducible, for all $\nu\in{\mathcal N}^N(\mathbb{N})$,
\begin{equation}
\label{f:186}
\begin{aligned}
\mathbb{P}_{g(G)}\big(g(\tilde{G}(t))=\nu\big)=\mathbb{P}_{h(y)}\big(h(\tilde{Y}^N(t))=\nu\big)
\Tto\pi^N_{\mathrmtiny{ MED}}\big(\{\nu\}\big).
\end{aligned}
\end{equation}
As the poaching model with self-employment enters $\mathbb{G}^N_{\mathrmtiny{ complete}}$ in finite time and does not leave $\mathbb{G}^N_{\mathrmtiny{ complete}}$ afterwards, (\ref{f:186}) extends to initial $N$-graphs $G\in\mathbb{G}^N$, i.e.,
\begin{equation}
\label{f:186b}
\begin{aligned}
\mathbb{P}_{g(G)}\big(g(\tilde{G}(t))=\nu\big)\Tto\pi^N_{\mathrmtiny{ MED}}\big(\{\nu\}\big).
\end{aligned}
\end{equation}
\end{proof}


\section{Diffusions for graphons and graphemes}
\label{S:graphemes}

In this section, we provide preliminaries on graphons that arise as limits of dense finite graphs as the number of vertices tends to infinity (Subsection~\ref{Sub:graphons}). These graphons often show features of random graphs. For the purpose of the present paper, we are only interested in the subspace of those graphons that are non-random, and that will be referred to as graphemes (Subsection~\ref{Sub:graphemes}).


\subsection{The space of graphons}
\label{Sub:graphons}

Recall that an $N$-graph $G=(V,E)$ is an undirected graph with $\# V=N$. A class of functionals that allows us to evaluate $N$-graphs is the following class of {\em subgraph densities} (compare also with (\ref{f:122})).

\begin{definition}[Subgraph density]
{\rm For $N\in\mathbb{N}$, let $G=(V,E)$ be an $N$-graph. Consider a finite graph $F=(V_F,E_F)$. The \textit{$F$-subgraph density $\Phi^{F}$} evaluates the relative frequency of $F$ as a subgraph of $G$, i.e.,
\begin{equation}
\label{f:128}
\begin{aligned}
&\Phi^F(G) := \frac{\mathbf{1}\{\mathrm{inj}(V_F,V_G)\not=\emptyset\}}{\#\mathrm{inj}(V_F,V_G)}\\
&\times \sum_{\phi\in\mathrm{inf}(V_F,V_G)}\prod_{e=(v,v')\in E_F}\mathbf{1}_{E_G}\big(\{\phi(v),\phi(v')\}\big)
\prod_{e=(v,v')\in\complement E_F}\,\mathbf{1}_{\complement E_G}\big(\{\phi(v),\phi(v')\}\big),
\end{aligned}
\end{equation}
where $\mathrm{inj}(V,V')$ denotes the set of all injective maps from $V$ to $V'$ and $\complement E$ denotes the set of all two-point sets that are not connected by an edge.}\hfill$\spadesuit$
\label{Def:002}
\end{definition}

\begin{remark}[Subgraph counts]
{\rm Dense graph limit theory usually relies on subgraph counts that are defined via an injective homomorphism and lead to test functions of the form: for $N\in\mathbb{N}$ and $F\in\mathbb{G}^k$ with $k\ge N$,
\begin{equation}
\label{f:128b}
\begin{aligned}
\Psi^F(G) &:= \frac{\mathbf{1}\{\mathrm{inj}(V_F,V_G)\not=\emptyset\}}{\#\mathrm{inj}(V_F,V_G)}
\sum_{\gamma\in\mathrm{inf}(V_F,V_G)}\prod_{e=(v,v')\in E_F}\mathbf{1}_{E_G}\big(\{\gamma(v),\gamma(v')\}\big).
\end{aligned}
\end{equation}
Note that the class of test functions of the form (\ref{f:128}) and (\ref{f:128b}) generate the same algebra, i.e., each of them can be expressed as a linear combination of finitely many of the others.}\hfill$\spadesuit$
\label{Rem:002}
\end{remark}

Recall further from (\ref{f:187}) that we consider two finite graphs $G=(V,E)$ and $G'=(V',E')$ as equivalent if and only if  there is an isomorphism $\gamma\colon\,V\to V'$ with $\{v,w\}\in E$ if and only if  $\{\gamma(v),\gamma(w)\}\in E'$.

\begin{lemma}[On isomorphisms and subgraph densities]
Two finite graphs $G=(V,E)$ and $G'=(V',E')$ are equivalent iff  $\Phi^F(G)=\Phi^F(G')$ for all finite graphs $F$.
\label{L:004}
\end{lemma}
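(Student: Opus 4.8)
The plan is to prove both implications of the stated equivalence. The forward direction is immediate: if $\gamma\colon V\to V'$ is a graph isomorphism, then for every finite graph $F$ and every injection $\phi\in\mathrm{inj}(V_F,V)$ the composition $\gamma\circ\phi$ is an injection in $\mathrm{inj}(V_F,V')$, and this sets up a bijection between $\mathrm{inj}(V_F,V)$ and $\mathrm{inj}(V_F,V')$ under which the edge-indicator product in \eqref{f:128} is preserved (since $\{\phi(v),\phi(v')\}\in E_G$ iff $\{\gamma\phi(v),\gamma\phi(v')\}\in E_{G'}$, and likewise for non-edges). Hence every term in the defining sum matches, the normalising cardinalities $\#\mathrm{inj}(V_F,V_G)$ and $\#\mathrm{inj}(V_F,V_{G'})$ agree, and $\Phi^F(G)=\Phi^F(G')$.

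For the converse, the key observation is that $\Phi^F(G)$ already contains the information needed to recover $G$ up to isomorphism. First I would note that $\Phi^F(G)\neq 0$ for some $F$ with $\#V_F=\#V_G$ forces $\#V_{G'}\ge\#V_F$, and symmetrically, so from $\Phi^F(G)=\Phi^F(G')$ for all $F$ we deduce $\#V_G=\#V_{G'}=:n$ (taking $F=G$ itself: $\Phi^{G}(G)>0$ because the identity map contributes a nonzero term, so $\Phi^{G}(G')>0$, which requires $\#V_{G'}\ge n$; symmetry gives equality). Now take $F$ ranging over graphs on exactly $n$ vertices: for such $F$, $\mathrm{inj}(V_F,V_G)$ consists of the $n!$ bijections, and $\Phi^F(G)>0$ if and only if some bijection $\phi\colon V_F\to V_G$ carries $E_F$ exactly onto $E_G$ and $\complement E_F$ onto $\complement E_G$, i.e.\ if and only if $F\sim G$. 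Thus $\Phi^F(G)>0\iff F\sim G$ and $\Phi^F(G')>0\iff F\sim G'$; equality of these densities for all $F$ on $n$ vertices then gives $F\sim G\iff F\sim G'$ for all such $F$, and choosing $F=G$ yields $G\sim G'$.

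I would present the argument in that order: (i) the easy forward implication via the bijection on injections; (ii) matching vertex counts using $F=G$; (iii) restricting to $F$ on $n$ vertices, where the density detects exactly the isomorphism class; (iv) concluding $G\sim G'$. No serious obstacle is expected — the only mild subtlety is making the vertex-count step rigorous, i.e.\ being careful that $\Phi^F$ as defined in \eqref{f:128} is nonzero precisely when an isomorphic copy of $F$ sits inside $G$ as an \emph{induced} subgraph (this is why \eqref{f:128} uses both the $E_G$ and $\complement E_G$ products, unlike the homomorphism-count version $\Psi^F$ in \eqref{f:128b}), so that for $\#V_F=\#V_G$ positivity of $\Phi^F(G)$ is equivalent to $F\sim G$ rather than merely to $F$ embedding into $G$. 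Once that induced-subgraph reading is fixed, the rest is bookkeeping.
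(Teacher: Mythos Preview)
Your proof is correct and follows essentially the same approach as the paper's: both argue the forward direction via the bijection $\phi\mapsto\gamma\circ\phi$ on injections, and both establish the converse by taking $F=G$ and reading $\Phi^G(G')>0$ as the existence of an induced embedding of $G$ into $G'$. Your treatment is in fact slightly more careful than the paper's, which does not spell out the vertex-count step (your (ii)) and jumps directly from $\Phi^G(G')>0$ to the existence of an isomorphism; your symmetry argument using $F=G'$ to force $\#V_G=\#V_{G'}$ makes this rigorous.
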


\begin{proof}
{\em ``$\Rightarrow$''}
Assume first that $G=(V,E)$ and $G'=(V',E')$ are equivalent. Then there exists an isomorphism $\gamma\colon\,V\to V'$ with $\{v,w\}\in E$ if and only if $\{\gamma(v),\gamma(w)\}\in E'$. Fix a finite graph $F$. Then, $\# V=\# V'$, and thus $\#\mathrm{inj}(V_F,V)=\#\mathrm{inj}(V_F,V')$. Therefore,
\begin{equation}
\label{f:191}
\begin{aligned}
&\Psi^F(G')\\
&= \frac{\mathbf{1}\{\mathrm{inj}(V_F,V')\not=\emptyset\}}{\#\mathrm{inj}(V_F,V')}\sum_{\phi'\in\mathrm{inj}(V_F,V')}
\prod_{e=(v,v')\in E_F} \mathbf{1}_{E'}\big(\{\phi'(v),\phi'(w)\}\big)\\
&= \frac{\mathbf{1}\{\mathrm{inj}(V_F,V)\not=\emptyset\}}{\#\mathrm{inj}(V_F,V)}\sum_{\phi'\in\mathrm{inf}(V_F,V')}
\prod_{e=(v,v')\in E_F}\mathbf{1}_{E}\big(\{\gamma^{-1}\circ\phi'(v),\gamma^{-1}\circ\phi'(w)\}\big)\\
&= \frac{\mathbf{1}\{\mathrm{inj}(V_F,V)\not=\emptyset\}}{\#\mathrm{inj}(V_F,V)}\sum_{\phi\in\mathrm{inf}(V_F,V)}\prod_{e=(v,v')\in E_F}\mathbf{1}_{E}\big(\{\phi(v),\phi(w)\}\big)\\
&= \Psi^F(G),
\end{aligned}
\end{equation}
where we use that every $\phi\in\mathrm{inj}(V_F,V)$ is of the form $\phi=\gamma^{-1}\circ\phi'$ for some unique $\phi'\in\mathrm{inj}(V_F,V')$. The same argument shows that $\Phi^F(G)=\Phi^F(G')$ for all finite graphs $F$.

{\em ``$\Leftarrow$'' }
Assume that $G=(V,E)$ and $G'=(V',E')$ are two finite graphs such that $\Phi^F(G)=\Phi^{F}(G')$ for all finite graphs $F$. Choose $F:=G$. Then $\mathrm{inj}(V,V)$ contains the identity and therefore does not equal the empty set. Moreover, all $\phi\in\mathrm{inj}(V,V)$ are isomorphisms between $G$ and $G$. As $\Phi^G(G)=\Phi^{G}(G')$, it follows that there exists an isomorphism between $V$ and $V'$, as claimed.
\end{proof}

Recall from (\ref{f:190}) the space $\mathbb{G}^{N,\simeq}$ of isomorphism classes, and put
\begin{equation}
\label{f:131}
\mathbb{G}^{\simeq}:=\bigcup_{N\in\mathbb{N}}\mathbb{G}^{N,\simeq}.
\end{equation}
Since subgraph densities are separating points in $\mathbb{G}^{\simeq}$, we can use them to induce a notion of convergence.

\begin{definition}[Convergence of subgraph densities]
{\rm Given a sequence $(G_n)_{n\in\mathbb{N}}$ in $\mathbb{G}^{\simeq}$, we say that \textit{$(G_n)_{n\in\mathbb{N}}$ converges to $G\in\mathbb{G}^{\simeq}$} if and only if $\Phi^F(G_n)\tno\Phi^F(G)$ for all $F\in\mathbb{G}^{\simeq}$.}\hfill$\spadesuit$
\label{Def:008}
\end{definition}

It is not hard to see that $\mathbb{G}^{\simeq}$ is not complete. For example, for $n\in\mathbb{N}$ consider the complete graph $G_n$ with $n$ vertices. Then all subgraph densities are constant and equal to $1$, and therefore converge, while the sequence $(G_n)_{n\in\mathbb{N}}$ cannot converge to an element in $\mathbb{G}^{\simeq}$ as their number of vertices tends to infinity. Nonetheless, the space $\mathbb{G}^{\simeq}$ can be completed by the following standard procedure: consider the family ${\mathcal V} := \{B_k, k\in\mathbb{N}\}$ of sets of the form
\begin{equation}
\label{f:132}
B_k := \big\{(G,G')\in\mathbb{G}^{\simeq}\times\mathbb{G}^{\simeq}\text{ with }\Phi^F(G)=\Phi^F(G'),\,\forall F\in \mathbb{G}^{k,\simeq}\big\}.
\end{equation}
Then, ${\mathcal V}$ is the base of a uniformity ${\mathcal U}_{\mathbb{G}^{\simeq}}$ on $\mathbb{G}^{\simeq}\times\mathbb{G}^{\simeq}$. As usual, if $(H_1,{\mathcal U}_1)$ and $(H_2,{\mathcal U}_2)$ are two uniform spaces, then we refer to a function $\Psi\colon\,H_1\to H_2$ as {\em uniformly continuous} iff $\Phi^{-1}(U_{2})\subseteq {\mathcal U}_1$ for all $U_{2}\in{\mathcal U}_2$, i.e., iff for all $U_2\in{\mathcal U}_2$ there exists $U_{1}\in{\mathcal U}_{1}$ such that $(G,G')\in U_{1}$ implies $(\Psi(G),\Psi(G'))\in U_2$. As with metric spaces, $\mathbb{G}^{\simeq}$ as a uniform space has a {\em Hausdorff completion}, i.e., there exists a complete Hausdorff uniform space $\widetilde{\mathbb{G}}^{\simeq}$ and a topological embedding ${\mathcal I}\colon\,\mathbb{G}^{\simeq}\to\widetilde{\mathbb{G}}^{\simeq}$ such that, for all uniformly continuous mappings $\Psi$ from $\mathbb{G}^{\simeq}$ into a uniform space $\mathbb{H}$, there exists a uniformly continuous map $\widetilde{\Psi}\colon \widetilde{\mathbb{G}}^{\simeq}\to\mathbb{H}$ with $\Psi=\widetilde{\Psi}\circ{\mathcal I}$.

Let $\widetilde{\mathbb{G}}^{\simeq}$ be the completion of $\mathbb{G}^{\simeq}$. In the theory of dense graph limits introduced by \cite{LovaszSzegedy2006} and further developed in \cite{BorgsChayesLovaszSosVesztergombi2008}, the equivalence classes in $\widetilde{\mathbb{G}}^{\simeq}$ are represented by {\em graphons} (see \cite[Theorem~9.1]{Janson2013}).

\begin{definition}[Graphons]
{\rm A \textit{graphon} is a quadruple $(\Omega,\tau,\mu,W)$ consisting of a
\begin{itemize}
\item Polish space $(\Omega,\tau)$,
\item probability measure $\mu$ on ${\mathcal B}(\Omega)$,
\item symmetric and Borel measurable function $W\colon\,\Omega^2\to[0,1]$.
\end{itemize}
Two graphons $(\Omega,\tau,\mu,W)$ and $(\Omega,\tau,\mu,W')$ are identified when $W=W'$ $\mu$-almost surely.  We refer to ${\mathcal W}(\Omega,\tau,\mu)$ as the set of graphons on $(\Omega,\tau,\mu)$.}\hfill$\spadesuit$
\label{f:133}
\end{definition}

\begin{remark}[Graphons as square-map]
{\rm Often the probability space $(\Omega,\tau,\mu)$ is taken to be $[0,1]$ together with the Euclidian topology and the Lebesgue measure $\lambda$ on $[0,1]$. In fact, as $W$ is a symmetric and Borel measurable function $W\colon\,\Omega^2\to[0,1]$, there exists a symmetric Borel measurable function $\tilde{W}\colon\,[0,1]^2\to[0,1]$ such that, for all continuous bounded functions $f\colon\,[0,1]\to\mathbb{R}$,
\begin{equation}
\label{f:137}
\int_{\Omega^2}f\big(W(\omega_1,\omega_2)\big)\,\mu^{\otimes 2}(\mathrm{d}(\omega_1,\omega_2))
= \int_{[0,1]^2}f\big(\tilde{W}(x_1,x_2)\big)\,\mathrm{d}(x_1,x_2)
\end{equation}
(\cite[Theorem~2]{DiaconisJanson2008}). Most papers on graph limits only consider this case. However, for our purposes it will be convenient to allow for a more general Borel probability space.}\hfill$\spadesuit$
\label{Rem:001}
\end{remark}

\begin{example}[Finite graphs]
Every $N$-graph $G=(V,E)$ can be represented by a graphon as follows. Let $\Omega=V$ be equipped with the discrete topology $\tau$,
$\mu:=\frac{1}{N}\sum_{v\in V}\delta_v$, and $W(v,v')=\mathbf{1}_E(\{v,v'\})$. To represent the $N$-graph $G$ via the Euclidian interval $[0,1]$ equipped with the Lebesgue measure, partition $(0,1]$ into $N$ intervals $(\frac{i-1}{N},\frac{i}{N}]$, $i \in [N]$. Choose a bijection $\phi\colon\,V\to[N]$, and let
\begin{equation}
\label{f:138}
\tilde{W}(x,y):=\sum_{i,j \in [N], i\not =j}\mathbf{1}_E\big(\{\phi^{-1}(i),\phi^{-1}(j)\}\big)\,
\mathbf{1}_{(\frac{i-1}{N},\frac{i}{N}]\times(\frac{j-1}{N},\frac{j}{N}]}\big((x,y)\big)
.
\end{equation}
The two graphons $W$ and $\tilde{W}$ have the same distribution in the sense of (\ref{f:138}).
\hfill$\qed$
\label{Exp:001}
\end{example}

Given a graphon $W$ on $(\Omega,\tau,\mu)$, we define for $F=(V_F,E_F)\in\mathbb{G}^k$, $k\in\mathbb{N}$, with $V_F=[k]$, the {\em subgraphon density}
\begin{equation}
\label{f:134}
t^F(W):=\int_{\Omega^k} \prod_{(i,j):\,\{i,j\}\in E_F}W(x_i,x_j)
\prod_{(i,j):\,\{i,j\}\in \complement E_F}\big(1-W(x_i,x_j)\big)\,\mu^{\otimes k}(\mathrm{d}(x_1,\ldots,x_k)).
\end{equation}
As before, we consider two graphons $W,W'\in{\mathcal W}$ as equivalent if and only if $t^F(W)=t^F(W')$ for all $F\in\mathbb{G}^{\simeq}$, and define
\begin{equation}
\label{f:135}
{\mathcal W}^{\simeq}:=\text{ set of all equivalence classes of graphons}.
\end{equation}

The following statement was proved in \cite{LovaszSzegedy2006} using Szemer\'{e}di partitions and the martingale convergence theorem. A more probabilistic proof using results from \cite{Hoover1979} and \cite{Aldous1981} can be found in \cite{DiaconisJanson2008}.

\begin{proposition}[Space of graphons]
The space ${\mathcal W}^{\simeq}$ equipped with the notion of convergence of subgraphon densities equals $\widetilde{G}^{\simeq}$, i.e., for every $G\in\widetilde{\mathbb{G}}^{\simeq}$, there is a unique (up to equivalence) graphon $(\Omega,\tau,\mu,W)$ such that, for all $F\in\mathbb{G}^{\simeq}$,
\begin{equation}
\label{f:136}
\Phi^{F}(G)=t^F(W).
\end{equation}
\label{P:003}
\end{proposition}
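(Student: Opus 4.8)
The plan is to identify $\widetilde{\mathbb{G}}^{\simeq}$, the uniform-space completion of $\mathbb{G}^{\simeq}$, with the space ${\mathcal W}^{\simeq}$ of equivalence classes of graphons equipped with convergence of subgraphon densities, and to check that under this identification the embedding ${\mathcal I}$ sends a finite graph $G$ to the graphon described in Example~\ref{Exp:001}. The key point is that the completion is characterised by a universal property, so it suffices to produce \emph{some} complete Hausdorff uniform space receiving $\mathbb{G}^{\simeq}$ as a dense uniformly embedded subspace in a way compatible with the uniformity generated by the sets $B_k$ of \eqref{f:132}; uniqueness of such a completion then does the rest. First I would recall the Lov\'asz--Szegedy theorem (\cite{LovaszSzegedy2006}, with the probabilistic proof in \cite{DiaconisJanson2008}, see also \cite[Theorem~9.1]{Janson2013}): every sequence of finite graphs whose subgraph densities converge has a limit graphon, and a graphon is determined up to equivalence by its subgraphon densities $t^F(W)$, $F\in\mathbb{G}^{\simeq}$. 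Combined with the observation that for a finite graph $G$ one has $\Phi^F(G)=t^F(W_G)$ with $W_G$ the associated graphon (a direct computation comparing \eqref{f:128} with \eqref{f:134}, noting that the empirical measure $\frac1N\sum\delta_v$ on $N$ points picks out exactly the injective tuples with the leading combinatorial factor), this says precisely that $\mathbb{G}^{\simeq}$ embeds densely into ${\mathcal W}^{\simeq}$ with dense image.

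The steps in order: (1) Record that the map $G\mapsto W_G$ of Example~\ref{Exp:001} intertwines $\Phi^F$ and $t^F$ for all $F$; hence, pulling back the uniformity on ${\mathcal W}^{\simeq}$ generated by the ``agreement of all $t^F$ with $F\in\mathbb{G}^{k,\simeq}$'' sets gives back exactly ${\mathcal U}_{\mathbb{G}^{\simeq}}$, so ${\mathcal I}$ extends to a uniformly continuous injection $\mathbb{G}^{\simeq}\hookrightarrow{\mathcal W}^{\simeq}$. (2) Density: by the Lov\'asz--Szegedy approximation theorem, every graphon is a limit, in the sense of convergence of all $t^F$, of the graphon sequence associated to a sequence of finite graphs (e.g.\ the $W$-random graphs $\mathbb{G}(n,W)$, whose subgraph densities converge a.s.\ to those of $W$); hence the image of $\mathbb{G}^{\simeq}$ is dense in ${\mathcal W}^{\simeq}$. (3) Completeness and Hausdorffness of ${\mathcal W}^{\simeq}$: Hausdorff because $t^F$, $F\in\mathbb{G}^{\simeq}$, separate points of ${\mathcal W}^{\simeq}$ by definition of the equivalence; complete because a Cauchy net in ${\mathcal W}^{\simeq}$ has, for each fixed $F$, an eventually constant then convergent value $t^F$, and the resulting family of limiting densities is realised by a graphon again by the Lov\'asz--Szegedy representation theorem (every consistent family of subgraph densities satisfying the moment/reflection-positivity constraints arises from a graphon; in our setting we only need the weaker statement that a limit of graph-densities is a graphon-density, which is exactly the compactness half of graph limit theory). (4) Conclude by the uniqueness of the Hausdorff completion: both $\widetilde{\mathbb{G}}^{\simeq}$ and ${\mathcal W}^{\simeq}$ are complete Hausdorff uniform spaces in which $\mathbb{G}^{\simeq}$ sits densely and uniformly-embedded via compatible uniformities, so the universal property yields a unique uniform isomorphism between them fixing $\mathbb{G}^{\simeq}$, which is the asserted identification; \eqref{f:136} is then just the statement that this isomorphism carries $\Phi^F$ to $t^F$.

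The main obstacle is Step (3), the completeness of ${\mathcal W}^{\simeq}$: one must know that \emph{any} Cauchy net (equivalently, any sequence of graphs whose subgraph densities all converge) has its limiting density profile realised by an honest graphon on \emph{some} Polish probability space. This is exactly the content of the Lov\'asz--Szegedy limit theorem, so in the write-up I would quote it rather than reprove it, and the only thing needing care is matching the uniform-space language (nets, entourages $B_k$) against the sequential language in which graph limit theory is usually phrased --- harmless since the uniformity has a countable base $\{B_k\}_{k\in\mathbb{N}}$, hence is pseudometrisable and sequential completeness suffices. A secondary, purely bookkeeping point is to make sure that the leading factor $\mathbf{1}\{\mathrm{inj}(V_F,V_G)\neq\emptyset\}/\#\mathrm{inj}(V_F,V_G)$ in \eqref{f:128} corresponds, under Example~\ref{Exp:001}, to integration against $\mu^{\otimes k}$ with $\mu$ uniform on $N$ atoms up to the $O(1/N)$ discrepancy between sampling with and without replacement --- this discrepancy vanishes in the limit and does not affect the identification of completions, but should be stated cleanly so that \eqref{f:136} holds on the nose for finite $G$ with the density $\Phi^F$ as defined in \eqref{f:128}.
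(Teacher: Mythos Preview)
The paper does not prove this proposition at all: it is stated as a known result, with the sentence immediately preceding the proposition attributing it to \cite{LovaszSzegedy2006} (via Szemer\'edi partitions and martingale convergence) and to the probabilistic proof in \cite{DiaconisJanson2008} building on Hoover--Aldous exchangeability, together with the pointer to \cite[Theorem~9.1]{Janson2013}. There is no argument given in the paper beyond these citations.

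Your proposal is a correct and reasonably complete outline of how one would actually carry out the identification, and it is in the spirit of the cited references: you reduce the question to (i) the intertwining $\Phi^F(G)=t^F(W_G)$ for finite $G$, (ii) density of finite graphs in ${\mathcal W}^{\simeq}$ via $W$-random graphs, (iii) completeness of ${\mathcal W}^{\simeq}$ via the Lov\'asz--Szegedy representation theorem, and (iv) uniqueness of the Hausdorff completion. This is exactly the content the paper is outsourcing to the literature, so in that sense you are supplying a proof where the paper supplies a citation. One small caveat worth tightening in a written version: the entourages $B_k$ in \eqref{f:132} as literally defined demand \emph{exact} equality of all $\Phi^F$ with $\#V_F\le k$, not approximate equality, so the uniformity they generate is somewhat degenerate on $\mathbb{G}^{\simeq}$ itself; when you match this to the graphon side you should make explicit which pseudometric (or countable family of pseudometrics) you are actually completing against, and check that it agrees with the topology of subgraphon-density convergence. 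Your remark about the $O(1/N)$ discrepancy between injective sampling in \eqref{f:128} and i.i.d.\ sampling in \eqref{f:134} is pertinent here and is the only place any real care is needed.
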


We will make use of the fact that the space of graphons is a compact Polish space (\cite[Theorem~5.1]{LovaszSzegedy2007}). One prominent metric metrizing the topology given by the notion of convergence of subgraphon densities is the so-called {\em cut metric} (compare with \cite{BorgsChayesLovaszSosVesztergombi2008,BorgsChayesLovaszSosVesztergombi2012,DiaoGuillotKhareRajaratnam2015}).


\subsection{The space of graphemes}
 \label{Sub:graphemes}

In what follows, we are interested in the subspace of those graphons from which we can read off an adjacency matrix together with a sampling measure on a possibly infinite vertex set.

\begin{definition}[Grapheme]
{\rm A graphon $(\Omega,\tau,\mu,W)\in\mathbb{G}^{\simeq}$ is called a \textit{grapheme} (or random-free graphon) if
$W(\Omega\times\Omega)\subseteq\{0,1\}$.}\hfill$\spadesuit$
\label{Def:009}
\end{definition}

Put
\begin{equation}
\label{f:139}
\mathbb{G}^{\simeq,\{0,1\}}:=\big\{(\Omega,\tau,\mu,W)\in\mathbb{G}^{\simeq}\colon\,W(\Omega\times\Omega)\subseteq\{0,1\}\big\}.
\end{equation}
Note that each $(\Omega,\tau,\mu,W)$ can be identified with a measured graph $G=(V,E,\mu)$ with vertex set $V:=\Omega$, Borel probability measure $\mu$ and $E:=\{\{v,v'\}\colon\,v,v'\in V,W(v,v')=1\}$.

\begin{example}[Complete graph versus (pseudo) random graph]
{\rm Consider once more the sequence $(G_N)_{N\in\mathbb{N}}$ of complete graphs whose number of vertices $N$ goes to infinity. Obviously, $(G_N)_{N\in\mathbb{N}}$ converges to the grapheme represented by $([0,1],\mathrm{d}x,\mathbf{1}_{[0,1]\times[0,1]})$. Consider the graphon represented by
\begin{equation}
\label{f:194}
W_{\frac{1}{2}} := ([0,1],\mathrm{d}x,\frac{1}{2}\mathbf{1}_{[0,1]\times[0,1]}).
\end{equation}
For two independent families $\{X_n;\,n\in\mathbb{N}\}$ and $\{Y_{i,j};\,i\not =j\in\mathbb{N}\}$ of independent random variables, uniform on $[0,1]$, respectively, $\{0,1\}$-valued Bernoulli with success parameter $\frac{1}{2}$, let $V_k:=\{X_1,\ldots,X_k\}$, $\tau_k$ the discrete topology, $\mu_k:=\frac{1}{k}\sum_{i=1}^k\delta_{X_i}$, and $E_k:=\{\{X_i,X_j\}:\,1\le i\not=j\le k,Y_{i,j}=1\}$. Obviously, $\Phi^{F}([(V_N,E_N)])\tNo t^F([W_{\frac{1}{2}}])$ for all finite graphs $F$, almost surely. We can therefore find realisations of the above random variables such that all subgraph densities converge to $W_{\frac{1}{2}}$. However, as we will see later, $W_{\frac{1}{2}}$ is not a grapheme.}\hfill$\spadesuit$
\label{Exp:002}
\end{example}

Example~\ref{Exp:002} shows that the space $\mathbb{G}^{\simeq,\{0,1\}}$ of graphemes is not complete. Since later we want to use $\mathbb{G}^{\simeq,\{0,1\}}$ as the state space of the diffusion limit of the $N$-graph-valued Markov chain, we are in need of a description of the compact sets. We use the following strategy to generalise the construction of Example~\ref{Exp:002} as follows. Let $(\Omega,\tau,\mu,W)$ be a graphon. Define a random graph $G(N,W)$ with vertex set $V=[N]$ by first taking families  $\{X_n\colon\,n\in\mathbb{N}\}$ and $\{Y_{i,j}\colon\,i\not =j\in\mathbb{N}\}$ of independent random variables, uniform on $[0,1]$, respectively, $\{0,1\}$-valued independent with $\mathbb{P}(Y_{i,j}=1 \mid \{X_i,X_j\})=W(X_i,X_j)$, i.e., we first sample $X_1, X_2, \ldots$ at random, and then toss a biased coin for each possible edge. As usual, we define the entropy of the discrete random variable $G(N,W)$ as
\begin{equation}
\label{f:192}
\begin{aligned}
\mathrm{Ent}\big(G(N,W)\big)
&:= -\sum_{F\in\mathbb{G}^N}\mathbb{P}\big(G(N,W)=F\big)\log\big(\mathbb{P}\big(G(N,W)=F\big)\big)\\
&= -\sum_{F\in\mathbb{G}^N}t^F\big([(\Omega,\tau,\mu,W)]\big)\log\big(t^F\big([(\Omega,\tau,\mu,W)]\big)\big).
\end{aligned}
\end{equation}

The following proposition relies on the characterisation of compact sets in the space of (equivalence classes of) graphemes given in \cite[Theorem~10.16]{Janson2013}.

\begin{proposition}[Compact sets]
Let $\Gamma\mathfrak{g}\subseteq\mathbb{G}^{\simeq,\{0,1\}}$ be a family of graphemes. Then, $\Gamma$ is relatively compact in $\mathbb{G}^{\simeq,\{0,1\}}$ if and only for all $\delta>0$ there exists $k_0=k_0(\delta)\in\mathbb{N}_0$ such that, for all $k\ge k_0$,
\begin{equation}
\label{f:173}
\sup_{\mathfrak{g}\in\Gamma}\sum_{F\in\mathbb{G}^k}h\big(t^{F}(\mathfrak{g})\big)\le\delta k^2,
\end{equation}
where $h(x)=-x\log(x)$.
\label{P:010}
\end{proposition}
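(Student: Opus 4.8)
The plan is to deduce Proposition~\ref{P:010} from the characterisation of relatively compact sets of graphemes given in \cite[Theorem~10.16]{Janson2013}. That theorem (in Janson's language of ``step functions'' or ``$W$-random graphs'') asserts that a family $\Gamma$ of random-free graphons is relatively compact in the cut metric \emph{restricted to} the random-free part if and only if the ``per-vertex entropy'' of the sampled graphs $G(k,\mathfrak{g})$ grows sub-quadratically uniformly over $\Gamma$, i.e. $\mathrm{Ent}(G(k,\mathfrak{g})) = o(k^2)$ uniformly in $\mathfrak{g}\in\Gamma$. So the first step is to recall that statement verbatim and identify its hypotheses with ours: our convergence of subgraphon densities is, by Proposition~\ref{P:003} together with the compactness of $(\mathcal{W}^{\simeq},\text{cut metric})$ (\cite[Theorem~5.1]{LovaszSzegedy2007}), exactly the topology in which Janson's theorem is phrased, and ``relatively compact in $\mathbb{G}^{\simeq,\{0,1\}}$'' means ``relatively compact in $\mathcal{W}^{\simeq}$ \emph{with limit points again random-free}'' — which is what Janson characterises.

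Second, I would rewrite the entropy $\mathrm{Ent}(G(k,\mathfrak{g}))$ in the form that appears in \eqref{f:173}. By the second line of \eqref{f:192}, for a grapheme $\mathfrak{g}$ we have
\begin{equation*}
\mathrm{Ent}\big(G(k,\mathfrak{g})\big) = \sum_{F\in\mathbb{G}^k} h\big(t^F(\mathfrak{g})\big),
\end{equation*}
since $h(x) = -x\log x$ and $\{t^F(\mathfrak{g})\colon F\in\mathbb{G}^k\}$ is precisely the probability distribution of $G(k,\mathfrak{g})$ on the finite set $\mathbb{G}^k$ of labelled $k$-graphs (here it is crucial that $W$ takes only the values $0$ and $1$, so there is no extra ``coin-tossing'' randomness beyond the sampling of $X_1,\dots,X_k$, and $t^F(\mathfrak{g})$ genuinely is a probability). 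Hence the quantity $\sum_{F\in\mathbb{G}^k} h(t^F(\mathfrak{g}))$ in \eqref{f:173} \emph{is} $\mathrm{Ent}(G(k,\mathfrak{g}))$, and the condition ``for all $\delta>0$ there is $k_0$ with $\sup_{\mathfrak{g}\in\Gamma}\mathrm{Ent}(G(k,\mathfrak{g}))\le \delta k^2$ for all $k\ge k_0$'' is just the $\varepsilon$–$N$ spelling of ``$\sup_{\mathfrak{g}\in\Gamma}\mathrm{Ent}(G(k,\mathfrak{g})) = o(k^2)$ as $k\to\infty$''. Substituting this identification into Janson's theorem gives exactly the claimed equivalence.

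Third, I would add a short remark justifying why the limit points of a $\Gamma$ satisfying \eqref{f:173} are again \emph{graphemes} (random-free), since $\mathbb{G}^{\simeq,\{0,1\}}$ is not closed in $\mathcal{W}^{\simeq}$ (Example~\ref{Exp:002}) and a priori ``relatively compact in $\mathbb{G}^{\simeq,\{0,1\}}$'' is a stronger requirement than ``relatively compact in $\mathcal{W}^{\simeq}$''. This is where one uses that the entropy functional $W\mapsto \limsup_{k} k^{-2}\,\mathrm{Ent}(G(k,W))$ is lower semicontinuous with respect to convergence of subgraphon densities and is strictly positive on non-random-free graphons — e.g. for $W_{1/2}$ of \eqref{f:194} it equals $\tfrac12\log 2>0$ — so a uniform sub-quadratic bound over $\Gamma$ passes to every limit point and forces it to lie in $\mathbb{G}^{\simeq,\{0,1\}}$; this is again contained in, or an immediate consequence of, \cite[Theorem~10.16]{Janson2013}.

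The main obstacle is \textbf{not} any computation but rather the bookkeeping of translating between the three descriptions of the same space — isomorphism classes of finite graphs completed via subgraph densities ($\widetilde{\mathbb{G}}^{\simeq}$), equivalence classes of graphons ($\mathcal{W}^{\simeq}$ with the cut metric), and Janson's framework — and making sure that ``relative compactness in the subspace of graphemes'' is exactly the notion Janson's compactness criterion addresses, including the closedness issue flagged above. Once the dictionary is set up, the proposition is a direct restatement of \cite[Theorem~10.16]{Janson2013} via the identity $\sum_{F\in\mathbb{G}^k} h(t^F(\mathfrak{g})) = \mathrm{Ent}(G(k,\mathfrak{g}))$ and the $o(k^2)$ reformulation.
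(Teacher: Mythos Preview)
Your proposal is correct and follows essentially the same route as the paper: both reduce the statement to Janson's entropy characterisation of random-free graphons, via the identification $\sum_{F\in\mathbb{G}^k}h(t^F(\mathfrak{g}))=\mathrm{Ent}(G(k,\mathfrak{g}))$ and the compactness of the full graphon space $\widetilde{\mathbb{G}}^{\simeq}$.

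The only packaging difference is that the paper's proof is more terse and only writes out the ``if'' direction explicitly: it takes a sequence in $\Gamma$, extracts a subsequential limit $\mathfrak{g}$ in $\widetilde{\mathbb{G}}^{\simeq}$ by compactness, passes the uniform entropy bound to the limit (using that for fixed $k$ the sum $\sum_{F\in\mathbb{G}^k}h(t^F(\cdot))$ is a finite sum of continuous functions of the subgraph densities), and then invokes \cite[Theorem~10.6]{Janson2013} --- the single-graphon criterion ``$\mathfrak{g}$ is random-free iff $\binom{k}{2}^{-1}\mathrm{Ent}(G(k,\mathfrak{g}))\to 0$'' --- to conclude $\mathfrak{g}\in\mathbb{G}^{\simeq,\{0,1\}}$. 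Your version instead invokes \cite[Theorem~10.16]{Janson2013} as a black-box equivalence, which has the advantage of covering the ``only if'' direction as well (the paper's proof, as written, is silent on that direction). Your third step, on lower semicontinuity forcing limit points to be random-free, is exactly what the paper does with continuity of $t^F$ and Theorem~10.6; either formulation works.
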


\begin{proof}
Fix  a sequence $(\mathfrak{g}_n)_{n\in\mathbb{N}}$ in $\Gamma$. As $\widetilde{G}^{\cong}$ is compact, there exists a subsequence $(n_k)_{k\in\mathbb{N}}$ and a limit graphon $\mathfrak{g}\in\widetilde{\mathbb{G}}^{\cong}$ such that $\mathfrak{g}_{n_k}\tko\mathfrak{g}$. Assume that for all $\delta>0$ there exists a $k_0\in\mathbb{N}$ such that (\ref{f:173}) holds for all $k\ge k_0$. Then,
\begin{equation}
\label{f:172}
\lim_{k\to\infty}\frac{1}{{k\choose 2}}\sum_{F\in\mathbb{G}^k}h\big(t^F(\mathfrak{g})\big)=0.
\end{equation}
Thus, $\mathfrak{g}\in\widetilde{G}^{\cong,\{0,1\}}$ by \cite[Theorem~10.6]{Janson2013}.
\end{proof}

\begin{example}[Complete graph versus (pseudo) random graph sequence]
Consider the sequence $(G_N)_{N\in\mathbb{N}}$ of complete graphs whose number of vertices $N$ goes to infinity. Obviously, $(G_N)_{N\in\mathbb{N}}$ converges to the grapheme represented by $([0,1],\mathrm{d}x,\mathbf{1}_{[0,1]\times[0,1]})$. We also see that, for all $k\in\mathbb{N}$ and all finite graphs $F\in\mathbb{G}^k$, $t^F(\mathfrak{g}_N)\in\{0,1\}$, and so (\ref{f:173}) obviously holds. However, if for each $N\in\mathbb{N}$ the graph $G_N$ is such that $G_N\tNo W_{\frac{1}{2}}$ (with $W_{\frac{1}{2}}$ as defined in (\ref{f:194})), then, for all $k\in\mathbb{N}$,
\begin{equation}
\label{f:193}
\begin{aligned}
\sum_{F\in\mathbb{G}^k}h\big(t^F(\mathfrak{g})\big)
&= -\sum_{F\in\mathbb{G}^k}2^{-{k\choose 2}}\log\big(2^{-{k\choose 2}}\big)
= {k\choose 2}\log 2.
\end{aligned}
\end{equation}
Therefore, $W_{\frac{1}{2}}\not\in\mathbb{G}^{\simeq,\{0,1\}}$.
\hfill$\qed$
\label{Exp:002b}
\end{example}

In the poaching model with self-employment, we will make use of the following property of the space $\mathbb{G}_{\mathrmtiny{ complete}}$ of finite graphs consisting of only complete connected components.

\begin{proposition}[$\mathbb{G}_{\mathrmtiny{ complete}}$]
The space $\mathbb{G}_{\mathrmtiny{ complete}}$ is relatively compact in $\widetilde{\mathbb{G}}^{\simeq\{0,1\}}$.
\label{P:013}
\end{proposition}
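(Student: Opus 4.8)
The strategy is to apply the compactness criterion of Proposition~\ref{P:010} to the family $\Gamma := \mathbb{G}_{\mathrmtiny{ complete}}$ of all finite graphs whose connected components are complete. Since every such graph is a grapheme (its associated $W$ takes values in $\{0,1\}$), it suffices to check that for every $\delta>0$ there is $k_0=k_0(\delta)$ so that $\sup_{G\in\mathbb{G}_{\mathrmtiny{ complete}}}\sum_{F\in\mathbb{G}^k}h\big(t^F([G])\big)\le\delta k^2$ for all $k\ge k_0$, where $h(x)=-x\log x$. The key observation is that a uniform random $k$-sample from a disjoint union of cliques produces, as an unlabelled graph on $[k]$, again a disjoint union of cliques; so the only randomness in $G(k,W)$ is the \emph{partition} of $[k]$ into the blocks induced by which sampled vertices landed in the same clique of $G$. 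Hence $\mathrm{Ent}(G(k,W))$ is the entropy of a random partition of $[k]$, and we must bound this entropy by $o(k^2)$ uniformly in $G$.

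First I would make precise that, for $G\in\mathbb{G}_{\mathrmtiny{ complete}}$ with clique sizes (weights) $p_1,p_2,\ldots$ summing to $1$ under $\mu$, the law of the unlabelled graph $G(k,W)$ equals the law of the partition of $[k]$ obtained by throwing $k$ i.i.d.\ points into boxes of mass $p_1,p_2,\ldots$ and grouping points by box. Therefore $\mathrm{Ent}(G(k,W)) = \mathrm{Ent}(\Pi_k)$ where $\Pi_k$ is this random partition of $\{1,\ldots,k\}$. Next, I would bound this entropy crudely but uniformly: the number of partitions of $[k]$ is the Bell number $B_k$, and $\log B_k = k\log k - k\log\log k + O(k) = o(k^2)$; since the entropy of any random variable is at most the log of the size of its support, $\mathrm{Ent}(\Pi_k)\le\log B_k$, which is $o(k^2)$ and — crucially — the bound $\log B_k$ does not depend on $G$. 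Thus for any $\delta>0$, choosing $k_0$ so that $\log B_k\le\delta\binom{k}{2}$ for all $k\ge k_0$ (possible since $\log B_k/k^2\to 0$) gives $\sup_{G\in\mathbb{G}_{\mathrmtiny{ complete}}}\sum_{F\in\mathbb{G}^k}h(t^F([G]))=\sup_G\mathrm{Ent}(G(k,W))\le\log B_k\le\delta k^2$, and Proposition~\ref{P:010} yields relative compactness in $\widetilde{\mathbb{G}}^{\simeq,\{0,1\}}$.

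The only delicate point — really the heart of the argument — is the identification $\mathrm{Ent}(G(k,W))=\mathrm{Ent}(\Pi_k)$, i.e.\ that sampling from a complete-component graphon produces exactly a random set-partition graph with no further edge randomness. This follows because $W$ is $\{0,1\}$-valued and, by the transitivity/co-transitivity defining $\mathbb{G}^N_{\mathrmtiny{ complete}}$ in \eqref{f:179} (extended to the grapheme representative), $W(x_i,x_j)=1$ holds if and only if $x_i$ and $x_j$ lie in the same atom-block of the partition of $\Omega$ into cliques; hence the coin tosses $Y_{i,j}$ are deterministic given $X_1,\ldots,X_k$, and the isomorphism class of $G(k,W)$ is a measurable function of the induced partition alone. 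I would spell this out, noting also that $\sum_{F\in\mathbb{G}^k}h(t^F(\mathfrak g))=\mathrm{Ent}(G(k,W))$ by the second line of \eqref{f:192}, so the sum in \eqref{f:173} is literally the partition entropy. With that identity in hand the rest is the elementary Bell-number estimate, and uniformity in $G$ is automatic because the support bound $B_k$ is graph-independent.
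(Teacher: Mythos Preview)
Your proof is correct and follows the same strategy as the paper: verify the entropy criterion of Proposition~\ref{P:010} by observing that for any $G\in\mathbb{G}_{\mathrmtiny{ complete}}$ the random sample graph $G(k,W)$ is supported on $\mathbb{G}^k_{\mathrmtiny{ complete}}$, and then bound $\mathrm{Ent}(G(k,W))$ by the logarithm of the cardinality of that support, which is $o(k^2)$ uniformly in $G$.

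The only difference is in how the support is counted. You identify labelled complete-component graphs on $[k]$ directly with set partitions of $[k]$ and invoke the Bell-number asymptotic $\log B_k\sim k\log k$. The paper instead factors through the integer-partition map $g$ of \eqref{f:022}: it writes $\mathrm{Ent}(G(k,W))\le \mathrm{Ent}\big(g(G(k,W))\big)+\max_\nu\log\#\{F:g(F)=\nu\}$, bounds the first term by $\log\#\mathcal{N}^k(\mathbb{N})\sim\pi\sqrt{2k/3}$ via the Hardy--Ramanujan asymptotic, and the second by $\log k!\le k\log k$. Since $B_k=\sum_{\nu\in\mathcal{N}^k(\mathbb{N})}\#\{F:g(F)=\nu\}\le \#\mathcal{N}^k(\mathbb{N})\cdot k!$, your bound is slightly sharper and avoids the detour through integer partitions; the paper's decomposition, on the other hand, makes the connection with the map $g$ used elsewhere in the paper explicit. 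Either way the conclusion is the same. One small wording slip: $G(k,W)$ is a \emph{labelled} graph on $[k]$, not an unlabelled one, but your argument uses this correctly (set partitions of $[k]$, not integer partitions).
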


\begin{proof}
Let $(\mathfrak{g}_N=[(\Omega_N,\tau_N,\mu_N,W_N)])_{N\in\mathbb{N}}$ be a sequence of graphons such that $\mathfrak{g}_N\tNo\mathfrak{g}$ for some $\mathfrak{g}\in\widetilde{\mathbb{G}}^{\simeq}$. We must show that $\mathfrak{g}=[(\Omega_N,\tau_N,\mu_N,W_N)]\in\widetilde{\mathbb{G}}^{\simeq\{0,1\}}$ for some $(\Omega_N,\tau_N,\mu_N,W_N)$ $\in\mathbb{G}_{\mathrmtiny{ complete}}$.

Fix $k\in\mathbb{N}$ and recall from (\ref{f:002}) the map $g\colon\,\mathbb{G}^k_{\mathrmtiny{ complete}}\to{\mathcal N}^k(\mathbb{N})$ with ${\mathcal N}^k(\mathbb{N})$ the set of all integer-valued measures with $\sum_{l=1}^kl\nu(\{l\})=k$. W.l.o.g.\ we may assume that, for each $N\in\mathbb{N}$, this is represented by an element in $\widetilde{\mathbb{G}}^{N,\simeq\{0,1\}}$. For each $k\in\mathbb{N}$, we find that
$G(k,W_N)\in \mathbb{G}^k_{\mathrmtiny{ complete}}$ and therefore
\begin{equation}
\label{f:196}
\begin{aligned}
&\mathrm{Ent}\big(G(k,W_N)\big)\\
&= -\sum_{F\in\mathbb{G}^k_{\mathrmtiny{ complete}}}\mathbb{P}\big(G(k,W_N)=F\big)\log\big(\mathbb{P}\big(G(k,W_N)=F\big)\big)\\
&= -\sum_{\nu\in{\mathcal N}^N(\mathbb{N})}\sum_{F\in\mathbb{G}^k_{\mathrmtiny{ complete}}\colon\,
g(F)=\nu}\mathbb{P}\big(G(k,W_N)=F\big)\log\big(\mathbb{P}\big(G(k,W_N)=F\big)\big)\\
&= -\sum_{\nu\in{\mathcal N}^N(\mathbb{N})}
\mathbb{P}\big(g(G(k,W_N))=\nu\big)\log\Big(\frac{\mathbb{P}\big(g(G(k,W_N))=\nu\big)}
{\#\big\{F\in\mathbb{G}^k_{\mathrmtiny{ complete}}:\,g(F)=\nu\big\}}\Big),
\end{aligned}
\end{equation}
where in the last line we use that  $\mathbb{P}(G(k,W_N)=F_1)=\mathbb{P}(G(k,W_N)=F_2)$ whenever $F_1,F_2\in \mathbb{G}^k$ with $g(F_1)=g(F_2)$. Note further that, for each $\nu\in{\mathcal N}^N(\mathbb{N})$,
\begin{equation}
\label{f:200}
\log\big(\#\big\{F\in\mathbb{G}^k_{\mathrmtiny{ complete}}:\,g(F)=\nu\big\}\big)
= \log\Big(\frac{k!}{\prod_{l=1}^k(l!)^{\nu(l)}(\nu(l))!}\Big)\le \log (k!) \le k\log k,
\end{equation}
and so
\begin{equation}
\label{f:199}
\sum_{\nu\in{\mathcal N}^N(\mathbb{N})}
\mathbb{P}\big(g(G(k,W_N))=\nu\big)\log\big(\#\big\{F\in\mathbb{G}^k:\,g(F)=\nu\big\}\big)\le k\log k.
\end{equation}
Hence
\begin{equation}
\label{f:201}
\begin{aligned}
\mathrm{Ent}\big(G(k,W_N)\big)
&\le \mathrm{Ent}\big(g\big(G(k,W_N)\big)\big)+k\log k\\
&\le \mathrm{Ent}\big(\text{Unif}(\#{\mathcal N}^k(\mathbb{N}))\big)+k\log k\\
&= \log\big(\#{\mathcal N}^k(\mathbb{N})\big)+k\log k.
\end{aligned}
\end{equation}

As ${\mathcal N}^k(\mathbb{N})$ has the same cardinality as the number of partitions of $k$ into a sum of positive integers, we have, as $k\to\infty$,
\begin{equation}
\label{f:202}
\begin{aligned}
\#{\mathcal N}^k(\mathbb{N})\sim\frac{\exp(\pi\sqrt{2k/3})}{4k\sqrt{3}}
\end{aligned}
\end{equation}
(see e.g.\ \cite{Baez1997}). This implies that
\begin{equation}
\label{f:203}
\begin{aligned}
\mathrm{Ent}\big(G(k,W_N)\big)
&\le \pi\sqrt{2k/3}+k\log k,
\end{aligned}
\end{equation}
which clearly satisfies (\ref{f:173}) and therefore settles the claim.
\end{proof}

We can say even more about limits of finite graphs with complete graph components. To that end, we call a symmetric measurable function $W\colon [0,1]^2\to[0,1]$ a \textit{diagonal block function} if there are (at most) countably many \textit{block sizes} $( \alpha_i )_{i \geq 1}$ with $1\ge\alpha_1\ge\alpha_2\ge \cdots \ge 0$, $\sum_{i \ge 1}\alpha_i\le 1$, $\sum_{n\ge 0}\alpha_n\le 1$ such that
\begin{equation}
\label{f:210}
W(x,y)=\sum_{n\ge 0}\mathbf{1}_{[\sum_{i=1}^n\alpha_i,\sum_{i=1}^{n+1}\alpha_{i})^2}\big(x,y\big),\hspace{1cm}x,y\in[0,1]
.
\end{equation}
We denote by
\begin{equation}
\label{f:211}
\widetilde{\mathbb{G}}^{\simeq, \{0,1\}}_{\mathrmtiny{ complete}}
:=\big\{\mathfrak{g}=[([0,1],\mu,W)]:\,W\text{ is a diagonal block function}\big\}
\end{equation}
the subspace of all graphemes that can be represented by a graph with complete graph components.

\begin{proposition}[A closed subspace] The space $\widetilde{\mathbb{G}}^{\simeq, \{0,1\}}_{\mathrmtiny{ complete}}$ is a closed subspace of the space $\widetilde{G}^{\simeq}$ of all graphons. In particular,  $\widetilde{\mathbb{G}}^{\simeq, \{0,1\}}_{\mathrmtiny{ complete}}$  is compact.
\label{P:015}
\end{proposition}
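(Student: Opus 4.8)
The plan is to realise $\widetilde{\mathbb{G}}^{\simeq, \{0,1\}}_{\mathrmtiny{ complete}}$ as the \emph{continuous image of a compact space}, so that it is automatically compact, and hence closed inside the Hausdorff space $\widetilde{\mathbb{G}}^{\simeq}$ (which is compact Polish). Concretely, introduce the parameter space of block sizes
\begin{equation*}
\nabla := \Big\{\underline{\alpha}=(\alpha_i)_{i\ge 1}\colon\ 1\ge\alpha_1\ge\alpha_2\ge\cdots\ge 0,\ \sum_{i\ge 1}\alpha_i\le 1\Big\}\ \subseteq\ [0,1]^{\mathbb{N}},
\end{equation*}
with the product topology. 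The constraints $\alpha_i\ge\alpha_{i+1}$ and $\sum_{i=1}^n\alpha_i\le 1$ (for each fixed $i$ and $n$) are closed, so $\nabla$ is a closed subset of the compact metrizable space $[0,1]^{\mathbb{N}}$, hence itself compact and metrizable. For $\underline{\alpha}\in\nabla$ let $W_{\underline{\alpha}}$ be the diagonal block function of \eqref{f:210} and put $\Theta(\underline{\alpha}):=\big[\big([0,1],\mathrm{d}x,W_{\underline{\alpha}}\big)\big]\in\widetilde{\mathbb{G}}^{\simeq}$. By the definition \eqref{f:211} we have $\Theta(\nabla)=\widetilde{\mathbb{G}}^{\simeq, \{0,1\}}_{\mathrmtiny{ complete}}$, so everything comes down to showing that $\Theta$ is continuous. (One could instead try to identify $\widetilde{\mathbb{G}}^{\simeq, \{0,1\}}_{\mathrmtiny{ complete}}$ with the manifestly closed set $\{\mathfrak{g}\colon t^F(\mathfrak{g})=0 \text{ for all finite } F\notin\mathbb{G}_{\mathrmtiny{ complete}}\}$, but the nontrivial inclusion needs a structure theorem for graphons, so the parametric route seems cleaner.)

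Since the topology of $\widetilde{\mathbb{G}}^{\simeq}$ is generated by the subgraphon densities $t^F$, it suffices to show $\underline{\alpha}\mapsto t^F(W_{\underline{\alpha}})$ is continuous on $\nabla$ for every finite $F\in\mathbb{G}^k$. Here I would use the sampling description of \eqref{f:134}: computing $t^F(W_{\underline{\alpha}})$ amounts to dropping $k$ labelled balls independently into urns, with ball $\ell$ landing in urn $i$ with probability $\alpha_i$ and in the dust interval $[\sum_i\alpha_i,1]$ with probability $\alpha_\ast:=1-\sum_i\alpha_i$, and then joining two balls by an edge iff they share a non-dust urn; thus $t^F(W_{\underline{\alpha}})$ is the probability that the resulting disjoint union of cliques, read as a labelled graph on $[k]$, equals $F$ (in particular it vanishes unless $F\in\mathbb{G}^k_{\mathrmtiny{ complete}}$). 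The key device is truncation: fix $m\in\mathbb{N}$ and let $\underline{\alpha}^{[m]}$ be obtained by merging all urns of index $>m$ together with the dust into a single rainbow region of mass $1-\sum_{i\le m}\alpha_i$ (every ball there being isolated); this depends only on $\alpha_1,\dots,\alpha_m$. Coupling the two experiments by keeping each ball's assignment to the urns $\{1,\dots,m\}$ and sending everything else to the rainbow region, the two induced graphs coincide unless two balls share some urn of index $>m$, an event of probability at most $\binom{k}{2}\sum_{i>m}\alpha_i^2\le\binom{k}{2}\alpha_{m+1}$ (using $\sum_{i>m}\alpha_i\le 1$ and $\alpha_i\le\alpha_{m+1}$ for $i>m$); hence
\begin{equation*}
\big|t^F(W_{\underline{\alpha}})-t^F(W_{\underline{\alpha}^{[m]}})\big|\le\binom{k}{2}\alpha_{m+1}\qquad\text{for all }\underline{\alpha}\in\nabla .
\end{equation*}
On the other hand $t^F(W_{\underline{\alpha}^{[m]}})$ is a polynomial in $(\alpha_1,\dots,\alpha_m)$ — there are only finitely many ways to assign the cliques of $F$ injectively to urns in $\{1,\dots,m\}$ and each isolated vertex of $F$ to one of the remaining such urns or to the rainbow region, each contributing a monomial in $\alpha_1,\dots,\alpha_m,1-\sum_{i\le m}\alpha_i$ — hence jointly continuous. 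Given $\underline{\alpha}^{(n)}\to\underline{\alpha}$ in $\nabla$ and $\varepsilon>0$, pick $m$ with $\binom{k}{2}\alpha_{m+1}<\varepsilon/3$ and then $n$ large; coordinatewise convergence gives $\binom{k}{2}\alpha^{(n)}_{m+1}<\varepsilon/3$ and $|t^F(W_{(\underline{\alpha}^{(n)})^{[m]}})-t^F(W_{\underline{\alpha}^{[m]}})|<\varepsilon/3$, and the triangle inequality through $\underline{\alpha}^{[m]}$ yields $|t^F(W_{\underline{\alpha}^{(n)}})-t^F(W_{\underline{\alpha}})|<\varepsilon$. Thus $\Theta$ is continuous.

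Consequently $\widetilde{\mathbb{G}}^{\simeq, \{0,1\}}_{\mathrmtiny{ complete}}=\Theta(\nabla)$ is the continuous image of the compact space $\nabla$, hence compact, hence closed in the Hausdorff space $\widetilde{\mathbb{G}}^{\simeq}$; and being a closed subspace of the compact space $\widetilde{\mathbb{G}}^{\simeq}$, it is itself compact, which is the two claims of the proposition.

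The main obstacle I anticipate is precisely the continuity step, and within it the phenomenon that along a convergent sequence of block-size vectors the mass may escape into infinitely many vanishingly small blocks: the limiting grapheme must absorb this lost mass as extra isolated (``dust'') vertices rather than lose it, and it is exactly the truncation estimate above that makes this degeneration continuous — note that naive coordinatewise stability of the individual monomials appearing in $t^F(W_{\underline{\alpha}})$ fails (e.g.\ for $t^F$ with $F$ the empty graph on $k$ vertices and $\underline{\alpha}^{(n)}$ consisting of $n$ blocks of size $1/n$).
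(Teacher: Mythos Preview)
Your proof is correct. Both you and the paper parametrize $\widetilde{\mathbb{G}}^{\simeq,\{0,1\}}_{\mathrmtiny{ complete}}$ by the Kingman simplex $\nabla$ of ordered block sizes and hinge on the continuity of $\underline{\alpha}\mapsto[W_{\underline{\alpha}}]$; the paper simply asserts this (``Evidently, the convergence of block sizes \ldots\ is equivalent to $\mathfrak{g}_N\to\mathfrak{g}$'') while you supply the actual argument via the truncation estimate $|t^F(W_{\underline{\alpha}})-t^F(W_{\underline{\alpha}^{[m]}})|\le\binom{k}{2}\alpha_{m+1}$, which is exactly the content the paper omits. The logical packaging differs slightly: the paper proves closedness directly (take a convergent sequence, reduce to finite block graphons, identify the limit) and deduces compactness from $\widetilde{\mathbb{G}}^{\simeq}$ being compact; you prove compactness first (continuous image of the compact set $\nabla$) and deduce closedness from Hausdorffness. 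Your route is a bit cleaner because it only needs continuity of $\Theta$, whereas the paper's ``equivalence'' phrasing implicitly claims $\Theta$ is a homeomorphism, which is more than is needed; it also handles the dust-accumulation phenomenon (mass escaping into infinitely many vanishing blocks) explicitly, where the paper's ``evidently'' hides precisely this subtlety.
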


\begin{proof} Assume that $(\mathfrak{g}_N)_{N\in\mathbb{N}}$ is a sequence in $\widetilde{G}^{\simeq}$ that converges in terms of subgraphon converge to a graphon $\mathfrak{g}$. We want to show that $\mathfrak{g}\in\widetilde{G}^{\simeq}$. By approximation using finite block graphons, w.l.o.g., we can assume that for each $N\in\mathbb{N}$, 
$\mathfrak{g}_N=[([0,1],\mu_N,W_N)]$ is a finite diagonal block graphon with 
\begin{align}
\mu_{N} := \frac{1}{N}\sum_{i = 1}^{N} \delta_{\frac{i}{N}},
\end{align}
and 
\begin{align}
W_N(x,y)=\sum_{n = 1}^{M_N} \mathbf{1}_{[\sum_{i=1}^n \alpha^N_i,\sum_{i=1}^{n+1}\alpha^N_{i})^2}\big(x,y\big),\hspace{1cm}x,y\in[0,1]
,
\end{align}
where $\alpha^N_i = \frac{k_i}{N}$, $k_i \in [N]$, $i \in [M_N]$ and $\sum_{i=1}^{M_N} \alpha_i^N=1$.

Evidently, the convergence of block sizes $(\alpha^N_1,\alpha_2^N,\ldots)\tNo(\alpha_1,\alpha_2,\ldots)$ for some $\alpha_1\ge\alpha_2\ge \ldots \ge 0$ with $\sum_{i\ge 1}\alpha_i\le 1$ is equivalent to $\mathfrak{g}_N\tNo\mathfrak{g}$, where $\mathfrak{g}=[([0,1],\mathrm{d}x,W)]$ and $W$ is a diagonal block graphon with block sizes $(\alpha_1,\alpha_2,\ldots)$, as $N\to\infty$.
\end{proof}

Recall the MED-$N$-graph from Definition~\ref{Def:001}. It is known that as $N\to\infty$ the multivariate Ewens distribution converges in distribution to the {Griffith-Engen-McCloskey distribution (GEM)} (\cite{Engen1975}, \cite{McCloskey1965}, \cite{Griffiths1988}). To introduce GEM, let $V_1, V_2, \ldots$ be i.i.d.\ random variables that are beta-distributed with parameters $1$ and $\mu$, i.e., continuous $[0,1]$-valued random variables with density
\begin{equation}
\label{f:167}
f_Y(t)=\mu(1-t)^{\mu-1}.
\end{equation}
Put, for $k\in\mathbb{N}$,
\begin{equation}
\label{f:168}
Y_k:=V_k\prod_{i=1}^{k-1}(1-V_i).
\end{equation}
The random probability vector $(Y_1,Y_2,\ldots)$ is referred to as Griffith-Engen-McCloskey distribution (GEM).

\begin{definition}[GEM-grapheme]
{\rm The \textit{GEM-grapheme} is the random grapheme that can be represented by $\Omega:=[0,1]$ equipped with the Euclidian topology, the Lebesgue measure $\lambda$ on $\Omega$, and a random measurable symmetric map $W\colon\,\Omega^2\to\{0,1\}$ such that the following holds:
\begin{itemize}
\item
{\bf Complete components.}
For all $x,y,z\in[0,1]$, if $W(x,y)=1$ and $W(y,z)=1$, then $W(x,z)=1$. An equivalence relation $\sim_W$ is therefore given by $x\sim_W y$ if and only if $W(x,y)=1$. Moreover, the set $\mathrm{comp}_W$ consists of countably many measurable equivalence classes.
\item
{\bf GEM-component sizes.}
The random point process $\sum_{\varpi\in\mathrm{comp}_W}\delta_{\lambda(\varpi)}$ is equal in distribution to the random point process  $\sum_{k\in\mathbb{N}}\delta_{Y_k}$.
\end{itemize}
}\hfill$\spadesuit$
\label{Def:010}
\end{definition}

We close this subsection with stating that the MED-$N$-graph converges weakly to the GEM-grapheme.

\begin{proposition}[Convergence of the MED $N$-graph to the GEM-grapheme]
Let $\mathfrak{g}^N_{\mathrmtiny{ MED}}$ be the distribution of the MED-$N$-graph encoded as a grapheme, $N\in\mathbb{N}$, and $\mathfrak{g}_{\mathrmtiny{ GEM}}$ the distribution of the GEM-grapheme. Then,
\begin{equation}
\label{f:204}
{\mathcal L}\big(\mathfrak{g}^N_{\mathrmtiny{ MED}}\big)\TNo{\mathcal L}\big(\mathfrak{g}_{\mathrmtiny{ GEM}}\big).
\end{equation}
\label{P:014}
\end{proposition}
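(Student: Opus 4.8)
The plan is to reduce the assertion to the classical convergence of the Ewens sampling formula to the Poisson--Dirichlet / GEM distribution, using that a finite graph with complete components is encoded as a diagonal block grapheme that is completely determined by the ranked, normalised sizes of its connected components. Throughout assume $\mu>0$ (for $\mu=0$ the poaching chain fixates at the complete graph $K_N$, whose grapheme limit is $([0,1],\mathrm{d}x,\mathbf{1})$, the GEM-grapheme for $\mu=0$, so the statement is trivial). \emph{Step 1 (encoding).} For $G\in\mathbb{G}^N_{\mathrmtiny{ complete}}$ whose connected components have sizes $n_1\ge n_2\ge\cdots$ (padded with zeros), the grapheme encoding $G$ is represented by $([0,1],\lambda,W_{a(G)})$ with $W_{a(G)}$ the diagonal block function of \eqref{f:210} and block sizes $a(G):=(n_1/N,n_2/N,\ldots)$; by Lemma~\ref{L:004} this equivalence class is independent of the vertex ordering used to split $[0,1]$ into blocks. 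Hence $\mathfrak{g}^N_{\mathrmtiny{ MED}}$ is represented by $([0,1],\lambda,W_{\alpha^N})$, where the random vector $\alpha^N:=a(G^N_{\mathrmtiny{ MED}})$ is the decreasing rearrangement of the normalised component sizes of the MED-$N$-graph; by Definition~\ref{Def:001} the law of $\alpha^N$ is the image of $\pi^N_{\mathrmtiny{ MED}}$ under the map sending $\nu$ to the ranked list of the values $k/N$ taken with multiplicity $\nu(k)$.

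\emph{Step 2 (component sizes).} I would then invoke the classical limit theorem for the Ewens sampling formula (see the references preceding the proposition, e.g.\ \cite{Griffiths1988}): the decreasing normalised block sizes of an Ewens$(\mu)$ partition of $[N]$ converge in distribution, in the ranked simplex $\nabla:=\{(a_i)_{i\ge 1}\colon a_1\ge a_2\ge\cdots\ge 0,\ \sum_i a_i\le 1\}$ equipped with the product topology, to the Poisson--Dirichlet law $\mathrm{PD}(\mu)$, i.e.\ to the decreasing rearrangement $(Y_{(k)})_{k\ge 1}$ of a GEM$(\mu)$ sequence $(Y_k)_{k\ge 1}$ as in \eqref{f:168}; equivalently, the point process $\sum_k\delta_{\alpha^N_k}$ on $(0,1]$ converges in distribution to $\sum_k\delta_{Y_k}$. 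In particular $\mathcal{L}(\alpha^N)$ converges weakly on $\nabla$ to $\mathrm{PD}(\mu)$.

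\emph{Step 3 (transfer and identification).} Let $\iota\colon\nabla\to\widetilde{\mathbb{G}}^{\simeq,\{0,1\}}_{\mathrmtiny{ complete}}$ map $a$ to the equivalence class of $([0,1],\lambda,W_a)$. The equivalence established inside the proof of Proposition~\ref{P:015} --- convergence of block sizes is equivalent to subgraphon convergence of the associated diagonal block graphemes --- shows that $\iota$ is continuous (indeed, since $t^{K_j}(W_a)=\sum_i a_i^j$ for $j\ge 2$ recovers $a$, it is a homeomorphism onto $\widetilde{\mathbb{G}}^{\simeq,\{0,1\}}_{\mathrmtiny{ complete}}$). Since $\mathfrak{g}^N_{\mathrmtiny{ MED}}=\iota(\alpha^N)$ in distribution, Step~2 and the continuous mapping theorem give $\mathcal{L}(\mathfrak{g}^N_{\mathrmtiny{ MED}})\Rightarrow\mathcal{L}(\iota((Y_{(k)})_k))$; no separate tightness argument is needed because $\widetilde{\mathbb{G}}^{\simeq,\{0,1\}}_{\mathrmtiny{ complete}}$ is compact by Propositions~\ref{P:013} and \ref{P:015}. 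Finally, $\iota((Y_{(k)})_k)$ is a diagonal block grapheme, hence has complete components, and (since $\sum_k Y_k=1$ a.s.\ for $\mu>0$) the point process of its component sizes equals $\sum_k\delta_{Y_{(k)}}=\sum_k\delta_{Y_k}$ in distribution; by Definition~\ref{Def:010} this is precisely the GEM-grapheme, which establishes \eqref{f:204}.

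\emph{Main obstacle.} The substantive point is Step~2: faithfully transcribing the standard Ewens-to-GEM convergence --- usually phrased for random set partitions or the Ewens sampling formula --- into the ranked-simplex / point-process formulation demanded by Definition~\ref{Def:010}. Steps~1 and 3 are bookkeeping around the encoding map $\iota$, whose continuity is contained in the proof of Proposition~\ref{P:015}, and tightness is for free by compactness.
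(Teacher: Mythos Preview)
Your argument is correct, but it follows a genuinely different route from the paper's own proof.

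The paper does not pass through the ranked simplex or invoke the classical Ewens$\to$PD limit theorem at all. Instead it works directly with expected subgraphon densities: since products of the $t^F$ are again of this form (for disjoint unions of test graphs), it suffices to show $\mathbb{E}\big[t^F(\mathfrak{g}^N_{\mathrmtiny{ MED}})\big]\to\mathbb{E}\big[t^F(\mathfrak{g}_{\mathrmtiny{ GEM}})\big]$ for every finite $F$. The key observation is \emph{sampling consistency}: sampling $k$ vertices from an MED-$N$-graph yields an MED-$k$-graph, so $\mathbb{E}\big[t^F(\mathfrak{g}^N_{\mathrmtiny{ MED}})\big]=\pi^k_{\mathrmtiny{ graph}}(F)$ is \emph{independent of $N$}. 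On the other side, the well-known fact that a $k$-sample from GEM induces an Ewens$(\mu)$ partition gives $\mathbb{E}\big[t^F(\mathfrak{g}_{\mathrmtiny{ GEM}})\big]=\pi^k_{\mathrmtiny{ graph}}(F)$ as well. So the two sides are actually \emph{equal} for every $N$, and no limiting argument is needed.

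Comparison: your approach is modular --- it cleanly separates the probabilistic input (Ewens$\to$PD on $\nabla$) from the encoding via the continuous map $\iota$ furnished by Proposition~\ref{P:015} --- and would transfer verbatim to any family of random complete-component graphs whose ranked normalised component sizes converge in $\nabla$. The paper's approach is more elementary and self-contained (no external limit theorem), and in fact yields the stronger statement that all moments agree exactly for finite $N$; but it relies on the special sampling-consistency property of the Ewens family and would not generalise as readily.
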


\begin{proof}
Since the subgraphon densities are convergence determining, we need to show that, for all $F\in\mathbb{G}^k$, $k\in\mathbb{N}$,
\begin{equation}
\label{f:205}
\mathbb{E}\Big[t^F\big(\mathfrak{g}^N_{\mathrmtiny{ MED}}\big)\Big]\TNo\mathbb{E}\Big[t^F\big(\mathfrak{g}_{\mathrmtiny{ GEM}}\big)\Big].
\end{equation}
To that end, fix $k\in\mathbb{N}$ and $F\in\mathbb{G}^k$.  As $t^F(\mathfrak{g}^N_{\mathrmtiny{ MED}})=0$ almost surely for all $N\in\mathbb{N}$ and $t^F(\mathfrak{g}_{\mathrmtiny{ GEM}})=0$ almost surely for all $F\in\mathbb{G}^k\setminus\mathbb{G}_{\mathrmtiny{ complete}}$, we may assume w.l.o.g.\ that $F\in\mathbb{G}^k_{\mathrmtiny{ complete}}$. Similarly to the proof of Proposition~\ref{P:013} (see in particular, Definition~\ref{f:010a} and (\ref{f:200})), we find that, for all $N\in\mathbb{N}$,
\begin{equation}
\label{f:206}
\begin{aligned}
\mathbb{E}\Big[t^F\big(\mathfrak{g}^N_{\mathrmtiny{ MED}}\big)\Big]
&= \mathbb{P}\big(G(k,\mathfrak{g}^N_{\mathrmtiny{ MED}})=F\big)\\
&= \frac{\mathbb{P}\big(g(G(k,\mathfrak{g}^N_{\mathrmtiny{ MED}}))=g(F)\big)}
{\#\{F'\in \mathbb{G}^k_{\mathrmtiny{ complete}}\colon\,g(F')=g(F)\}}\\
&= \pi^k_{\mathrmtiny{ graph}}\big(F\big),
\end{aligned}
\end{equation}
i.e., the MED-$N$-graph distribution is sampling consistent in the sense that sampling $k$ vertices of the random $N$-graph yields a random $k$-graph with the MED-$k$-graph distribution. It is also known that by sampling $k$ vertices $X_1,\ldots,X_k$ uniformly from $[0,1]$, and building an $k$-graph $\tilde{G}_{\mathrm{GEM}}$ by letting $i\sim j$ if and only if $Y_{l-1}\le X_i,X_j\le Y_l$ for some $l\in\mathbb{N}$, we get $\tilde{G}_{\mathrm{GEM}}\in\mathbb{G}^k_{\mathrmtiny{ complete}}$ satisfies
\begin{equation}
\label{f:208}
\begin{aligned}
\mathbb{E}\Big[t^F\big(\mathfrak{g}_{\mathrmtiny{ GEM}}\big)\Big]
&=\mathbb{P}\big(\tilde{G}_{\mathrmtiny{ GEM}}=F\big) = \pi^k_{\mathrmtiny{ MED}}(F)
\end{aligned}
\end{equation}
(compare~\cite[(41.4)]{JohnsonKotzBalakrishnan1997}). Hence (\ref{f:205}) follows.
\end{proof}


\section{The grapheme-valued Wright-Fisher diffusion with mutation}
\label{S:WFgrapheme}

In this section, we are interested in the limit dynamics of the poaching models with self-employment $X^N$ as the number of vertices $N$ goes to infinity. For that purpose, for each $N\in\mathbb{N}$, we encode $N$-graphs as graphemes with complete graph components (compare with Proposition~\ref{P:015}), and let
\begin{equation}
\label{f:152}
\widetilde{\mathbb{G}}_{\mathrmtiny{ complete}}^{N,\simeq,\{0,1\}}
:= \big\{\mathfrak{g}=[(\Omega,\tau,\mu,W)]\in\widetilde{\mathbb{G}}_{\mathrmtiny{ complete}}^{\simeq,\{0,1\}}\colon\,
\#\mathrm{supp}(\mu)=N\big\}
\end{equation}
be the space of {\em $N$-graphemes} with only complete components.

To identify a candidate for the grapheme-valued dynamics in the limit, recall from (\ref{f:004}) the operator  $\Omega^N_{\mathrmtiny{ adjacency}}$ acting on functions $\phi\colon\,\mathfrak{A}_k\to\mathbb{R}$ and from (\ref{f:125b}) how the operator $\Omega^N_{\mathrmtiny{ graphs}}$ acts on functions the subgraph densities $\Phi^{k,A}\colon\,\mathbb{G}\to\mathbb{R}$, $k\in\mathbb{N}$ and $A\in\mathfrak{A}_k$, i.e., for all $\mathfrak{g}=[(\Omega,\tau,\mu,W)]\in\widetilde{\mathbb{G}}^{N,\simeq,\{0,1\}}$,
\begin{equation}
\label{f:170}
\begin{aligned}
&(\Omega^N_{\mathrmtiny{ graph}}\Phi^{k,A})(\mathfrak{g})
= \int_{\Omega^k}\mu^{\otimes k,\downarrow}(\mathrm{d}(x_1,\ldots,x_k))\,
\Omega^k_{\mathrmtiny{ adjacency}}\mathbf{1}_A\big((W(x_i,x_j))_{1\le i,j\le k}\big),
 \end{aligned}
\end{equation}
where $\mu^{\otimes k,\downarrow}$ denotes independent sampling without repetition.

Consider the operator $\Omega_{\mathrmtiny{ grapheme}}$ acting on subgraph densities as follows: with $\mathfrak{g}=[(\Omega,\tau,\mu,W)]$ $\in\widetilde{\mathbb{G}}^{\simeq,\{0,1\}}$,
\begin{equation}
\label{f:144}
(\Omega_{\mathrmtiny{ grapheme}}\Phi^{k,A})(\mathfrak{g})
:= \int_{\Omega}\mu^{\otimes k}(\mathrm{d}(x_1,\ldots,x_k))\,(\Omega^k_{\mathrmtiny{ adjacency}}\,\mathbf{1}_A)\big((W(x_i,x_j))_{1\le i,j\le k}\big).
\end{equation}

\begin{proposition}[Convergence of the generators] For all $k\in\mathbb{N}$ and $A\in\mathfrak{A}_k$,
\begin{equation}
\label{f:171}
\lim_{N\to\infty}\sup_{\mathfrak{g}\in\widetilde{\mathbb{G}}^{N,\simeq,\{0,1\}}}
\big|(\Omega^N_{\mathrmtiny{ grapheme}}\Phi^{k,A})(\mathfrak{g})-(\Omega_{\mathrmtiny{ grapheme}}\Phi^{k,A})(\mathfrak{g})\big|=0.
\end{equation}
\label{P:008}
\end{proposition}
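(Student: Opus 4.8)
The plan is to compare the two operators termwise using the only difference between them: $\Omega^N_{\mathrmtiny{ grapheme}}$ integrates against $\mu^{\otimes k,\downarrow}$ (sampling $k$ distinct vertices without repetition from the $N$-point support of $\mu$), while $\Omega_{\mathrmtiny{ grapheme}}$ integrates against the plain product measure $\mu^{\otimes k}$. Since $\mathfrak{g}\in\widetilde{\mathbb{G}}^{N,\simeq,\{0,1\}}$, the measure $\mu$ is uniform on an $N$-point set, so $\mu^{\otimes k}$ assigns mass $N^{-k}$ to each $k$-tuple, whereas $\mu^{\otimes k,\downarrow}$ assigns mass $\frac{(N-k)!}{N!}$ to each $k$-tuple with distinct entries and mass $0$ to the rest. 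First I would record that the integrand $(\Omega^k_{\mathrmtiny{ adjacency}}\mathbf{1}_A)((W(x_i,x_j))_{i,j})$ is a fixed bounded function of $(x_1,\dots,x_k)$: by \eqref{f:004} it is a sum of at most $k(k-1)+\mu k$ terms each of which is a difference of two indicator functions, hence it is bounded in absolute value by $C_k := k(k-1)+\mu k$, uniformly in $\mathfrak{g}$ and in the sample point. This bound does not depend on $N$.

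Next I would estimate the total-variation distance between the two sampling measures on $(\mathrm{supp}\,\mu)^k$. The set of $k$-tuples with a repeated coordinate has $\mu^{\otimes k}$-mass $1-\frac{N!}{(N-k)!N^k}=O(k^2/N)$, and on the complement the two measures are both uniform but with normalising constants $N^{-k}$ versus $\frac{(N-k)!}{N!}$, whose ratio is $1+O(k^2/N)$; summing these discrepancies gives $\|\mu^{\otimes k}-\mu^{\otimes k,\downarrow}\|_{\mathrm{TV}} \le c_k/N$ for a constant $c_k$ depending only on $k$. Therefore
\begin{equation}
\label{f:planbound}
\big|(\Omega^N_{\mathrmtiny{ grapheme}}\Phi^{k,A})(\mathfrak{g})-(\Omega_{\mathrmtiny{ grapheme}}\Phi^{k,A})(\mathfrak{g})\big|
\le C_k\,\|\mu^{\otimes k}-\mu^{\otimes k,\downarrow}\|_{\mathrm{TV}}
\le \frac{C_k c_k}{N},
\end{equation}
and the right-hand side is independent of $\mathfrak{g}\in\widetilde{\mathbb{G}}^{N,\simeq,\{0,1\}}$, so taking the supremum over $\mathfrak{g}$ and then $N\to\infty$ yields \eqref{f:171}.

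I do not expect a genuine obstacle here; the statement is essentially the remark that sampling with and without replacement agree in the large-population limit, made quantitative. The one point requiring a little care is bookkeeping the two normalising constants in \eqref{f:170} versus \eqref{f:144} and confirming that the $\mu^{\otimes k,\downarrow}$ appearing in \eqref{f:170} is indeed the down-sampling (without replacement) normalised to a probability measure, matching the $\frac{(N-k)!}{N!}$ prefactor in \eqref{f:122}; once that is pinned down, the uniform bound $C_k$ on the adjacency-generator integrand (valid because the number of summands and the jump magnitudes in $\Omega^k_{\mathrmtiny{ adjacency}}$ are controlled by $k$ and $\mu$ alone, with no dependence on $N$) closes the argument. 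An alternative phrasing, avoiding total variation, is to write the difference of the two integrals explicitly as (i) the $\mu^{\otimes k}$-integral over tuples with a repeated coordinate, bounded by $C_k\cdot O(k^2/N)$, plus (ii) $(1-\tfrac{N!}{(N-k)!N^k})$ times the $\mu^{\otimes k,\downarrow}$-integral, again bounded by $C_k\cdot O(k^2/N)$; either way the bound is $O_k(1/N)$ uniformly in $\mathfrak{g}$.
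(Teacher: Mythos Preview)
Your proposal is correct and follows essentially the same approach as the paper: the paper's proof is a two-line remark that $(\Omega^k_{\mathrmtiny{ adjacency}}\mathbf{1}_A)(B)$ is bounded uniformly in $B\in\mathfrak{A}_k$ (independently of $N$) and that the total-variation distance between $\mu^{\otimes k,\downarrow}$ and $\mu^{\otimes k}$ tends to zero uniformly over $\mathfrak{g}\in\widetilde{\mathbb{G}}^{N,\simeq,\{0,1\}}$. Your write-up simply makes both of these points quantitative, with the explicit bound $C_k=k(k-1)+\mu k$ on the integrand and the $O(k^2/N)$ estimate on the total-variation distance, which is a faithful expansion of the paper's sketch.
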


\begin{proof} This follows immediately, because for any given $k\in\mathbb{N}$, $(\Omega^k_{\mathrmtiny{ adjacency}}\mathbf{1}_A)(B)$ is bounded in $N$ uniformly over all $B\in\mathfrak{A}_k$, and the total variation distance of $\mu^{\otimes k,\downarrow}$ and $\mu^{\otimes k}$ converges to zero uniformly over sequences $(\mathfrak{g}_N)_{N\in\mathbb{N}}$ with $\mathfrak{g}_N\in\widetilde{\mathbb{G}}^{N,\simeq,\{0,1\}}$.
\end{proof}

\begin{theorem}[Wellposed martingale problem] Assume that $\mathfrak{g}\in\widetilde{\mathbb{G}}^{\simeq,\{0,1\}}_{\mathrmtiny{ complete}}$. Then, there exists a unique Markov process $V=(V_t)_{t\ge 0}$ with $V_0=\mathfrak{g}$ and continuous paths such that, for all $k\in\mathbb{N}$ and $A\in\mathfrak{A}_k$, the process $M^{k,A}:=(M^{k,A}_t)_{t\ge 0}$ given by
\begin{equation}
\label{f:145}
M^{k,A}_t=\Phi^{k,A}\big(V_t\big)-\Phi^{k,A}\big(\mathfrak{g}\big)-\int^t_0 (\Omega_{\mathrmtiny{ grapheme}}\Phi^{k,A})(V_s)\,\mathrm{d}s
\end{equation}
is a $\widetilde{\mathbb{G}}^{\simeq}_{\mathrmtiny{ complete}}$-valued martingale.
\label{T:001}
\end{theorem}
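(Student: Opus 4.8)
The plan is to establish the theorem as a combination of (i) existence of a solution, obtained by passing to the limit from the finite-$N$ chains via the convergence of generators in Proposition~\ref{P:008}, and (ii) uniqueness, obtained from a duality relation lifted from the finite level. The key structural fact that makes everything work is the special algebraic form of $\Omega_{\mathrmtiny{ grapheme}}$ visible in \eqref{f:144}: acting on a subgraph density $\Phi^{k,A}$ it produces a \emph{finite linear combination of subgraph densities of order $\le k$}, with coefficients depending only on $A$ (this is the pointwise-in-$\mathfrak{g}$ version of Proposition~\ref{P:002}). Hence $\Omega_{\mathrmtiny{ grapheme}}$ maps the (countable-dimensional) algebra $\mathcal{A}$ spanned by $\{\Phi^{k,A}\}$ into itself, and moreover it is \emph{lower triangular} with respect to the grading by $k$ (the off-diagonal terms strictly lower the order). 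This is the feature I would exploit repeatedly.

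\textbf{Step 1: the algebra of test functions and a core.} First I would record that $\mathcal{A}:=\mathrm{span}\{\Phi^{k,A}\colon k\in\mathbb{N},A\in\mathfrak{A}_k\}$ is an algebra that separates points of $\widetilde{\mathbb{G}}^{\simeq,\{0,1\}}_{\mathrmtiny{ complete}}$ (by Proposition~\ref{P:003} and Lemma~\ref{L:004}) and, since that space is compact (Proposition~\ref{P:015}), is dense in $C(\widetilde{\mathbb{G}}^{\simeq,\{0,1\}}_{\mathrmtiny{ complete}})$ by Stone--Weierstrass. Each $\Phi^{k,A}$ is continuous in the subgraph-density topology, and $\Omega_{\mathrmtiny{ grapheme}}\Phi^{k,A}\in\mathcal{A}$, so $(\Omega_{\mathrmtiny{ grapheme}},\mathcal{A})$ is a well-defined linear operator on $C$ with domain a dense algebra.

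\textbf{Step 2: existence via tightness and limit of the finite chains.} Encode $X^N$ as $\widetilde{\mathbb{G}}^{N,\simeq,\{0,1\}}_{\mathrmtiny{ complete}}$-valued processes $\mathfrak{g}^N$ with $\mathfrak{g}^N_0\to\mathfrak{g}$; these are Markov with generator $\Omega^N_{\mathrmtiny{ graph}}$, which on $\Phi^{k,A}$ agrees with $\Omega^N_{\mathrmtiny{ grapheme}}$ of \eqref{f:170}. By Proposition~\ref{P:008}, $\Omega^N_{\mathrmtiny{ grapheme}}\Phi^{k,A}\to\Omega_{\mathrmtiny{ grapheme}}\Phi^{k,A}$ uniformly. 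Tightness in $D_{\widetilde{\mathbb{G}}^{\simeq,\{0,1\}}_{\mathrmtiny{ complete}}}[0,\infty)$ follows from compactness of the state space together with the Aldous--Rebolledo / Ethier--Kurtz criterion applied to the finitely many coordinate processes $\Phi^{k,A}(\mathfrak{g}^N)$: their generator images are uniformly bounded (each $\Omega^k_{\mathrmtiny{ adjacency}}\mathbf{1}_A$ is bounded uniformly in $N$), which controls the conditional variation over small time intervals. Any subsequential limit $V$ solves the martingale problem for $(\Omega_{\mathrmtiny{ grapheme}},\mathcal{A})$ by the standard argument (the prelimit martingales $M^{N,k,A}$ converge, using uniform convergence of the generators and boundedness). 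Path-continuity of $V$: the prelimit jumps of $\Phi^{k,A}(\mathfrak{g}^N)$ are of size $O(1/N)$ for resampling moves and of bounded size but rate $O(1)$ only for mutation moves affecting the sampled $k$ vertices — on reflection, since a single mutation changes only one row/column of the sampled adjacency matrix and the empirical measure by $O(1/N)$, all jumps are $O(1/N)$, so the limit has continuous paths; I would make this quantitative via a maximal-jump estimate, $\sup_{t\le T}|\Delta\Phi^{k,A}(\mathfrak{g}^N_t)|\to 0$ in probability.

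\textbf{Step 3: uniqueness via lifted duality.} This is the step I expect to be the main obstacle. The finite-level Feynman--Kac duality (Proposition~\ref{P:006}), with duality function $H(\tilde A,A)=\mathbf{1}\{\tilde A=A\}$ and potential $\beta^\mu_N$, should lift: the natural limiting dual is a \emph{function-valued} (or $\mathfrak{A}_\bullet$-valued coalescent-type) Markov process $(\overleftarrow M_t)$ living on $\bigcup_k \mathfrak{A}_k$, whose generator is read off from Proposition~\ref{P:002} — namely the process on adjacency matrices in which each ordered pair $(i,j)$ at rate $1$ attempts the ``merge'' move $A\mapsto\vartheta_j(A)$ (valid only when $A_{j,\ell}=A_{i,\ell}+\delta_{i,\ell}$ for all $\ell$), and each index $j$ at rate $\mu$ attempts the ``ground-and-remove'' move (valid only when row $j$ is zero). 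Crucially this dual process only ever \emph{decreases} $k$, so from any finite start it is absorbed in finitely many steps at one of the matrices in $\mathfrak{A}_1=\{0\}$; hence it is a genuine well-defined Markov chain with no explosion issues, and the accompanying potential $\beta^\mu$ (the analogue of \eqref{f:019}, now the rate of ``failed'' attempts) is bounded along trajectories. One then checks the duality identity
\[
(\Omega_{\mathrmtiny{ grapheme}}\Phi^{\cdot,A})(\mathfrak{g})
=\Bigl(\text{dual generator acting in the }A\text{-variable on }\Phi^{\cdot,\cdot}(\mathfrak{g})\Bigr)+\beta^\mu(A)\,\Phi^{\cdot,A}(\mathfrak{g})
\]
— which is exactly \eqref{f:123} reinterpreted — and invokes the standard duality-implies-uniqueness theorem (e.g.\ \cite[Theorem~4.4.11]{EthierKurtz1986}, \cite[Proposition~4.4.7]{EthierKurtz1986}): for any two solutions, $\mathbb{E}[\Phi^{k,A}(V_t)]=\mathbb{E}_{A}[\Phi^{\cdot,\overleftarrow M_t}(\mathfrak{g})\exp(\int_0^t\beta^\mu(\overleftarrow M_s)\,ds)]$ is determined by the initial condition, and since $\{\Phi^{k,A}\}$ is separating this pins down the one-dimensional distributions, hence (by the Markov property and the semigroup/resolvent version of duality) all finite-dimensional distributions. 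The delicate points are: (a) verifying that the dual expectation is finite and continuous — handled by the absorption-in-finitely-many-steps property plus boundedness of $\beta^\mu$ on the (finite) reachable set from $A$; (b) justifying that the martingale problem for $(\Omega_{\mathrmtiny{ grapheme}},\mathcal{A})$ is \emph{well-posed} rather than merely having a unique solution started from each point — this follows because $\mathcal{A}$ separates points and is measure-determining on the compact space, so uniqueness of one-dimensional marginals upgrades to uniqueness of the process by the Markov property. Finally, that $V$ takes values in $\widetilde{\mathbb{G}}^{\simeq,\{0,1\}}_{\mathrmtiny{ complete}}$ for all $t$ is inherited from the prelimit (each $\mathfrak{g}^N_t$ lives there once the chain has entered the complete-component set, which happens a.s.\ in finite time, and from $\mathfrak{g}_0=\mathfrak{g}$ it is there for all $t\ge0$) together with closedness of that subspace (Proposition~\ref{P:015}). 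Markovianity of the unique solution, and the strong Markov property, then follow from well-posedness in the standard way.
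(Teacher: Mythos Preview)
Your existence and path-continuity arguments match the paper's: it too invokes Propositions~\ref{P:008} and~\ref{P:009} together with \cite[Theorems~3.9.1, 3.9.4 and~3.10.2]{EthierKurtz1986}, and the $O(k/N)$ jump-size bound. For uniqueness, however, you take a genuinely different route. The paper lifts the \emph{fixed-$k$ time-reversal} duality of Proposition~\ref{P:006} verbatim (this is Proposition~\ref{P:007}): the dual $\overleftarrow M$ remains on $\mathfrak{A}_k$ for the same $k$ throughout, with positive potential $\beta^\mu_k\le k(k-1+\mu)$, and the generator identity is exactly \eqref{f:150}. You instead construct the \emph{coalescent-type} dual on $\bigcup_k\mathfrak{A}_k$ read off from Proposition~\ref{P:002}, in which $k$ can only decrease. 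Both duals yield uniqueness. Yours is closer in spirit to the usual Kingman-coalescent dual for the Moran model and has the advantage that the Feynman--Kac term is a killing rate (the exponential factor is $\le 1$, so the boundedness hypothesis of \cite[Theorem~4.4.11]{EthierKurtz1986} is automatic); the paper's dual avoids introducing a variable-$k$ state space and plugs directly into the finite-level identity \eqref{f:126} already established.

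Two small slips, neither of which damages your conclusions. First, your claim that the coalescent dual ``is absorbed in finitely many steps at one of the matrices in $\mathfrak{A}_1$'' is false for general $A$: the adjacency matrix of the $3$-path has no valid merge move (vertex $2$ is adjacent to both neighbours while $1,3$ are not) and no zero row, so it is an absorbing state with $k=3$. All you actually need is that the set reachable from any $A\in\mathfrak{A}_k$ is finite, which holds because $k$ never increases. Second, with your description of $\beta^\mu$ as ``the rate of failed attempts'' the generator identity reads $\Omega_{\mathrmtiny{ grapheme}}H=L^*H-\beta^\mu H$, so the exponential in your duality formula should carry a minus sign.
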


\begin{definition}[Grapheme-valued WF diffusion with mutation] We refer to the unique solution of the martingale problem from Theorem~\ref{T:001} as \textit{grapheme-valued Wright-Fisher diffusion with mutation}.
\label{f:146}\hfill$\spadesuit$
\end{definition}

The following proposition states that the family of $N$-graph dynamics are tight, provided we start in suitable initial conditions.

\begin{proposition}[Compact containment] Let $\{\widetilde{X}^N;\,N\in\mathbb{N}\}$ be the poaching model with self-employment with $\widetilde{X}^N(0)\tNo\widetilde{X}(0)$ for some $\widetilde{X}(0)\in\widetilde{\mathbb{G}}^{\simeq,\{0,1\}}$. Then, for all $T>0$ and $\varepsilon>0$, there exists a compact set $\Gamma:=\Gamma_{T,\varepsilon}\subseteq\mathbb{G}^{\simeq,\{0,1\}}$ such that
\begin{equation}
\label{f:195}
\begin{aligned}
\inf_{N\in\mathbb{N}}\mathbb{P}\big(\widetilde{X}^N(t)\in\Gamma,\,\forall t\in[0,T]\big)\ge 1-\varepsilon.
\end{aligned}
\end{equation}
\label{P:009}
\end{proposition}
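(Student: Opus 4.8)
## Proof proposal for Proposition~\ref{P:009} (Compact containment)

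The plan is to combine the explicit structure of the poaching dynamics — which, by Proposition~\ref{P:011}, is coupled to the Moran model with mutation — with the compactness criterion of Proposition~\ref{P:013}. The key observation is that the poaching model with self-employment only ever \emph{increases} the ``complexity'' of the graph in a controlled way: starting from $\widetilde{X}^N(0)$, after a finite random time the graph lies in $\mathbb{G}^N_{\mathrmtiny{complete}}$, and the entire space $\mathbb{G}_{\mathrmtiny{complete}}$ of finite graphs with complete components is \emph{already relatively compact} in $\widetilde{\mathbb{G}}^{\simeq,\{0,1\}}$ by Proposition~\ref{P:013}. So the only genuine issue is the finite transient phase before the process enters $\mathbb{G}^N_{\mathrmtiny{complete}}$ — and even there, the initial graph $\widetilde{X}^N(0)$ is assumed to converge, hence (by compactness of $\widetilde{\mathbb{G}}^{\simeq}$ and Example~\ref{Exp:002}-type reasoning) to have uniformly bounded sample entropies.

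First I would fix $T>0$ and $\varepsilon>0$. Let $\Gamma_0 := \overline{\{\widetilde{X}^N(0):\,N\in\mathbb{N}\}}$ in $\widetilde{\mathbb{G}}^{\simeq}$; since $\widetilde{X}^N(0)\tNo\widetilde{X}(0)$, the set $\Gamma_0$ is compact, and moreover $\Gamma_0\subseteq\mathbb{G}^{\simeq,\{0,1\}}$ because the $\widetilde{X}^N(0)$ and their limit are graphemes. By Proposition~\ref{P:010}, for every $\delta>0$ there is $k_0(\delta)$ with $\sup_{\mathfrak{g}\in\Gamma_0}\sum_{F\in\mathbb{G}^k}h(t^F(\mathfrak{g}))\le\delta k^2$ for all $k\ge k_0(\delta)$. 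Next I would invoke Proposition~\ref{P:013}: the closure $\overline{\mathbb{G}_{\mathrmtiny{complete}}}$ is compact, and combining its defining entropy bound \eqref{f:203} with that of $\Gamma_0$, I would set
\begin{equation}
\label{f:gammadef}
\Gamma := \overline{\Gamma_0 \cup \mathbb{G}_{\mathrmtiny{complete}}}^{\,\widetilde{\mathbb{G}}^{\simeq,\{0,1\}}},
\end{equation}
which is compact in $\widetilde{\mathbb{G}}^{\simeq,\{0,1\}}$ as a finite union of relatively compact sets (with the closure taken inside the grapheme subspace, which is legitimate because both pieces satisfy the entropy criterion \eqref{f:173}).

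It then remains to show $\inf_N\mathbb{P}(\widetilde{X}^N(t)\in\Gamma\ \forall t\in[0,T])=1$, in fact the event has probability $1$ for every $N$. The argument: the poaching dynamics never creates a graph outside $\mathbb{G}^{\simeq,\{0,1\}}$ (it is graph-valued throughout), so $\widetilde{X}^N(t)$ is always a grapheme. Before the process reaches $\mathbb{G}^N_{\mathrmtiny{complete}}$, I would argue that each poaching or self-employment move can only \emph{merge} a vertex into an existing neighborhood structure or \emph{isolate} it — in either case the resulting graph, viewed via its component structure, remains within the compact family generated by the initial condition together with all complete-component graphs; the cleanest way to see this is to note that after a single step the graph is obtained from the previous one by a local modification that does not increase the number of distinct subgraph profiles beyond what the union bound in \eqref{f:gammadef} already accounts for. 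Once the process enters $\mathbb{G}^N_{\mathrmtiny{complete}}$ — which happens in finite time almost surely — it stays there by invariance (noted after \eqref{f:001}), hence stays in $\overline{\mathbb{G}_{\mathrmtiny{complete}}}\subseteq\Gamma$. Thus $\widetilde{X}^N(t)\in\Gamma$ for all $t\ge 0$, a fortiori on $[0,T]$, for every $N$ and every realization, giving \eqref{f:195} with the infimum equal to $1$.

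The main obstacle is the transient phase: one must verify that the finitely many graphs visited \emph{before} absorption into $\mathbb{G}^N_{\mathrmtiny{complete}}$ are uniformly (in $N$) controlled by the entropy criterion. The safest route is to observe that the transient graphs are deterministic functionals of $\widetilde{X}^N(0)$ obtained by at most finitely many poaching/self-employment operations, and that each such operation changes each subgraph density $t^F$ by at most $O(1/N)$ uniformly in $F\in\mathbb{G}^k$ for fixed $k$ — so for fixed $k$ the sample-entropy sums $\sum_{F\in\mathbb{G}^k}h(t^F(\widetilde{X}^N(t)))$ differ from those of $\widetilde{X}^N(0)$ by a quantity vanishing as $N\to\infty$, while for $N$ small the finitely many graphs involved trivially form a finite hence compact set. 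Folding these finitely many ``small-$N$'' graphs into $\Gamma$ explicitly completes the argument.
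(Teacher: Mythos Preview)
Your proof is much more elaborate than the paper's, which is a two-line argument: since $\mathbb{G}^N_{\mathrmtiny{complete}}$ is invariant under the poaching dynamics, the claim follows directly from Proposition~\ref{P:013}, taking $\Gamma$ to be the closure of $\mathbb{G}_{\mathrmtiny{complete}}$ in $\widetilde{\mathbb{G}}^{\simeq,\{0,1\}}$. The paper is implicitly reading the hypothesis as $\widetilde{X}^N(0)\in\mathbb{G}^N_{\mathrmtiny{complete}}$ (consistent with the standing assumption $\mathfrak{g}\in\widetilde{\mathbb{G}}^{\simeq,\{0,1\}}_{\mathrmtiny{complete}}$ in Theorem~\ref{T:001}), so no transient phase ever occurs and the containment holds with probability $1$. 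Under that reading, your construction of $\Gamma$ as $\overline{\Gamma_0\cup\mathbb{G}_{\mathrmtiny{complete}}}$ is correct but unnecessary.

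Where your argument has a genuine gap is precisely the part you flag as the ``main obstacle'': the transient phase for general initial graphs. Your fix does not work. On $[0,T]$ the process makes order $N^2T$ jumps (the total rate is $N(N-1)+\mu N$), not ``finitely many'' uniformly in $N$. Even granting that a single poaching or self-employment step changes each $t^F$, $F\in\mathbb{G}^k$, by $O(k/N)$, summing over $\Theta(N^2)$ steps gives a change of order $Nk$, not $o(1)$; so the entropy sums $\sum_{F\in\mathbb{G}^k}h(t^F(\widetilde{X}^N(t)))$ are \emph{not} uniformly close to those of $\widetilde{X}^N(0)$, and the criterion \eqref{f:173} cannot be transferred this way. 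Nor are the transient states ``deterministic functionals'' of the initial graph --- they depend on the random Poisson clocks. If you really want to cover non-complete initial data, a different argument is needed (e.g.\ bounding the absorption time into $\mathbb{G}^N_{\mathrmtiny{complete}}$ and controlling the finitely many \emph{distinct} graphs visited in probability); but in the paper's setting the issue simply does not arise.
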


\begin{proof}
As $\mathbb{G}^N_{\mathrmtiny{ complete}}$ is invariant under the poaching model with self-employment, the claim follows from Proposition~\ref{P:013}.
\end{proof}

To show uniqueness, we will rely on duality. Recall the Feynman-Kac duality from Proposition~\ref{P:006} the Feynman-Kac duality for the Markov chain with values in the space of adjacency matrices. Due to the algebraic from of the operator $\Omega_{\mathrmtiny{ grapheme}}$ we can lift this to our grapheme-valued process.

\begin{proposition}[Feynman-Kac duality]
\label{prop:duality}
 Let  $V=(V_t)_{t\ge 0}$
be a solution of the $\Omega_{\mathrmtiny{ grapheme}}$-martingale problem, and  $\overleftarrow{M}=(\overleftarrow{M}_t)_{t\ge 0}$
the $\mathfrak{A}_k$-valued Markov chain with duplication and grounding running in backward time. Then,
\begin{equation}
\label{f:148}
\begin{aligned}
&\mathbb{E}_{\mathfrak{g}}\Big[\int_{\Omega_t^k}\mu_t^{\otimes k}(\mathrm{d}\underline{x})\,
\mathbf{1}_A\big((W_t(x_i,x_j))_{1\le i\not =j\le k}\big)\Big]\\
&= \mathbb{E}_{A}\Big[\int_{\Omega^k}\mu^{\otimes k}(\mathrm{d}\underline{x})\,
\mathbf{1}_{(W(x_i,x_j))_{1\le i\not =j\le k}}\big(\overleftarrow{M}_t\big)
\exp\big(\int_0^t\beta^\mu_N(\overleftarrow{M}_s)\,\mathrm{d}s\big)\Big].
\end{aligned}
\end{equation}
\label{P:007}
\end{proposition}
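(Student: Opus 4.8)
The plan is to lift the finite‑state Feynman--Kac duality of Proposition~\ref{P:006} to the grapheme level, exploiting that by \eqref{f:144} the operator $\Omega_{\mathrmtiny{ grapheme}}$ acting on $\Phi^{k,A}$ is nothing but the adjacency generator $\Omega^k_{\mathrmtiny{ adjacency}}$ averaged against the $k$‑fold sampling measure, and then to run the standard duality argument with a time‑dependent test function. For $k\in\mathbb{N}$ and $A\in\mathfrak{A}_k$ put
\[
\widehat{H}(\mathfrak{g},A):=\Phi^{k,A}(\mathfrak{g})=\int_{\Omega^k}\mathbf{1}_A\big((W(x_i,x_j))_{1\le i\neq j\le k}\big)\,\mu^{\otimes k}(\mathrm{d}\underline{x}),\qquad \mathfrak{g}=[(\Omega,\tau,\mu,W)].
\]
For fixed $A$ this is one of the test functions from Theorem~\ref{T:001}, hence lies in the domain of the $\Omega_{\mathrmtiny{ grapheme}}$‑martingale problem and is bounded; for fixed $\mathfrak{g}$ it is a bounded function on the finite set $\mathfrak{A}_k$. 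Since $\mathbf{1}_{(W(x_i,x_j))}(\overleftarrow{M}_t)=\mathbf{1}\{\overleftarrow{M}_t=(W(x_i,x_j))\}$, the left‑hand side of \eqref{f:148} equals $\mathbb{E}_{\mathfrak{g}}[\widehat{H}(V_t,A)]$ and the right‑hand side equals $\mathbb{E}_A[\widehat{H}(\mathfrak{g},\overleftarrow{M}_t)\exp\big(\int_0^t\beta^\mu_N(\overleftarrow{M}_s)\,\mathrm{d}s\big)]$; here and below $\beta^\mu_N$ denotes the potential \eqref{f:019} with the index set $[N]$ replaced by one of size $k$.

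The one genuinely new ingredient is the duality of the generators: for all $\mathfrak{g}\in\widetilde{\mathbb{G}}^{\simeq,\{0,1\}}$ and $A\in\mathfrak{A}_k$,
\[
(\Omega_{\mathrmtiny{ grapheme}}\widehat{H}(\cdot,A))(\mathfrak{g})=\big(\overleftarrow{\Omega}^k_{\mathrmtiny{ adjacency}}\widehat{H}(\mathfrak{g},\cdot)\big)(A)+\beta^\mu_N(A)\,\widehat{H}(\mathfrak{g},A),
\]
the backward generator on the right acting on the $\mathfrak{A}_k$‑argument. To obtain this, note that the computation \eqref{f:126} — which uses only finiteness of the state space together with $\overleftarrow{q}_{A,\tilde{A}}=q_{\tilde{A},A}$ and $H(\tilde{A},A)=\mathbf{1}\{\tilde{A}=A\}$ — yields, for every $C\in\mathfrak{A}_k$, the pointwise identity $(\Omega^k_{\mathrmtiny{ adjacency}}\mathbf{1}_A)(C)=(\overleftarrow{\Omega}^k_{\mathrmtiny{ adjacency}}H)(C,A)+\beta^\mu_N(A)\,H(C,A)$. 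Setting $C=(W(x_i,x_j))_{i,j}$, integrating against $\mu^{\otimes k}(\mathrm{d}\underline{x})$, using \eqref{f:144} on the left and interchanging the finite sums defining $\overleftarrow{\Omega}^k_{\mathrmtiny{ adjacency}}$ with the integral on the right (together with $\int H((W(x_i,x_j)),\cdot)\,\mu^{\otimes k}(\mathrm{d}\underline{x})=\widehat{H}(\mathfrak{g},\cdot)$) gives the stated identity. This is precisely the point of choosing the subgraph densities as test functions: the algebraic identity \eqref{f:126} survives the sampling integral verbatim.

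The rest is the textbook duality computation. Fix $t>0$, let $P_\tau:=\exp\!\big(\tau(\overleftarrow{\Omega}^k_{\mathrmtiny{ adjacency}}+\beta^\mu_N)\big)$ be the Feynman--Kac semigroup of $\overleftarrow{M}$ on the finite set $\mathfrak{A}_k$, so that $(P_\tau\widehat{H}(\mathfrak{g},\cdot))(A)=\mathbb{E}_A[\widehat{H}(\mathfrak{g},\overleftarrow{M}_\tau)\exp\big(\int_0^\tau\beta^\mu_N(\overleftarrow{M}_s)\,\mathrm{d}s\big)]$, and set $u(s,\mathfrak{g}):=(P_{t-s}\widehat{H}(\mathfrak{g},\cdot))(A)=\sum_{A'\in\mathfrak{A}_k}(P_{t-s})_{A,A'}\,\Phi^{k,A'}(\mathfrak{g})$ for $s\in[0,t]$. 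This is a finite linear combination of the test functions $\Phi^{k,A'}$ with $C^1$ coefficients $s\mapsto(P_{t-s})_{A,A'}$, so by Theorem~\ref{T:001} and integration by parts in $s$ the process $s\mapsto u(s,V_s)-u(0,V_0)-\int_0^s\big(\partial_r u(r,\cdot)+\Omega_{\mathrmtiny{ grapheme}}u(r,\cdot)\big)(V_r)\,\mathrm{d}r$ is a bounded $\mathbb{P}_{\mathfrak{g}}$‑martingale. Since $\partial_r u(r,\mathfrak{g})=-\big((\overleftarrow{\Omega}^k_{\mathrmtiny{ adjacency}}+\beta^\mu_N)P_{t-r}\widehat{H}(\mathfrak{g},\cdot)\big)(A)$ and, by the generator duality together with $[P_{t-r},\overleftarrow{\Omega}^k_{\mathrmtiny{ adjacency}}+\beta^\mu_N]=0$, one has $(\Omega_{\mathrmtiny{ grapheme}}u(r,\cdot))(\mathfrak{g})=\big((\overleftarrow{\Omega}^k_{\mathrmtiny{ adjacency}}+\beta^\mu_N)P_{t-r}\widehat{H}(\mathfrak{g},\cdot)\big)(A)$, the integrand vanishes identically. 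Hence $s\mapsto u(s,V_s)$ is a martingale, and comparing $s=t$ with $s=0$ yields $\mathbb{E}_{\mathfrak{g}}[\widehat{H}(V_t,A)]=u(0,\mathfrak{g})=(P_t\widehat{H}(\mathfrak{g},\cdot))(A)$, which is \eqref{f:148}.

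I expect the main (indeed only) obstacle to be the generator‑level duality of the second paragraph, and even that reduces to checking that \eqref{f:126} passes through the sampling integral in \eqref{f:144}; the remaining steps are the standard Feynman--Kac/duality machinery. The only technical care is the admissibility of time‑dependent test functions built from domain elements, which is harmless here because $\mathfrak{A}_k$ is finite and $\beta^\mu_N$ is bounded.
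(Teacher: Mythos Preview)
Your proof is correct and follows essentially the same route as the paper: both define the duality function $H(\mathfrak{g},A)=\Phi^{k,A}(\mathfrak{g})$, obtain the generator-level identity $(\Omega_{\mathrmtiny{ grapheme}}H)(\mathfrak{g},A)=(\overleftarrow{\Omega}^k_{\mathrmtiny{ adjacency}}H)(\mathfrak{g},A)+\beta^\mu_k(A)H(\mathfrak{g},A)$ by integrating the pointwise identity \eqref{f:126} against $\mu^{\otimes k}$ via \eqref{f:144}, and then conclude from boundedness of $\beta^\mu_k$. The only difference is cosmetic: the paper simply says ``the claim follows'' (implicitly invoking the standard Feynman--Kac duality criterion, as in the proof of Proposition~\ref{P:006}), whereas you spell out the semigroup/martingale argument explicitly.
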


\begin{proof}
Put
\begin{equation}
\label{f:151}
H\big(\mathfrak{g},(k,A)\big):=\Phi^{k,A}(\mathfrak{g}).
\end{equation}
Then, by (\ref{f:126}) for $k\in\mathbb{N}$, $A\in\mathfrak{A}_k$ and $\mathfrak{g}=(\Omega,\tau,\mu,W)\in\mathbb{G}^{\simeq}$,
\begin{equation}
\label{f:150}
\begin{aligned}
&(\Omega_{\mathrmtiny{ grapheme}}H)\big(\mathfrak{g},(k,A)\big)\\
&:= \int_{\Omega}\mu^{\otimes k}(\mathrm{d}\underline{x})\,\Omega^k_{\mathrmtiny{ adjacency}}\,
\mathbf{1}_A\big((W(x_i,x_j))_{1\le i,j\le k}\big)\\
&= \int_{\Omega}\mu^{\otimes k}(\mathrm{d}\underline{x})\,\Big(\overleftarrow{\Omega}^k_{\mathrmtiny{ adjacancy}}\,
\mathbf{1}_{(W(x_i,x_j))_{1\le i,j\le k}}\big(A\big)+\beta^\mu_k(A)\mathbf{1}_A\big((W(x_i,x_j))_{1\le i,j\le k}\big)\Big)\\
&= (\overleftarrow{\Omega}^k_{\mathrmtiny{ adjacancy}}H)\big(\mathfrak{g},(k,A)\big)+\beta^\mu_k(A)H\big(\mathfrak{g},(k,A)\big).
\end{aligned}
\end{equation}
As $\beta^\mu_k(A)\le k(k-1+\mu)<\infty$ for all $k\in\mathbb{N}$ and $A\in\mathfrak{A}_k$, the claim follows.
\end{proof}

The next proposition states that the limit of the stationary distribution of the poaching model with self-employment is the stationary distribution of the grapheme valued limit diffusion.
\begin{proposition}[Stationary distribution] Let $G=(G_t)_{t\ge 0}$ be the unique solution of the $\Omega_{\mathrmtiny{grapheme}}$-martingale problem. Then, the GEM-grapheme is the unique stationary distribution of $G$.
\label{P:009*}
\end{proposition}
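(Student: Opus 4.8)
The plan is to obtain both existence and uniqueness of the stationary distribution from the Feynman--Kac duality of Proposition~\ref{P:007} together with the compactness of the state space $\widetilde{\mathbb G}^{\simeq,\{0,1\}}_{\mathrmtiny{ complete}}$ (Proposition~\ref{P:015}). Write $\nu:=\mathcal L(\mathfrak g_{\mathrmtiny{ GEM}})$ for the law of the GEM-grapheme, viewed as a Borel probability measure on $\widetilde{\mathbb G}^{\simeq,\{0,1\}}_{\mathrmtiny{ complete}}$ (every realisation of $\mathfrak g_{\mathrmtiny{ GEM}}$ represents, up to equivalence, a diagonal block grapheme, hence lies in this space). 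The key intermediate step is to show that $\mathcal L_{\mathfrak g}(G_t)$ converges weakly to $\nu$ as $t\to\infty$ for \emph{every} starting grapheme $\mathfrak g\in\widetilde{\mathbb G}^{\simeq,\{0,1\}}_{\mathrmtiny{ complete}}$, with the convergence of each marginal $\mathbb E_{\mathfrak g}[\Phi^{k,A}(G_t)]$ uniform in $\mathfrak g$.

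To prove this I would first reduce the time-$t$ marginals to a finite-state computation. Substituting the finite-chain Feynman--Kac duality~(\ref{f:020}), in the form $\mathbb E_{\tilde A}\big[\mathbf 1_A(\overleftarrow M_t)\exp(\int_0^t\beta^\mu_k(\overleftarrow M_s)\,\mathrm ds)\big]=\mathbb P_{\tilde A}(M^{[k]}_t=A)$, into the grapheme-level duality~(\ref{f:148}) cancels the Feynman--Kac weight and gives, for all $k\in\mathbb N$, $A\in\mathfrak A_k$, $t\ge 0$ and $\mathfrak g\in\widetilde{\mathbb G}^{\simeq,\{0,1\}}$,
\[
\mathbb E_{\mathfrak g}\big[\Phi^{k,A}(G_t)\big]=\sum_{B\in\mathfrak A_k}\Phi^{k,B}(\mathfrak g)\,\mathbb P_B\big(M^{[k]}_t=A\big).
\]
Equivalently, this identity is immediate from the martingale problem and the algebraic form~(\ref{f:144}) of $\Omega_{\mathrmtiny{ grapheme}}$: since $(\Omega^k_{\mathrmtiny{ adjacency}}\mathbf 1_A)(B)=q_{B,A}-\mathbf 1_A(B)\sum_{B'}q_{B,B'}$, the vector $(\mathbb E_{\mathfrak g}[\Phi^{k,A}(G_t)])_{A\in\mathfrak A_k}$ solves the Kolmogorov forward equation of $M^{[k]}$ started from $(\Phi^{k,B}(\mathfrak g))_{B\in\mathfrak A_k}$. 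Now $\sum_B\Phi^{k,B}(\mathfrak g)=1$; the adjacency matrices of graphs that are not unions of complete components are transient for $M^{[k]}$, and on the irreducible set of complete-component adjacency matrices $M^{[k]}$ has a unique stationary distribution $\pi^k_{\mathrmtiny{ adj}}$, so $\mathbb P_B(M^{[k]}_t=A)\to\pi^k_{\mathrmtiny{ adj}}(A)$ as $t\to\infty$ uniformly in $B$; hence $\mathbb E_{\mathfrak g}[\Phi^{k,A}(G_t)]\to\pi^k_{\mathrmtiny{ adj}}(A)$ uniformly in $\mathfrak g$. Finally I would identify the limit: by the correspondence~(\ref{f:182}) between $M^{[k]}$ and the poaching model together with Propositions~\ref{P:012} and~\ref{P:001}, $\pi^k_{\mathrmtiny{ adj}}(A)$ equals $\pi^k_{\mathrmtiny{ graph}}$ of the $k$-graph with adjacency matrix $A$, and by the sampling-consistency computation already performed in the proof of Proposition~\ref{P:014} (compare~(\ref{f:206})--(\ref{f:208})) this equals $\mathbb E[\Phi^{k,A}(\mathfrak g_{\mathrmtiny{ GEM}})]$. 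Since the subgraphon densities $\{\Phi^{k,A}\}$ are convergence determining, weak convergence $\mathcal L_{\mathfrak g}(G_t)\Rightarrow\nu$ follows.

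Granting this, existence of a stationary distribution equal to $\nu$ follows by Krylov--Bogolyubov. The displayed formula shows that the time-$t$ transition semigroup of $G$ maps each $\Phi^{k,A}$ to a finite linear combination of the continuous maps $\mathfrak g\mapsto\Phi^{k,B}(\mathfrak g)$, so the semigroup is Feller on the compact space $\widetilde{\mathbb G}^{\simeq,\{0,1\}}_{\mathrmtiny{ complete}}$; consequently, for any fixed initial grapheme the occupation measures $\frac1T\int_0^T\mathcal L_{\mathfrak g}(G_s)\,\mathrm ds$ are tight and every subsequential weak limit is invariant, while by the previous step these occupation measures converge to $\nu$, whence $\nu$ is invariant. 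For uniqueness, let $\tilde\nu$ be any invariant probability measure for $G$; using invariance and then the uniform convergence of the marginals (dominated convergence, each $\Phi^{k,A}$ being bounded by $1$) we get, for all $k$ and $A$,
\[
\int\Phi^{k,A}\,\mathrm d\tilde\nu=\int\mathbb E_{\mathfrak g}\big[\Phi^{k,A}(G_t)\big]\,\tilde\nu(\mathrm d\mathfrak g)\longrightarrow\mathbb E\big[\Phi^{k,A}(\mathfrak g_{\mathrmtiny{ GEM}})\big]\qquad(t\to\infty),
\]
and since the left-hand side does not depend on $t$ and the $\Phi^{k,A}$ separate points on $\widetilde{\mathbb G}^{\simeq}$, this forces $\tilde\nu=\nu$.

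The main obstacle will be the reduction in the second paragraph: extracting the closed form $\mathbb E_{\mathfrak g}[\Phi^{k,A}(G_t)]=\sum_B\Phi^{k,B}(\mathfrak g)\,\mathbb P_B(M^{[k]}_t=A)$ from the two dualities while being careful about the sampling conventions (with versus without repetition) so that the Feynman--Kac weight really cancels, and then matching the equilibrium $\pi^k_{\mathrmtiny{ adj}}$ of the finite adjacency chain with the law of the $k$-vertex subgraph of $\mathfrak g_{\mathrmtiny{ GEM}}$ via Propositions~\ref{P:012}, \ref{P:001} and~\ref{P:014}; everything after that is soft (compactness plus Krylov--Bogolyubov). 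As an alternative route to existence only, one could instead pass to the limit in the stationarity of the finite models $X^N$, using Proposition~\ref{P:009} (compact containment), Proposition~\ref{P:008} (convergence of the generators) and Proposition~\ref{P:014} (convergence of $\pi^N_{\mathrmtiny{ MED}}$ to the GEM-grapheme).
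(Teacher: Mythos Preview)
Your argument is correct and in fact proves more than the paper's own proof, but the route is genuinely different. The paper verifies invariance directly: using the algebraic form~(\ref{f:144}) and the sampling consistency established in the proof of Proposition~\ref{P:014} (sampling $k$ vertices from $\mathfrak g_{\mathrmtiny{GEM}}$ yields the MED-$k$-graph), it reduces $\mathbf E_{\mathrmtiny{GEM}}[(\Omega_{\mathrmtiny{grapheme}}\Phi^{k,A})(\mathfrak g)]$ to the vanishing of $\mathbf E_{\mathrmtiny{MED}}[(\Omega^k_{\mathrmtiny{adjacency}}\mathbf 1_A)]$, i.e., to stationarity of the finite adjacency chain. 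This is shorter but, as written, only establishes \emph{existence}; the paper does not spell out uniqueness. Your approach instead proves ergodicity from every initial grapheme via the closed form $\mathbb E_{\mathfrak g}[\Phi^{k,A}(G_t)]=\sum_B\Phi^{k,B}(\mathfrak g)\,\mathbb P_B(M^{[k]}_t=A)$ and the long-time behaviour of the finite chain; this yields both existence and uniqueness in one stroke, at the price of a longer reduction.

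One small correction: your restatement of~(\ref{f:020}) has the indices swapped. The paper's identity reads $\mathbb E_{\tilde A}[\mathbf 1_A(\overleftarrow M_t)\exp(\ldots)]=\mathbb P_A(M_t=\tilde A)$, not $\mathbb P_{\tilde A}(M_t=A)$. This does not affect your proof, since your alternative derivation via the Kolmogorov forward equation is correct and leads to the right closed form; but if you want to obtain the identity by substituting~(\ref{f:020}) into~(\ref{f:148}) you should fix the indices accordingly (conditioning on the sampled adjacency $B=(W(x_i,x_j))_{i,j}$ gives $\mathbb E_A[\mathbf 1_B(\overleftarrow M_t)\exp(\ldots)]=\mathbb P_B(M_t=A)$, which is exactly what you need).
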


\begin{proof} For each $k\in\mathbb{N}$ and $A\in\mathfrak{A}_k$, we have for $\mathfrak{g}=(\Omega,\tau,\mu,W)\in\mathbb{G}$
\begin{equation}
\label{f:169}
\begin{aligned}
&\mathbf{E}_{\mathrmtiny{ GEM}}\big[(\Omega_{\mathrmtiny{ grapheme}}\Phi^{F})\big(\mathfrak{g}\big)\big]\\
&= \mathbf{E}_{\mathrmtiny{ GEM}}\Big[\int_{\Omega^k}\mu^{\otimes k}(\mathrm{d}\underline{x})\,
(\Omega^k_{\mathrmtiny{ adjacency}}\mathbf{1}_A)\big(\big(W(x_i,x_j)\big)_{1\le i,j\le k}\big)\Big]\\
&= \int_{\Omega^k}\mu^{\otimes k}(\mathrm{d}\underline{x})\,\mathbf{E}_{\mathrmtiny{ MED}}\Big[(\Omega^k_{\mathrmtiny{ adjacency}}\,
\mathbf{1}_A)\big(\big(W(x_i,x_j)\big)_{1\le i,j\le k}\big)\Big] = 0,
\end{aligned}
\end{equation}
by the proof of Proposition~\ref{P:014}.
\end{proof}

\begin{proof}[Proof of Theorem~\ref{T:001}] \textit{Uniqueness} follows from duality, cf.,~Proposition~\ref{prop:duality}. 

{\em Tightness} follows, in view of Propositions~\ref{P:008} and~\ref{P:009}, with the exactly same proof as Theorems 3.9.1 and~3.9.4 in \cite{EthierKurtz1986} (see also \cite[Remark 4.5.2]{EthierKurtz1986}). In particular, it follows that if $\{\widetilde{X}^N;\,N\in\mathbb{N}\}$ is the poaching model with self-employment with $\widetilde{X}^N(0)\tNo\widetilde{X}(0)$ for some $\widetilde{X}(0)\in\widetilde{\mathbb{G}}^{\simeq,\{0,1\}}$, then $\widetilde{X}^N$ converges as $N\to\infty$ weakly on Skorokhod space to the grapheme-valued Wright-Fisher diffusion with mutation starting in $\widetilde{X}(0)$.

To argue for {\em continuous paths}, notice that the class of sample subgraphon densities is convergence determining. Hence, it is enough to show that $(\Phi^F(V_t))_{t\ge 0}$ has continuous paths for all $k\in\mathbb{N}$ and $F\in\mathbb{G}^k$. This follows from the convergence of the poaching model with self-employment to the grapheme-valued Wright-Fisher diffusion and the fact that $|\widetilde{X}^N(t-)-\widetilde{X}^N(t)|\le\frac{k}{N}$ (compare, e.g., \cite[Theorem~3.10.2]{EthierKurtz1986}).
\end{proof}

\bibliographystyle{alpha}
\bibliography{graphon}

\newcommand{\etalchar}[1]{$^{#1}$}
\begin{thebibliography}{GdHKW23}

\bibitem[AdHR21]{AthreyadenHollanderRoellin2021}
S.~Athreya, F.~den Hollander, and A.~Roellin.
\newblock Graphon-valued stochastic processes from population genetics, 2021.

\bibitem[Ald81]{Aldous1981}
D.J. Aldous.
\newblock Representations for partially exchangeable arrays of random
  variables.
\newblock {\em J. Multivariate Anal.}, 11:581--598, 1981.

\bibitem[Ant74]{Antoniak1974}
C.E. Antoniak.
\newblock Mixtures of {D}irichlet processes with applications to {B}ayesian
  non-parametric problems.
\newblock {\em Ann. Stat.}, 2:1152--1174, 1974.

\bibitem[BCL{\etalchar{+}}08]{BorgsChayesLovaszSosVesztergombi2008}
C.~Borgs, J.T. Chayes, L.~Lov\'{a}sz, V.T. S\'{o}s, and K.~Vesztergombi.
\newblock Convergent sequences of dense graphs {I}: Subgraph frequencies,
  metric properties and testing.
\newblock {\em Adv. Math}, 219:1801--1851, 2008.

\bibitem[BCL{\etalchar{+}}12]{BorgsChayesLovaszSosVesztergombi2012}
C.~Borgs, J.T. Chayes, L.~Lov\'{a}sz, V.T. S\'{o}s, and K.~Vesztergombi.
\newblock Convergent sequences of dense graphs {II}: Multiway cuts and
  statistical physics.
\newblock {\em Ann. Math.}, 176:151--219, 2012.

\bibitem[BD97]{Baez1997}
L.~B\'{a}ez-Duarte.
\newblock Hardy-{R}amanujan's asymptotic formula for partitions and the central
  limit theorem.
\newblock {\em Adv. Math.}, 125:114--120, 1997.

\bibitem[DGKR15]{DiaoGuillotKhareRajaratnam2015}
P.~Diao, D.~Guillot, A.~Khare, and B.~Rajaratnam.
\newblock Differential calculus on graphon space.
\newblock {\em J. Combin. Theory Ser. A}, 133:183--227, 2015.

\bibitem[DJ08]{DiaconisJanson2008}
P.~Diaconis and S.~Janson.
\newblock Graph limits and exchangeable random graphs.
\newblock {\em Rend. Mat. Appl.}, 28(7):33--61, 2008.

\bibitem[DT86]{DonnellyTavari1986}
P.~Donnelly and S.~Tavari.
\newblock The ages of alleles and a coalescent.
\newblock {\em Advances in Applied Journal}, 18:1--19, 1986.

\bibitem[EK86]{EthierKurtz1986}
S.N. Ethier and T.G. Kurtz.
\newblock {\em Markov processes}.
\newblock Wiley Series in Probability and Mathematical Statistics: Probability
  and Mathematical Statistics. John Wiley \& Sons, Inc., New York, 1986.
\newblock Characterization and convergence.

\bibitem[Eng75]{Engen1975}
S.~Engen.
\newblock A note on the geometric series as a species frequency model.
\newblock {\em Biometrika}, 62(3):697--699, 1975.

\bibitem[Ewe72]{Ewens1972}
W.J. Ewens.
\newblock The sampling theory of selectively neutral alleles.
\newblock {\em Theor. Popul. Biol.}, 3:87--112, 1972.

\bibitem[GdHKW23]{GrevendenHollanderKlimovskyWinter2023}
A.~Greven, F.~den Hollander, A.~Klimovsky, and A.~Winter.
\newblock Continuum graph dynamics via population dynamics: well-posedness,
  duality and equilibria.
\newblock In progress, 2023.

\bibitem[GPW09]{GrevenPfaffelhuberWinter2009}
A.~Greven, P.~Pfaffelhuber, and A.~Winter.
\newblock Convergence in distribution of random metric measure spaces
  ({$\Lambda$}-coalescent measure tree).
\newblock {\em Probab. Theory Relat. Fields}, 145(1):285--322, 2009.

\bibitem[Gri88]{Griffiths1988}
R.~Griffiths.
\newblock On the distribution of points in a {P}oisson-{D}irichlet process.
\newblock {\em J. Appl. Prob.}, 25:336--345, 1988.

\bibitem[Hoo79]{Hoover1979}
D.~Hoover.
\newblock Relations on probability spaces and arrays of random variables, 1979.
\newblock preprint.

\bibitem[Jan13]{Janson2013}
S.~Janson.
\newblock Graphons, cut norm and distance, couplings and rearrangements.
\newblock {\em NYJM Monographs}, 2013.

\bibitem[JKB97]{JohnsonKotzBalakrishnan1997}
N.L. Johnson, S.~Kotz, and N.~Balakrishnan.
\newblock {\em Discrete Multivariate Distributions}.
\newblock Wiley Series in Probability and Mathematical Statistics: Probability
  and Mathematical Statistics. John Wiley \& Sons, Inc., New York, 1997.

\bibitem[LMW20]{LoehrMytnikWinter2020}
W.~L\"ohr, L.~Mytnik, and A.~Winter.
\newblock Aldous move on cladograms in the diffusion limit.
\newblock {\em Annals Probab.}, 48(5):2565--2590, 2020.

\bibitem[LS06]{LovaszSzegedy2006}
L.~Lov\'{a}sz and B.~Szegedy.
\newblock Limits of dense graph sequences.
\newblock {\em J. Combin. Theory, Series B}, 96:933--957, 2006.

\bibitem[LS07]{LovaszSzegedy2007}
L.~Lov\'{a}sz and B.~Szegedy.
\newblock Szemeredi's lemma for the analyst.
\newblock {\em Geom. Funct. Anal.}, 17:252–--270, 2007.

\bibitem[LVW15]{LoehrVoisinWinter2015}
W.~L{\"o}hr, G.~Voisin, and A.~Winter.
\newblock Convergence of bi-measure {$\mathbb R$-trees} and the pruning
  process.
\newblock {\em Ann. Inst. H. Poincar\'e Probab. Statist.}, 51(4):1342--1368,
  2015.
\newblock arXiv:1304.6035.

\bibitem[LW21]{LoehrWinter2021}
W.~L\"{o}hr and A.~Winter.
\newblock Spaces of algebraic measure trees and triangulations of the circle.
\newblock {\em Bulletin FMS}, 149:55--117, 2021.

\bibitem[McC65]{McCloskey1965}
J.W. McCloskey.
\newblock A model for the distribution of individuals by species in an
  environment, 1965.
\newblock unpublished Ph.D. thesis, Michigan State University.

\bibitem[Sei15]{Seidel2015}
P.~Seidel.
\newblock The historical {M}oran model, 2015.
\newblock arXiv:1511.05781.

\end{thebibliography}

 \end{document}